\newcolumntype{b}{X}
\newcolumntype{s}{>{\hsize=.5\hsize}X}
\theoremstyle{plain}
\newtheorem{lemma}{Lemma}[section]
\newtheorem{prop}[lemma]{Proposition}
\newtheorem{cor}[lemma]{Corollary}
\newtheorem{thm}[lemma]{Theorem}
\newtheorem{de}[lemma]{Definition}
\newtheorem{rem}[lemma]{Remark}
\newtheorem{problem}[lemma]{Problem}
\newtheorem*{thmn}{Theorem}
\newcommand{\Z}{\mathbb Z}
\newcommand{\N}{\mathbb N}
\begin{document}

\title[]{Combinatorial model for the cluster categories of type E}

\author[L. Lamberti]{Lisa Lamberti}
\address{Mathematical Institute \\
University of Oxford \\
Oxford\\
OX2 6GG\\
United Kingdom 
}
\email{Lisa.Lamberti@maths.ox.ac.uk}
\maketitle

\begin{abstract}
In this paper we give a geometric-combinatorial description 
of the cluster categories of type $E$. In particular, we
give an explicit geometric description of all cluster tilting objects in 
the cluster category of type $E_6$. 
The model we propose here arises from combining two polygons,
and it generalises the description of
the cluster category of type $A$ and $D$.
\end{abstract}

\section{Introduction}
Caldero-Chapoton-Schiffler defined in \cite{CCS} 
categories arising from homotopy classes of paths 
between two vertices of a regular $(n+3)$-sided polygon. 
Independently, Buan-Marsh-Reiten-Reineke-Todorov 
defined cluster categories 
as certain orbit categories of the 
bounded derived category of hereditary algebras, see \cite{BMRRT}. 
The latter are 
algebras arising from oriented graphs $Q$ with no oriented cycles. 
When $Q$ is an orientation of 
a Dynkin graph of type $A_n$, the category
constructed in \cite{CCS} coincides with the one of \cite{BMRRT}.
In this paper we model a number of different 
orbit categories arising from 
orientations of tree graphs $T_{r,s,t}$ in geometric terms. 
The description we
propose here is based on the idea of doubling 
the set of oriented diagonals in a given regular polygon 
and combine the dynamics of these two sets in an appropriate way.

Among the additive categories we can model in this way we find cluster categories 
of type $T_{r,s,t}$, as well as other triangulated categories of various Calabi-Yau dimension.
All these categories arise as mesh categories 
of a translation quiver whose vertices are
single coloured oriented and paired diagonals 
in a polygon $\Pi$. 
We will see that polygons of different sizes yield 
different categories and these are Calabi-Yau of dimension 
two only in few exceptional cases. The advantage of our approach is that we can
model the combinatorics of these various categories in terms
of configurations of diagonals in $\Pi$. 

For a tree diagram $T_{r,s,t}$ let $\mathcal{C}_{T_{r,s,t}}$ be
cluster category of type $T_{r,s,t}$ defined as the orbit
category $\mathcal{D}^b(\mathrm{ mod } k T_{r,s,t})/\tau^{-1}\Sigma$.
Then for $n\geq\max\{r+t+1,r+s+1\}$ and a regular $(n+3)$-gon $\Pi$, 
we construct  an additive category $\mathcal{C}^{n+3}_{r,s,t}$ of coloured
oriented single and paired diagonals of $\Pi$. 

With this notation our first main result can be stated as follows.
\begin{thmn}[\ref{eqofcat}]
We have the following equivalences of additive categories:
\begin{align*} \itemsep0em
&\mathcal{C}^7_{1,2,2}\rightarrow \mathcal{C}_{E_6} \\
&\mathcal{C}^{10}_{1,2,3}\rightarrow \mathcal{C}_{E_7}\\
&\mathcal{C}^{16}_{1,2,4}\rightarrow \mathcal{C}_{E_8}.
\end{align*}
\end{thmn}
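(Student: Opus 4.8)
The plan is to establish each of the three equivalences by exhibiting a common combinatorial skeleton, namely an isomorphism of translation quivers, and then invoking the fact that equivalences of mesh categories are induced by isomorphisms of the underlying stable translation quivers. Concretely, I would first recall that the cluster category $\mathcal{C}_{E_n}$ is the mesh category of the Auslander--Reiten quiver of $\mathcal{D}^b(\operatorname{mod} kE_n)/\tau^{-1}\Sigma$, which is $\mathbb{Z}E_n/\langle\tau^{-1}\Sigma\rangle$, a finite stable translation quiver whose vertex count is the number of indecomposable objects (for $E_6$, $22$; for $E_7$, $35$; for $E_8$, $60$). On the other side, by the construction of $\mathcal{C}^{n+3}_{r,s,t}$ recalled in the excerpt, this category is by definition the mesh category of a translation quiver $\Gamma^{n+3}_{r,s,t}$ whose vertices are the single-coloured oriented and paired diagonals of the regular $(n+3)$-gon $\Pi$, with translation given by the rotation of $\Pi$. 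So the theorem reduces to the purely combinatorial claim that $\Gamma^7_{1,2,2}\cong\mathbb{Z}E_6/\langle\tau^{-1}\Sigma\rangle$ and similarly for the other two cases.

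Next I would carry out the combinatorial identification in each case. The strategy is to produce an explicit bijection between diagonals (single and paired) of $\Pi$ and the vertices of $\mathbb{Z}E_n$ modulo the relevant automorphism, matching arrows and the translation. I expect this to be organised by first handling the ``single diagonal'' part, which by the type $A$ and $D$ results referenced in the introduction already models a subquiver isomorphic to (a piece of) $\mathbb{Z}A_{n}$ or $\mathbb{Z}D_{n}$ sitting inside $\mathbb{Z}E_n$; the genuinely new content is showing that adjoining the ``paired diagonals'' — the doubled set on which the two rotational dynamics are combined — supplies exactly the remaining vertices and arrows of the $E_n$ shape, including the trivalent branch vertex. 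I would verify the mesh relations are preserved, check that the induced $\tau$ on diagonals corresponds to $\tau^{-1}\Sigma$ under the identification (so the quotient is taken consistently on both sides), and confirm the vertex and arrow counts match as a sanity check. Because all three quivers are finite, once the isomorphism of translation quivers is in hand the equivalence of mesh categories — hence of the additive categories, which are the additive hulls of these mesh categories — follows formally.

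The main obstacle I anticipate is the branch-point analysis: verifying that the paired diagonals, with the prescribed rule for combining the two copies of the rotation, glue onto the single diagonals precisely along the arm of length $t$ (respectively $s$) so that the resulting stable translation quiver has the $T_{r,s,t}=E_n$ underlying tree and not some other shape. This is where the specific numerical choices $(r,s,t)=(1,2,2),(1,2,3),(1,2,4)$ and the polygon sizes $n+3=7,10,16$ enter essentially: one has to check that $n\geq\max\{r+t+1,r+s+1\}$ is not merely sufficient for the construction to make sense but that the \emph{smallest} admissible $n$ in each case produces a quiver with no extra $\tau$-orbits, i.e. that the mesh category is finite of the right size. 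I would treat $E_6$ in full detail — this also yields the explicit geometric description of all cluster tilting objects promised in the abstract — and then indicate how the same bookkeeping, with the arm length $t$ increased by one (and $n+3$ correspondingly enlarged to $10$ and then $16$), handles $E_7$ and $E_8$, the only delicate point being to re-verify the orbit count and the absence of unwanted identifications in the quotient for each enlarged polygon.
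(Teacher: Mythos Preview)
Your overall architecture is exactly that of the paper: reduce the equivalence of additive categories to an isomorphism of stable translation quivers, using that both sides are (additive hulls of) mesh categories. That part is fine.

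The gap is in how you propose to pin down the polygon sizes. You write that ``the \emph{smallest} admissible $n$ in each case produces a quiver with no extra $\tau$-orbits''. This is false for $E_7$ and $E_8$. For $(r,s,t)=(1,2,3)$ the inequality $n\geq\max\{r+t+1,r+s+1\}=5$ gives smallest polygon size $8$, not $10$; for $(1,2,4)$ it gives $9$, not $16$. The numbers $7,10,16$ are not minimal admissible values but are forced by the action of $\Sigma$ on $\mathbb{Z}E_n$: one has $\Sigma=\rho\tau^{-6}$ on $\mathbb{Z}E_6$, $\Sigma=\tau^{-9}$ on $\mathbb{Z}E_7$, and $\Sigma=\tau^{-15}$ on $\mathbb{Z}E_8$ (these are recorded in Section~\ref{action}, following Miyachi--Yekutieli). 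Hence $\tau^{-1}\Sigma$ equals $\rho\tau^{-7}$, $\tau^{-10}$, $\tau^{-16}$ respectively, and the quotient $\mathbb{Z}E_n/\langle\tau^{-1}\Sigma\rangle$ has exactly $(n+3)$ $\tau$-slices with $n+3=7,10,16$. The paper's proof simply matches this against the already-established isomorphism $\Gamma^{n+3}_{r,s,t}\cong\mathbb{Z}T_{r,s,t}/\tau^{-(n+3)}\rho^{(n+3)}$ from Lemma~\ref{propT} and Theorem~\ref{isoar}. Without this input your proposed bookkeeping (``re-verify the orbit count \dots\ for each enlarged polygon'') has no mechanism to select $10$ and $16$ rather than any other size.

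As a side remark, your sanity-check vertex counts are also off: the number of indecomposables in $\mathcal{C}_{E_6},\mathcal{C}_{E_7},\mathcal{C}_{E_8}$ is $42,70,128$ (equal to $(r+s+t+1)(n+3)=6\cdot 7,\ 7\cdot 10,\ 8\cdot 16$), not $22,35,60$.
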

When $T_{r,s,t}$ is not of Dynkin type we obtain an equivalence from 
the category $\mathcal{C}^\infty_{r,s,t}$ associated to the 
infinite sided-polygon $\Pi^\infty$ 
to the full subcategory of the cluster category $\mathcal{C}_{T_{r,s,t}}$ 
with indecomposable objects in the transjective component of the AR-quiver
of $\mathcal{C}_{T_{r,s,t}}$. 

These equivalences enable us to investigate
the combinatorics of the cluster category
in geometric terms. 
More precisely, we are able to
describe all 833 cluster tilting sets of the cluster category of type $E_6,$
$\mathcal{C}_{E_6}$, as cluster configuration of six single coloured 
oriented and paired diagonals
in a heptagon. The strategy will be to 
first determine two fundamental families of cluster configurations
$\mathcal{F}_1$ and $\mathcal{F}_2$ and deduce the
remaining cluster tilting sets
using the rotation inside $\Pi$, induced from the Auslander-Reiten 
translation $\tau$ in $\mathcal{C}_{E_6}$,
as well as a symmetry $\sigma$ of our model.
\begin{thmn}[\ref{count}] 
In a heptagon
\begin{itemize}
 \item 350 different cluster configurations have one long paired diagonal, and 
they arise from $\mathcal{F}_1$ through $\tau$.
 \item 483 other cluster configurations arise from  
 $\mathcal{F}_2$ through $\sigma$ and $\tau$.
\end{itemize}
\end{thmn}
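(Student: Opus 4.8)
The plan is to transport the enumeration to the geometric side. By the equivalence $\mathcal{C}^7_{1,2,2}\to \mathcal{C}_{E_6}$ of Theorem~\ref{eqofcat}, cluster tilting objects of $\mathcal{C}_{E_6}$ correspond bijectively to cluster configurations of six single coloured oriented and paired diagonals in the heptagon $\Pi$, and the Auslander--Reiten translation $\tau$ becomes rotation of $\Pi$ by one vertex, an operation of order $7$. Since the total number of cluster tilting objects of $\mathcal{C}_{E_6}$ is $833$ (the number of clusters in the cluster algebra of type $E_6$), it suffices to exhibit the two fundamental families, to check that the $\tau$- and $\sigma$-orbits they generate are pairwise disjoint with the claimed sizes $350$ and $483$, and to verify $350+483=833$ so that no configuration is omitted.

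First I would record the combinatorial dichotomy that organizes the argument: a cluster configuration of $\Pi$ either contains a long paired diagonal or it does not. One checks that rotation $\tau$ preserves this property (a long paired diagonal rotates to a long paired diagonal), so the $350$ configurations of the first bullet form a union of $\tau$-orbits, and it is enough to understand the representatives in $\mathcal{F}_1$. Concretely I would fix one long paired diagonal, enumerate by the compatibility rules of the model --- the crossing conditions on the single diagonals together with the paired-diagonal dynamics --- the collections of five further diagonals that complete it to a cluster configuration, and so pin down $\mathcal{F}_1$. Computing the $\tau$-orbit length of each element of $\mathcal{F}_1$, generically $7$ with any rotation-invariant configuration detected and accounted for by inspection, and summing over orbits while deduplicating the orbits coming from different choices of the distinguished long paired diagonal, should yield exactly $350$.

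For the second bullet I would run the analogous analysis on $\mathcal{F}_2$, the fundamental family of configurations with no long paired diagonal, now using both $\tau$ and the model symmetry $\sigma$. The relevant group is $\langle\sigma,\tau\rangle$ acting on $\Pi$, and the count $483$ should emerge as the sum of $\langle\sigma,\tau\rangle$-orbit sizes over $\mathcal{F}_2$, again through orbit--stabiliser bookkeeping: one identifies all configurations fixed by a nontrivial power of $\tau$ or by $\sigma$, and confirms the remaining orbits behave as expected. A final consistency check is that the $350$ configurations with a long paired diagonal and the $483$ without are genuinely disjoint, which is immediate from the dichotomy, and that $350+483=833$.

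The main obstacle I anticipate is establishing completeness and non-redundancy simultaneously: proving that $\mathcal{F}_1$ under $\tau$ together with $\mathcal{F}_2$ under $\langle\sigma,\tau\rangle$ really reaches every cluster configuration exactly once. This is where the orbit--stabiliser accounting must be airtight --- no configuration may be produced twice inside a single family, nor reached from both --- and where one leans on the explicit lists of representatives in $\mathcal{F}_1$ and $\mathcal{F}_2$ from the preceding sections, cross-checked against the total $833$, to certify that the enumeration is exhaustive. Determining the precise stabiliser of each representative (so that the $350$ and $483$ come out correctly rather than as mere upper or lower bounds) is the delicate, essentially case-by-case, part of the proof.
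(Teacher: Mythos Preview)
Your overall strategy matches the paper's: both proofs split on whether a long paired diagonal is present, build the count from the explicit families $\mathcal{F}_1$ and $\mathcal{F}_2$ of the preceding sections, propagate by $\tau$ (and $\sigma$ for the second family), and appeal to the known total $833$ to certify that nothing is missed.

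There is, however, a concrete error in your setup that would derail the orbit--stabiliser bookkeeping as you describe it. You assert that $\tau$ is ``rotation of $\Pi$ by one vertex, an operation of order $7$'', and that $\tau$-orbits are generically of length $7$. In the heptagon model for $T_{1,2,2}$ the polygon is odd-sided and the tree is symmetric, so by the definition in Section~\ref{rhotau} one has $\tau^7=\rho$ on the diagonals; in particular $\tau$ has order $14$ on single coloured diagonals and hence on a generic cluster configuration (this is the M\"obius phenomenon noted in Section~\ref{stabtrans}, and the paper records explicitly after Lemma~\ref{l23} that $\rho(\mathcal{T})$ lies in the $\tau$-orbit of $\mathcal{T}$). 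So your ``generic orbit length $7$'' is wrong, and any stabiliser computation built on it would miscount.

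The paper sidesteps this entirely by replacing orbit--stabiliser with a direct product count. For $\mathcal{F}_1$ it uses Lemma~\ref{cor}: a fixed long paired diagonal $L_P$ cuts $\Pi$ into a quadrilateral $\Pi_4$ and a pentagon $\Pi_5$, and a cluster configuration containing $L_P$ is exactly a choice of short paired diagonal in $\Pi_4$ (two options) together with independent single-colour triangulations of the two copies of $\Pi_5$ ($5\times 5$ options), giving $50$ completions; since each such configuration contains exactly one long paired diagonal, the seven positions of $L_P$ contribute $7\times 50=350$ with no overcount. For $\mathcal{F}_2$ the paper does a case analysis on the number of short paired diagonals present (one, two, or none), reading off from Figure~\ref{tilt}(a)--(n) together with Lemma~\ref{l1} and the $\sigma$-symmetry how many configurations each template yields under $\tau$, obtaining $224+175+84=483$. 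No stabiliser computation is needed because the parametrisation is already one-to-one; your proposed route would get there too, but only after repairing the order of $\tau$ and then discovering that the direct count is simpler.
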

This classification allows us to deduce the following result.
\begin{thmn}[\ref{confthm}]
All 833 cluster tilting sets of $\mathcal{C}_{E_6}$ can be expressed as
configurations of six non-crossing coloured oriented single and paired diagonals 
inside two heptagons.
\end{thmn}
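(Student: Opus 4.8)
The plan is to obtain Theorem~\ref{confthm} by transporting the classification of Theorem~\ref{count} across the equivalence of Theorem~\ref{eqofcat}. First I would note that any equivalence of additive categories sends cluster tilting objects to cluster tilting objects, since being cluster tilting is a categorical property ($\operatorname{Ext}^1$-orthogonality together with the correct number of pairwise non-isomorphic indecomposable summands). Hence the equivalence $\mathcal{C}^7_{1,2,2}\to\mathcal{C}_{E_6}$ of Theorem~\ref{eqofcat} restricts to a bijection between the cluster tilting sets of $\mathcal{C}^7_{1,2,2}$ and those of $\mathcal{C}_{E_6}$. Because $E_6$ has rank $6$, each such set has exactly six elements; and because the indecomposable objects of $\mathcal{C}^7_{1,2,2}$ are, by construction of the model, precisely the single coloured oriented and the paired diagonals in the doubled heptagon, every cluster tilting set of $\mathcal{C}_{E_6}$ is thereby realised as a collection of six such diagonals.

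Next I would invoke the combinatorial description of $\operatorname{Ext}^1$ in $\mathcal{C}^7_{1,2,2}$: two indecomposables are compatible if and only if the associated single or paired diagonals do not cross, in the colour- and orientation-refined sense proper to the doubled polygon. Consequently a cluster tilting set corresponds to a maximal set of pairwise non-crossing coloured oriented single and paired diagonals, and maximality again forces the cardinality to be six. This yields the qualitative content of the statement; what remains is to identify the resulting configurations with the $833$ listed ones.

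For the enumerative part I would use that, by the finite-type theory of cluster algebras, the number of clusters of type $E_6$, equivalently the number of cluster tilting sets of $\mathcal{C}_{E_6}$, equals $833$. Theorem~\ref{count} produces exactly $833$ configurations of diagonals: the $350$ containing a long paired diagonal, generated from the fundamental family $\mathcal{F}_1$ by the rotation $\tau$, and the $483$ without a long paired diagonal, generated from $\mathcal{F}_2$ by $\sigma$ and $\tau$. I would check that the two lists are disjoint, the dividing line being the presence of a long paired diagonal, and that each is irredundant, i.e.\ the $\tau$- and $\sigma$-orbits have exactly the sizes asserted in Theorem~\ref{count}; this gives $350+483=833$ pairwise distinct non-crossing configurations, each of which is genuinely cluster tilting by the compatibility criterion above. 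Comparing with the total $833$ shows the list is exhaustive, which is the assertion of Theorem~\ref{confthm}.

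The main obstacle is not this final bookkeeping but the inputs it relies on. The delicate work, belonging to Theorem~\ref{count}, is to pin down $\mathcal{F}_1$ and $\mathcal{F}_2$, to show that their orbits under $\langle\tau,\sigma\rangle$ neither overlap nor collapse, and to keep the two copies of the heptagon consistent throughout — in particular to control how $\sigma$ interacts with the single/paired dichotomy. Granting Theorem~\ref{count}, the proof of Theorem~\ref{confthm} is essentially a matter of assembly: transport along the equivalence of Theorem~\ref{eqofcat}, observe that compatibility means non-crossing, and match the count against $833$.
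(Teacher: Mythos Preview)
Your proposal contains a genuine gap at the step where you ``invoke the combinatorial description of $\operatorname{Ext}^1$ in $\mathcal{C}^7_{1,2,2}$: two indecomposables are compatible if and only if the associated single or paired diagonals do not cross''. No such biconditional holds in this model. The paper's Proposition~\ref{cuts} only gives one direction, and only under hypotheses (crossing implies $\dim\operatorname{Ext}^1=1$ when $D_X$ is paired, or when $D_X$ is single and $D_Y$ enters the smaller region); the full description of $\operatorname{Ext}^1$ requires the machinery of curves in Section~\ref{curves} and Proposition~\ref{int}, where extensions can have dimension up to~$3$ and are not detected by naive crossing. More decisively, the paper remarks immediately after Theorem~\ref{confthm} that the converse fails: there exist configurations of six non-crossing coloured oriented diagonals that are \emph{not} cluster tilting sets. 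So ``non-crossing'' is strictly weaker than ``$\operatorname{Ext}$-orthogonal'', and your proposed characterisation collapses.

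The paper's proof avoids this trap by treating Theorem~\ref{confthm} as a one-directional statement: it does not try to characterise cluster tilting sets via crossings, but simply verifies that each of the $833$ cluster configurations already produced in Theorem~\ref{count} happens to be crossing-free once the diagonals are separated by colour into two heptagons (paired diagonals going into both). Concretely, one checks by inspection that the configurations in $\mathcal{F}_1$ and $\mathcal{F}_2$ are crossing-free, and then observes that $\sigma$ (a reflection) and $\tau$ (a rotation, possibly with colour/orientation swap) preserve the non-crossing property. Your assembly via Theorem~\ref{count} is the right scaffolding, but the content is this direct geometric check on the fundamental families, not an appeal to a non-existent ``non-crossing $\Leftrightarrow$ compatible'' criterion.
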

The previous results enable us to
deduce geometrical moves describing the mutation process between 
cluster tilting objects in $\mathcal{C}_{E_6}$.
The geometrical moves we find extend 
the mutation process of cluster categories of type $A_n$,
as described by Caldero-Chapoton-Schiffler in \cite{CCS}, to the setting of 
coloured oriented diagonals. 
We will see that in many cases
mutations inside $\mathcal{C}_{E_6}$ correspond 
to flips of coloured oriented diagonals in $\Pi$. 
In the final part of the paper we use the previous results to 
categorify geometrically cluster algebras of type $F_4$. We also provide a 
geometric description of the mutation rule.

\textbf{Relation to previous work:}
Geometrical models for cluster categories of other types have been investigated also in
 \cite{S,BZ,to}.  Moreover, with an appropriate paring of coloured oriented single and paired diagonals in an even sided polygon 
we recover the categorified geometric realisation of cluster algebras of type $D$ given by Fomin-Zelevinsky in \cite{FZ_y}.

The number of clusters in a cluster algebra of finite type
was first computed in \cite{FZ_y}. Under the bijection
of \cite{BMRRT} one deduces the number of cluster tilting sets
in the corresponding cluster categories. 
An explicit complete description of the
cluster-tilting objects in the cluster category of type $E_6$ and $F_4$ however is new.

In addition,  Fomin-Pylyavskyy used in \cite{FP}
polygons to describe the cluster algebra structure in certain rings of 
$\mathrm{SL}(V)$-invariants. More precisely, they 
construct invariants determined by tensor graphs
associated to diagonals of polygons. Using
a heptagon, Fomin-Pylyavskyy also model the cluster algebra structure
of the homogeneous coordinate ring $\mathbb C[Gr_{3,7}]$
of the affine cone over the Grassmannian $Gr_{3,7}$ of three 
dimensional subspaces in a seven dimensional 
complex vector space. By a result of Scott, \cite{Scott}, 
it is known that the ring $\mathbb C[Gr_{3,7}]$ 
is a cluster algebra type of $E_6$. 
The approach of \cite{FP} however is different 
then the one we propose here,
as it relies on relations satisfied by tensor graphs,
called skein relations of tensor graphs.

\textbf{Organisation of the article:}
In Section 2, we state some preliminary results and definitions. In particular,
we state the fundamental properties of orbit categories and we remind 
the action of the shift functor on the Auslander-Reiten quiver
of $\mathcal{D}^b(\mathrm{mod}k Q)$, for $Q$ an orientation of a simply laced
Dynkin diagram.

In Section 3, we construct the additive category $\mathcal{C}^{n+3}_{r,s,t}$  
associated to a regular (n+3)-gon $\Pi$, where $n=\max\{r+t+1,r+s+1\}$. 
The objects of $\mathcal{C}^{n+3}_{r,s,t}$ will be 
single coloured oriented and paired diagonals of $\Pi$. The morphism spaces 
are generated by minimal rotations in $\Pi$, modulo 
certain equivalence relations. 

In Section 4 we prove the equivalences of additive categories 
stated in Theorem \ref{eqofcat}.
Further equivalence of additive categories will also be discussed. 
In Proposition \ref{isoinfty} we show that there
is an equivalence between the category $\mathcal{C}^\infty_{r,s,t}$ 
associated to an infinite sided polygon and the full subcategory of the 
cluster category $\mathcal{C}_{T_{r,s,t}}$, whose 
indecomposable objects belong to 
the transjective component of the AR-quiver
of $\mathcal{C}_{T_{r,s,t}}$.

In Section 5, we describe the combinatorics of $\mathcal{C}_{E_6}$ geometrically 
inside a heptagon $\Pi$. 
In Theorem \ref{count} and Theorem 
\ref{confthm} we describe all cluster tilting sets of $\mathcal{C}_{E_6}$ in terms of 
cluster configurations of coloured oriented
diagonals of $\Pi$. In Proposition \ref{int} we also describe all $\mathrm{Ext}$-spaces 
of $\mathcal{C}_{E_6}$ using curves 
of coloured oriented diagonals of $\Pi$. Finally, results concerning the mutation
process of cluster configurations will be stated, see Proposition
\ref{378}. 

In Section 6 we point out further applications of our work. In particular
we deduce a geometric additive categorification of cluster algebras of type $F_4$.
Moreover, we describe how our construction can
be used to understand cluster tilting sets inside the cluster
categories of type $E_7$ and $E_8$, as well as 
cluster tilting sets in the transjective component of the 
AR-quiver of cluster categories associated to
more general tree diagrams $T_{r,s,t}$.

\section{Preliminaries}
Let $k$ be an algebraically closed field and let $Q$ be an acyclic quiver.
Let $\mathrm{mod}kQ$ be the abelian category
of $k$-finite dimensional right-modules over
the path algebra $kQ$. 
Let $\mathcal{D}:=\mathcal{D}_Q:=\mathcal{D}^b(\mathrm{mod} k Q)$ be the bounded derived category of
$\mathrm{mod}kQ$ endowed with the shift functor
$\Sigma:\mathcal{D}\rightarrow\mathcal{D}$ and the
{\em Auslander-Reiten translation} $\tau:\mathcal{D}\rightarrow \mathcal{D}$
characterised by $\mathrm{Hom}_{\mathcal{D}}(X,-)^*\cong\mathrm{Hom}_{\mathcal{D}}(-,\Sigma\tau X),$ for all $X\in\mathcal{D}$.

\subsection{Orbit categories of $\mathcal{D}$}\label{def_orbitcat}
We are interested in the orbit categories $\mathcal{C}_Q^p$
of $\mathcal{D}$, $p\in\mathbb N$, generated by 
the action of cyclic group generated by the auto-equivalences 
$F^p:=(\tau^{-1}\Sigma)^p=\tau^{-p}\Sigma^p$. 
The objects of $\mathcal{C}_Q^p$ are the same as the objects of $\mathcal{D}$ and 
$$
\mathrm{Hom}_{\mathcal{C}_Q^p}(X,Y):=\bigoplus_{t\in\mathbb Z}
\mathrm{Hom}_{\mathcal{D}}(X,(F^p)^tY).
$$
Morphisms are composed in a natural way.

When $p=1$, $\mathcal{C}_Q:=\mathcal{C}^1_Q$ is 
the {\em cluster category of type $Q$}
defined in \cite{BMRRT}, and independently
in \cite{CCS} in geometric terms for $Q$ of type $A_n$. In all other cases
$\mathcal{C}^p_Q$ is 
the {\em $p$-repetitive cluster category} studied by the 
author in \cite{Lisa2} for $Q$ of type $A_n$, 
and introduced by Zhu in
\cite{Zhu} for $Q$ an acyclic quiver. 

\subsection{Fundamental properties of orbit categories} 
Like $\mathcal{D}$, the categories $\mathcal{C}^p_Q$
are Krull-Schmidt 
and have finite dimensional $\mathrm{Hom}$-spaces.  
The categories $\mathcal{C}^p_Q$ are triangulated categories, 
and the projection functor
$\pi_i:\mathcal{D}\rightarrow \mathcal{C}^p_Q$, $i\in\mathbb N$ 
is a triangle functor,
see \cite[Theorem 1]{Keller}. The induced
shift functor is again denoted by $\Sigma$.
Moreover, the categories $\mathcal{C}^p_Q$ have AR-triangles and the AR-translation 
$\tau$ is induced from $\mathcal{D}$.
The categories $\mathcal{C}^p_Q$ also have the Calabi-Yau property, i.e.
$(\tau\Sigma)^m\stackrel{\sim}{\longrightarrow}\Sigma^n $
as triangle functors, here we identify $\tau\Sigma$ with 
the Serre functor of $\mathcal{C}^p_Q$.
In particular, in $\mathcal{C}_Q$ we have that $n=2$ and $m=1$,
hence $\mathcal{C}_Q$ is Calabi-Yau of dimension $2$. In
$\mathcal{C}^p_Q$ we have that $m=2$ and $n=p$ in the above
isomorphism of triangle functors, thus $\mathcal{C}^p_Q$  is said to be a 
Calabi-Yau category of fractional dimension $\frac{p}{2}$. Notice that a 
Calabi-Yau category of fractional dimension
$\frac{p}{2}$ is in general not a Calabi-Yau category of dimension 2. 
In this paper $p\in\{1,2\}$, moreover we adopt the convention 
$\mathrm{Ext}^i_{{\mathcal{C}^p}_Q}(X,Y):=\mathrm{Hom}_{{\mathcal{C}^p}_Q}(X,\Sigma^i Y)$.

\subsection{Auslander-Reiten quiver of a Krull-Schmidt category}

A {\em stable translation quiver} $(\Gamma,\tau)$ in the sense of Riedtmann, \cite{Riedtmann},
is a quiver $\Gamma$ without loops nor multiple edges, together with a 
bijective map $\tau:\Gamma\rightarrow\Gamma$ 
called {\em translation} such that for all vertices $x$ in $\Gamma$ 
the set of starting points of arrows which end 
in $x$ is equal to the set of end points of arrows which start at $\tau(x)$.

For $(\Gamma,\tau)$ one defines the mesh category as the quotient 
category of the additive path category of $\Gamma$ by the mesh ideal, see for example  \cite{K2}. 
In particular, the mesh category of $(\Gamma,\tau)$ is an additive category.

In the next result, let $\Z Q$ be the repetitive quiver of $Q$, see \cite[I,5.6]{Happel2} for a reminder 
on this construction. 
Let $\tau: \mathbb Z Q\rightarrow\mathbb Z Q$ be the automorphism
defined on the vertices $(n,i)$ of $\mathbb Z Q$ by 
$\tau(n,i)=(n-1,i)$, 
for $n\in\mathbb Z$, $i$ a vertex of $Q$.

\begin{thm}\cite[I.5.5]{Happel2}\label{thm presentation derived cat}
Let $kQ$ be a finite dimensional hereditary $k$-algebra. If $Q$ is an orientation of
\begin{itemize}\itemsep0em
\item a simply laced Dynkin graph,
$AR(\mathcal{D}^b(\mathrm{mod}\,kQ))$ is isomorphic (as stable translation quiver) 
to $\mathbb Z Q$.
\item an affine graph, 
$AR(\mathcal{D}^b(\mathrm{mod}\,kQ))$ splits into
components of the form $  \mathbb Z Q$ and $\mathbb Z A_\infty/r$, for some $r\in \mathbb N$.
\item a wild graph, the components of 
$AR(\mathcal{D}^b(\mathrm{mod}\,kQ))$ are of 
the form $ \mathbb Z Q$ 
and $\mathbb Z A_\infty$.
\end{itemize}
\end{thm}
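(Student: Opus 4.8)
The plan is to realise $\D$ as an ``unrolled'' copy of $\mathrm{mod}\,kQ$ and then to glue the known shape of $AR(\mathrm{mod}\,kQ)$ with its $\Sigma$-shifts. Since $kQ$ is hereditary, every object of $\D$ splits as $\bigoplus_i\Sigma^iH^i(X)$, so the indecomposables of $\D$ are exactly the $\Sigma^n M$ with $M$ an indecomposable $kQ$-module and $n\in\Z$, and $\Hom_{\D}(\Sigma^n M,\Sigma^m N)\cong\mathrm{Ext}^{m-n}_{kQ}(M,N)$, which vanishes unless $m-n\in\{0,1\}$. In particular $\D$ is Krull--Schmidt with finite-dimensional $\Hom$-spaces and, by the Serre duality recalled in Section~2, has Auslander--Reiten triangles with translation $\tau$, so $AR(\D)$ is a stable translation quiver. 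I would then use two standard facts about $\tau=\tau_{\D}$: on a non-projective indecomposable module it coincides with the usual AR-translation of $\mathrm{mod}\,kQ$, and the module AR-sequences become AR-triangles of $\D$; while on a projective $P_i$ one has $\tau_{\D}P_i\cong\Sigma^{-1}I_i$, because the Serre functor $\Sigma\tau$ of $\D$ is the derived Nakayama functor, which carries $P_i$ to $I_i$.

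It follows that $AR(\D)$ is assembled from the $\Z$-many shifted copies $\Sigma^n\,AR(\mathrm{mod}\,kQ)$ by the identifications forced by the AR-triangles: the module AR-sequences give every mesh lying inside a single copy, and the only additional meshes are those ending at a shifted projective $\Sigma^n P_i$. For such a mesh the AR-triangle is $\Sigma^{n-1}I_i\to E\to\Sigma^n P_i$, and splitting the relevant cone by means of the vanishing $\mathrm{Ext}^{\geq2}_{kQ}=0$ identifies $E$ with $\Sigma^n(\mathrm{rad}\,P_i)\oplus\Sigma^{n-1}(I_i/\mathrm{soc}\,I_i)$; the upshot is that $\Sigma^{n-1}I_i$ and $\Sigma^n P_i$ lie in a single $\tau$-orbit, so the $n$-th shifted copy of the projectives is linked to the $(n-1)$-st shifted copy of the injectives. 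Now I would invoke the classical description of $AR(\mathrm{mod}\,kQ)$ by representation type. If $Q$ is Dynkin it is finite, a convex full subquiver of $\Z Q$ with the projectives as its $\tau$-minimal vertices and the injectives as its $\tau^{-1}$-minimal vertices, these two sections being one Coxeter period apart; gluing all $\Sigma$-shifts along the projective--injective links tiles $\Z Q$ exactly once, yielding a connected stable translation quiver which admits a section isomorphic to $Q$, hence $\cong\Z Q$, and since $AR(\mathrm{mod}\,kQ)$ has no further components we get $AR(\D)\cong\Z Q$. If $Q$ is affine, $AR(\mathrm{mod}\,kQ)$ is the disjoint union of a postprojective component, a preinjective component, and a family of tubes of type $\Z A_\infty/r$; the shifted post- and preinjective components link up two at a time into components $\cong\Z Q$, whereas a tube contains no projective or injective, so it together with each of its $\Sigma$-shifts survives unchanged as a component $\Z A_\infty/r$. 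The wild case is identical, with the regular components of $\mathrm{mod}\,kQ$ now of type $\Z A_\infty$; these and their $\Sigma$-shifts become the components $\Z A_\infty$ of $AR(\D)$.

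The step I expect to be the main obstacle is verifying that the gluing genuinely produces $\Z Q$, rather than merely something locally isomorphic to it. This needs the finer structure of $AR(\mathrm{mod}\,kQ)$: that the postprojective component is a half of $\Z Q$ with the projectives forming its boundary layer, that the preinjective component is the complementary half with the injectives as its boundary layer, and that the Nakayama isomorphisms $\nu P_i\cong I_i$ match these two layers compatibly with all arrows, so that the two halves glue back to $\Z Q$; in the Dynkin case one additionally needs that the two layers sit one $\tau$-period apart, which is where finiteness of $AR(\mathrm{mod}\,kQ)$ and the Coxeter-number bookkeeping enter, and one closes with the uniqueness principle that a connected stable translation quiver admitting a section isomorphic to $Q$ is isomorphic to $\Z Q$. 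By contrast, the input taken from Section~2 --- the existence of AR-triangles in $\D$ and the identity $\tau_{\D}P_i\cong\Sigma^{-1}I_i$ --- is immediate, and the mesh computation at the shifted projectives is routine once the higher $\mathrm{Ext}$-vanishing has been used.
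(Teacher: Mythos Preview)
The paper does not give its own proof of this theorem: it is quoted from Happel \cite[I.5.5]{Happel2} as a background result, with no argument supplied. So there is nothing in the paper to compare your proposal against.

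That said, your outline is the standard argument and is essentially how Happel proceeds. The decomposition of indecomposables of $\D$ as stalk complexes $\Sigma^n M$, the identification of $\tau_{\D}$ with the module-theoretic $\tau$ away from projectives, the connecting AR-triangles $\Sigma^{-1}I_i\to\mathrm{rad}\,P_i\oplus\Sigma^{-1}(I_i/\mathrm{soc}\,I_i)\to P_i$, and then the appeal to the known shape of $AR(\mathrm{mod}\,kQ)$ in the three representation types --- all of this is correct and is the intended route. Your identification of the delicate point (that in the Dynkin case the glued picture really is $\Z Q$ and not merely a cover or a quotient) is apt; Happel handles it via Riedtmann's structure theorem for stable translation quivers, which is exactly the ``uniqueness principle'' you invoke at the end. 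No gap.
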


Let $\mathrm{ind}\, \mathcal{D}$ be the full 
subcategory of $\mathcal{D}$
of indecomposable objects.
\begin{thm}\cite[I,5.6]{Happel2}\label{Happel2}
Let $Q$ be an orientation of a simply laced Dynkin graph. 
The mesh category of $(\mathbb{Z} Q,\tau)$ 
is equivalent to $\mathrm{ind }\,\mathcal{D}$. 
\end{thm}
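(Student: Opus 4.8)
The plan is to exhibit the asserted equivalence as the ``standardness'' of the unique Auslander--Reiten component of $\mathcal D$: I would build a $k$-linear functor from the mesh category of $(\mathbb Z Q,\tau)$ into $\mathrm{ind}\,\mathcal D$ out of a coherent choice of irreducible morphisms, and then verify that this functor is essentially surjective, full, and faithful.

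First, by Theorem \ref{thm presentation derived cat} there is an isomorphism of stable translation quivers $AR(\mathcal D)\xrightarrow{\sim}\mathbb Z Q$; I use it to label the indecomposable objects of $\mathcal D$ as $X_v$, for $v$ a vertex of $\mathbb Z Q$, compatibly with $\tau$ and with the arrows. For each arrow $a\colon v\to w$ of $\mathbb Z Q$ choose an irreducible morphism $\varphi_a\colon X_v\to X_w$ (the space of irreducible maps $X_v\to X_w$ is one-dimensional, since $\mathbb Z Q$ has no multiple arrows). Because $Q$ is a tree, $\mathbb Z Q$ is simply connected, so the scalars can be fixed coherently in such a way that for every vertex $w$ the morphism $\sum_{a\colon v\to w}\varphi_a\,\varphi_{\sigma a}$ is a nonzero scalar times the composite of the two maps in the Auslander--Reiten triangle $X_{\tau w}\to\bigoplus_{a\colon v\to w}X_v\to X_w\to\Sigma X_{\tau w}$, hence is $0$. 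Sending $v\mapsto X_v$ and $a\mapsto\varphi_a$ then defines a functor from the additive path category of $\mathbb Z Q$ that annihilates the mesh ideal, i.e. a functor $\bar F$ from the mesh category of $(\mathbb Z Q,\tau)$ to $\mathrm{ind}\,\mathcal D$. It is bijective on objects, so essential surjectivity is immediate.

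For fullness one needs $\mathrm{rad}^{\infty}_{\mathcal D}(X,Y)=0$ for all indecomposables $X,Y$: granting this, since by Auslander--Reiten theory $\mathrm{rad}/\mathrm{rad}^2$ between indecomposables is spanned by irreducible maps, every morphism is a finite sum of composites of the $\varphi_a$ and so lies in the image of $\bar F$. The vanishing of $\mathrm{rad}^{\infty}$ follows from a Harada--Sai argument: $kQ$ is representation finite, so there are only finitely many indecomposable $kQ$-modules and all of them, together with all their shifts, have uniformly bounded length; hence a sufficiently long composite of non-isomorphisms between indecomposable objects of $\mathcal D$ vanishes.

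Faithfulness is the main obstacle, and I would settle it by a dimension count rather than by analysing relations directly. On the derived side, applying $\mathrm{Hom}_{\mathcal D}(X_x,-)$ to Auslander--Reiten triangles shows that $g_x(y):=\dim_k\mathrm{Hom}_{\mathcal D}(X_x,X_y)$ is a function on the vertices of $\mathbb Z Q$ which satisfies the mesh relations except for a defect equal to $1$ at $y=x$ and at $y=\Sigma x$ (here $\Sigma$ denotes the translation-quiver automorphism of $\mathbb Z Q$ induced by the shift functor), and which has finite support because $kQ$ is hereditary and representation finite. The analogous function $h_x(y):=\dim_k\mathrm{Hom}_{\mathrm{mesh}}(x,y)$ for the mesh category is a finitely supported function on $\mathbb Z Q$ with exactly the same defect pattern. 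Since $Q$ is Dynkin, a finitely supported function on $\mathbb Z Q$ that satisfies the mesh relations with no defect at all is identically zero; applied to $g_x-h_x$ this gives $g_x=h_x$. Together with fullness, this forces $\bar F$ to be bijective on every $\mathrm{Hom}$-space, hence an equivalence. Alternatively one may invoke the general principle that an Auslander--Reiten component of a Krull--Schmidt $k$-category on which $\mathrm{rad}^{\infty}$ vanishes is standard, applied to the single component $AR(\mathcal D)\cong\mathbb Z Q$.
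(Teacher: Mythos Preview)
The paper does not prove this theorem at all: it is quoted from Happel's book \cite[I,5.6]{Happel2} and stated without proof, so there is no ``paper's own proof'' to compare against. Your proposal is therefore not a reconstruction of anything in this article but rather a sketch of the classical argument (essentially the Riedtmann--Happel proof of standardness of $\mathbb Z Q$ for Dynkin $Q$), and as such it is reasonable in outline.

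Two small comments on the content. First, your description of the defect pattern of $g_x(y)=\dim_k\mathrm{Hom}_{\mathcal D}(X_x,X_y)$ is slightly imprecise: applying $\mathrm{Hom}_{\mathcal D}(X_x,-)$ to the AR-triangle $\tau X_y\to E\to X_y\to\Sigma\tau X_y$ produces a long exact sequence whose failure of additivity is governed by the connecting maps, and one has to argue carefully (using Serre duality and heredity) that the only nonzero defect occurs at $y=x$; the appearance of a second defect ``at $y=\Sigma x$'' is not quite how the bookkeeping works out, though the conclusion $g_x=h_x$ is of course correct. Second, the coherent choice of signs for the $\varphi_a$ so that the mesh relations hold on the nose (not merely up to scalars) is the genuinely delicate step in this argument; invoking simple connectedness of $\mathbb Z Q$ for a tree $Q$ is the right idea, but a fully rigorous treatment needs a bit more care, as in Riedtmann's original paper or the exposition in Happel's book. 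Neither point is fatal, and the alternative you mention at the end (appealing to the general principle that a component with $\mathrm{rad}^\infty=0$ is standard) is exactly how most modern sources package the result.
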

A first important consequence of this result is that the AR-quiver 
of $\mathcal{D}$ is independent of the orientation of $Q$. 

\subsection{Induced action of $\Sigma$ on $\Z Q$ }\label{action}
In this section let $Q$ be an orientation of a simply-laced Dynkin graph.  
Below we point out some known facts about the induced action of $\Sigma$ and $\tau$ 
on $\Z Q$ taken from \cite[Chap. 4]{Miyachi}, see also \cite[Chap. 4]{Jorgensen}.
These considerations, together with Theorem \ref{thm presentation derived cat}, 
will enable us to determine the precise shape of the AR-quiver 
of various orbit categories investigated in the sequel.  

The induced action of $\tau$ on $\mathbb Z Q$ is always an 
horizontal shift to the left.
The induced action of $\Sigma$ on $\mathbb Z A_n$ coincides with
a shift of $\frac{n+1}{2}$ units to the right, composed with a reflection 
along the horizontal central line of $\mathbb Z A_n$.
On $\mathbb Z D_n$ the action of $\Sigma$
agrees with $\tau^{-(n-1)}$ composed with
an order two automorphism $\rho$ defined 
on $\mathbb{Z} D_n$ when $n$ is odd. 
While on $\mathbb{Z} E_6$ the action of $\Sigma$
coincides with the action of $\rho\tau^{-6}$ where
$\rho$ is an automorphism of order two defined on $\mathbb{Z} E_6$.
Moreover, $\Sigma$ acts as $\tau^{-9}$ on $\mathbb{Z} E_7$, 
and as $\tau^{-15}$ on $\mathbb{Z} E_8$. 

\subsection{The repetitive cluster category $\mathcal{C}^p_{A_{n}}$}

The geometrical model for the cluster 
categories of type $E$
we propose in this paper is motivated from the following idea: glue together
two copies of AR-quiver of $\mathcal{C}^2_{A_{n}}$. 
Let us remind the reader some facts
about this category.

The previous discussion implies that
$(AR(\mathcal{C}^p_{A_{n}}),\tau)\cong(\mathbb Z A_{n}/(\tau^{-1}\Sigma)^p,\tau)$, for $p\in\N$.
Moreover, $(AR(\mathcal{C}^p_{A_{n}}),\tau)$ 
can be modelled using diagonals in polygons
as done in \cite{Lisa2}.
When $p=2$, the quiver
$(AR(\mathcal{C}^2_{A_{n}}),\tau)$ can entirely 
be modelled using oriented diagonals in a regular $(n+3)$-gon.
In Figure \ref{ARA4} an illustration of 
this construction is provided for $p=2,n=4$. 
\begin{figure}[h]
\center
\includegraphics[scale=0.5]{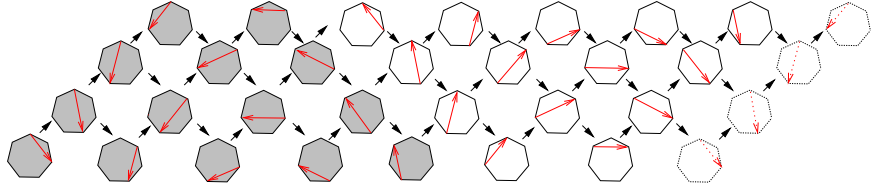}
\caption{AR-quiver of $\mathcal{C}^2_{A_4}$.}
\label{ARA4}
\end{figure}

\section{Single coloured oriented and paired diagonals in polygons}\label{sect3}

Throughout the rest of the paper let $T_{r,s,t}$ be 
an orientation of a finite connected 
graph with $r+s+t+1$ vertices and three legs. We assume 
the legs of $T_{r,s,t}$ to have $r$, resp.
$s$, resp. $t$ vertices and that one vertex of $T_{r,s,t}$ has three neighbours. 
We say that a tree $T_{r,s,t}$ is {\em symmetric} if $s=t$. 

Unless specified otherwise, for $n\geq\mathrm{max}\{r+s+1;r+t+1\}$ let 
$\Pi$ be a regular $(n+3)$-gon with vertices numbered in the 
clockwise order by the group $\mathbb Z/(n+3)\mathbb Z$. 

For vertices $i,j,k$ of $\Pi$ we write $i\leq j\leq k$
if $j$ is between $i$ and $k$ in the clockwise order.
Moreover, we denote by $(i,j)$ the 
unoriented diagonal of $\Pi$
joining the vertices $i$ and $j$ and by 
$[i,j]$ the oriented diagonal 
of $\Pi$ starting at $i$ and ending in $j$. 
We do not consider boundary segments 
as oriented diagonals.

\subsection{Single coloured oriented and paired diagonals of $\Pi$}\label{pairs}

We start describing the geometric 
construction leading to the
modelling of a number of orbit categories of 
$\mathcal{D}^b(\mathrm{mod}\, k T_{r,s,t})$ 
arising from orientations of symmetric trees $T_{r,s,t}$.

To begin the construction we double
the set of oriented diagonals of $\Pi$,
and distinguish each set with colours 
using subscripts $R$, $B$
e.g. $[1,3]_R$ is the red diagonal  
linking the vertex 1 to 3 of $\Pi$. 
For every vertex $i$ of $\Pi$ we form 
the following $(r+1)$ pairs of 
coloured oriented diagonals:
\begin{align*}\itemsep0em
[i,i+2]_P=&[i+2,i]_P:=\{ [i,i+2]_R,[i+2,i]_B\}\\
[i,i+3]_P=&[i+3,i]_P:=\{ [i,i+3]_R,[i+3,i]_B\}\\
&\dots\\
[i,i+r+2]_P=&[i+r+2,i]_P:=\{ [i,i+r+2]_R,[i+r+2,i]_B\}.
\end{align*}
When $r=0$, we assume that there are no 
paired diagonals. Moreover, let us point out that
$[i,j]_P\neq[j,i]_P$.

Next we define a subset of
$(r+1)(n+3)$ paired, $s(n+3)$ single red and $t(n+3)$ single blue oriented 
diagonals of $\Pi$ as follows:
\begin{align*}\itemsep0em
\Pi_{r,s,t}:=\bigg\{&[i,i+2]_P, \dots,[i,i+r+2]_P, \\
&[i,i+r+3]_R,\dots,[i,i+r+s+2]_R, \\
&[i+r+3,i]_B, \dots,[i+r+t+2,i]_B,\hspace{0,3cm} \textrm{ $i$ vertex  of }\Pi\bigg\}.
\end{align*}
Once coloured oriented 
diagonals are paired, they stop existing as single 
coloured oriented diagonals in $\Pi_{r,s,t}$.

Consider the subset of elements of $\Pi_{r,s,t}$ given by
$\Pi_{r,s,t}\vert_1:=\{[1,3]_P, \dots,[1,r+3]_P, 
[1,r+4]_R,\dots,[1,r+s+3]_R, 
[r+4,1]_B, \dots,[r+t+3,1]_B\}$. Then  on the one side 
elements of $\Pi_{r,s,t}\vert_1$ are in bijection with
the vertices of $T_{r,s,t}$. On the other side,  elements of $\Pi_{r,s,t}\vert_1$ give rise to a 
triangulation of a region
inside $\Pi$ homotopic to 
a regular $(r+s+4)$-gon, resp. to a $(r+t+4)$-gon. 

In Figure \ref{projdiag} an illustration of this situation is provided. 
Coloured oriented diagonals with the same label are identified.
The black vertices in the figure represent the vertices of $T_{r,s,t}$.
The dotted lines are the edges of $T_{r,s,t}$. 
Vertices on, and edges between, identified paired diagonals 
give rise to one vertex, and one edge, in $T_{r,s,t}$.

\begin{figure}[H]
\psfragscanon
 \psfrag{i}[][][0.65]{$i$}
  \psfrag{i+1}[][][0.65]{$i+1$}
  \psfrag{i+2}[][][0.65]{$i+2$}
  \psfrag{i+r+1}[][][0.65]{$i+r+2$}
  \psfrag{i+r+2}[][][0.65]{$i+r+3$}
  \psfrag{i+r+3}[][][0.65]{$i+r+4$}
  \psfrag{i+r+s+2}[][][0.65]{$i+r+s+2$}
   \psfrag{i+r+s+3}[][][0.65]{$i+r+s+3$}
   \psfrag{i+r+t+2}[][][0.65]{$i+r+t+2$}
   \psfrag{i+r+t+3}[][][0.65]{$i+r+t+3$}
    \psfrag{d}[][][0.5]{$\dots$}
\includegraphics[scale=0.4]{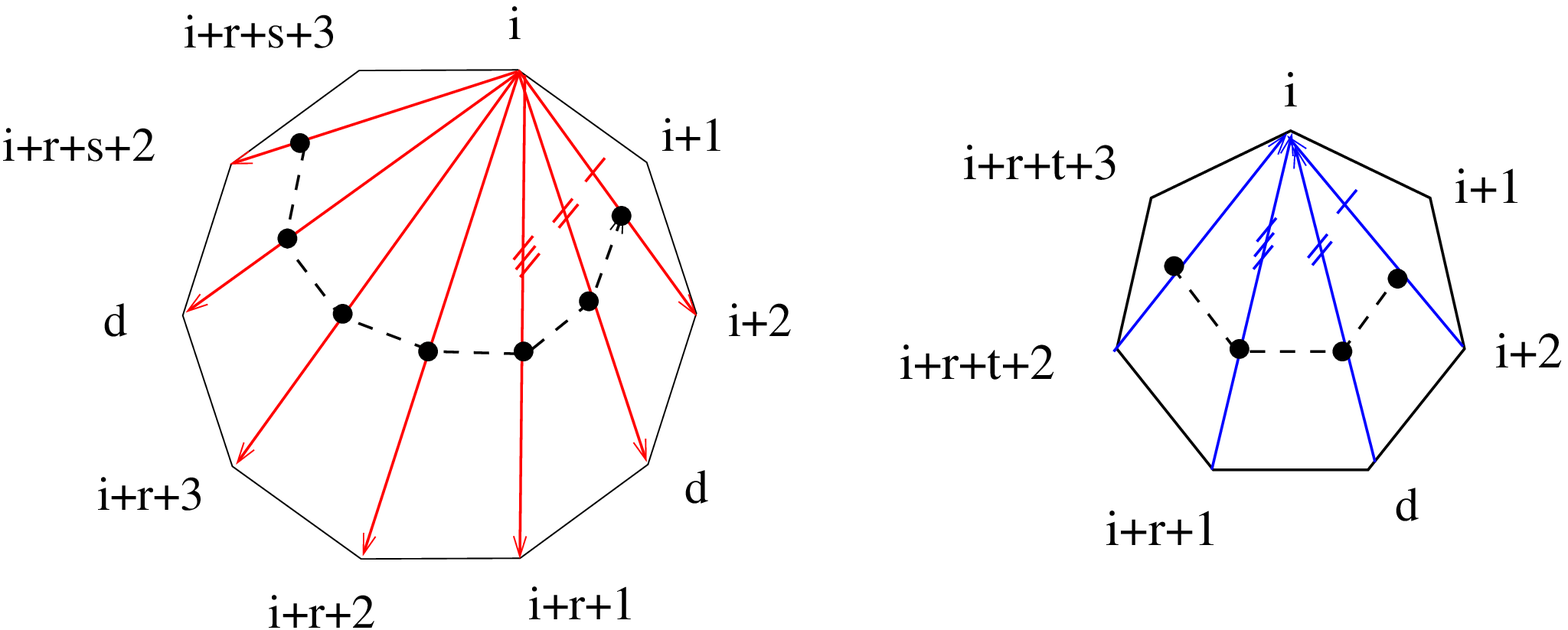}
\caption{The coloured oriented diagonals in bijection 
with the vertices of $T_{2,4,1}$.}
\label{projdiag}
\end{figure}

\subsection{The automorphisms $\rho$ and $\tau$}\label{rhotau}
Our next aim is to define two automorphisms: $\rho$
and $\tau$, acting on the set of coloured oriented single an paired diagonals
of $\Pi$ associated to a tree 
$T_{r,s,t}$. The first automorphism is induced from the graph automorphism 
of a symmetric tree, hence only defined on $\Pi_{r,t,t}.$ The definition of
the second automorphism depends on the parity of the number of sides
of $\Pi$.

Let $c\in\{R,B,P\}$. Then we define $\rho:\Pi_{r,t,t}\rightarrow \Pi_{r,t,t}$ 
as the automorphism of order two
given by  
$$
\rho\big([i,j]_c\big):=\begin{cases}
[j,i]_B & \textrm{if } c=R, \\
[j,i]_R  &\textrm{if } c=B,\\
[i,j]_P &\textrm{otherwise.}
\end{cases}
$$

\begin{figure}[H]
\psfragscanon
 \psfrag{1}[][][0.5]{$1$}
  \psfrag{2}[][][0.5]{$2$}
   \psfrag{3}[][][0.5]{$3$}
    \psfrag{4}[][][0.5]{$4$}
     \psfrag{5}[][][0.5]{$5$}
      \psfrag{6}[][][0.5]{$6$}
       \psfrag{7}[][][0.5]{$7$}
             \psfrag{r}[][][0.5]{$\rho$}
\includegraphics[scale=0.28]{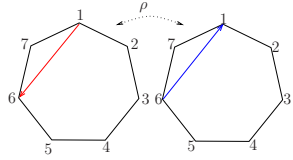}
\caption{The symmetry $\rho$.}
\label{rho}
\end{figure}
Moreover, we define the automorphism $\tau:\Pi_{r,s,t}\rightarrow \Pi_{r,s,t}$ as follows:
if $s\neq t$, then 
$
\tau([i,j]_c):=[i-1,j-1]_c.
$ If $s=t$, then
$$
\tau([i,j]_c):=\begin{cases}
\rho^{(n+3)}\big([i-1,j-1]_c\big)  &\textrm{ if }[i,j]_c\in  \Pi_{r,t,t}\vert_1 \\
[i-1,j-1]_c & \textrm{ otherwise.}
\end{cases}
$$

Geometrically, the action of $\tau$  is given by the 
anticlockwise rotation through $\frac{2\pi}{n+3}$
around the centre of $\Pi$, on all elements different then the diagonals in $\Pi_{r,t,t}\vert_1$
associated to a symmetric tree in a polygon with an odd number of sides. On the
latter the rotation is followed by the simultaneous change of colour and orientation.

\subsection{Minimal clockwise rotations}\label{rotations}\label{quivegamma}
Minimal clockwise rotations for unoriented diagonals have
been introduced in \cite[\S 2]{CCS} with the aim 
of modelling irreducible morphisms
in the cluster category $\mathcal{C}_{A}$. 
Following the spirit of \cite{CCS} we now define
minimal rotations between diagonals of $\Pi_{r,s,t}$. 

Let $k,l$ be non-neighbouring vertices of $\Pi$ and 
let $c\in\{R, B, P\}$. Let $\Pi_{r,s,t}$ be the set of coloured oriented
single and paired diagonals in $\Pi$ associated to an asymmetric tree $T_{r,s,t}$. 
Then the following three operations between diagonals in 
$\Pi_{r,s,t}$ are called {\em  minimal clockwise rotation}:
$[k,l]_c\rightarrow [k,l+1]_c$ and $[k,l]_c\rightarrow[k+1,l]_c$  
and
$$
\xymatrix@R=0.5pc @C=0.5pc{
&&[k,k+r+3]_R\ar[rd]&&\\
&[k,k+r+2]_P \ar[ru]\ar[rd]& &[k+1,k+r+3]_P. \\
&&[k+r+3,k]_B\ar[ur]&&\\
}
$$
Next, let $\Pi_{r,t,t}$ be the set of coloured oriented
single and paired diagonals in $\Pi$ associated to a symmetric tree $T_{r,t,t}$. 
Then minimal clockwise rotations are defined as before
with the following adjustment: when $[k,l]_c $,
$[k,k+r+2]_P$ are in $\tau(\Pi_{r,t,t}\vert_1)$ and $\Pi$ is odd sided, then 
we also change simultaneously the colour an the orientation. 

In Figure \ref{D4} and Figure \ref{ARE6} illustrations of the 
three minimal rotations described above
can be found.

The next remark can be used to model orbit categories arising from
orientations of tree graphs  $T_{r_1,r_2,\dots,r_m}$ where 
one vertex has $m$ neighbours.
\begin{rem}
There are three minimal rotations linking the three types of coloured oriented
single and paired diagonals of $\Pi_{r,r,t}$, namely the red, 
the blue and the paired ones.
Starting with oriented diagonals coloured in $m-1$ 
ways, one can describe the $m$ minimal rotations linking the $m$ types 
of coloured oriented diagonals with a similar diagram as above.
\end{rem}

\subsection{Quivers of single coloured oriented 
and paired diagonals of $\Pi_{r,s,t}$}\label{stabtrans}

Let $\Gamma^{n+3}_{r,s,t}$ be the quiver whose vertices are the 
elements of $\Pi_{r,s,t}$. An arrow between two vertices of $\Gamma^{n+3}_{r,s,t}$ 
is drawn whenever there is a minimal clockwise rotation linking them. No arrow is drawn otherwise. 

Concerning the shape of $\Gamma^{n+3}_{r,s,t}$ we remark 
that $\Gamma^{n+3}_{r,s,t}$ always lies on a cylinder, except when $\Pi$ is odd-sided, 
and $s=t$. Then we say that
$\Gamma^{n+3}_{r,t,t}$ lies on a 
M\"obius strip, since
the $\tau$-orbits of single
coloured oriented diagonals are twice as long as 
the $\tau$-orbits of paired diagonals.

In Figure \ref{D4} the quiver $(\Gamma^{6}_{1,1,1},\tau)$ 
of coloured oriented diagonals in
a hexagon, as well as the quiver
$(\Gamma^7_{1,2,2},\tau)$ associated to 
a heptagon are illustrated. In each quiver 
we indicate in the last slice
the identifications occurring. On both quivers 
the action of $\tau$ is always 
given by a shift to the left.

\begin{figure}[H]
\psfragscanon
 \psfrag{1}[][][0.5]{$1$}
  \psfrag{2}[][][0.5]{$2$}
   \psfrag{3}[][][0.5]{$3$}
    \psfrag{4}[][][0.5]{$4$}
     \psfrag{5}[][][0.5]{$5$}
      \psfrag{6}[][][0.5]{$6$}
\includegraphics[scale=0.5]{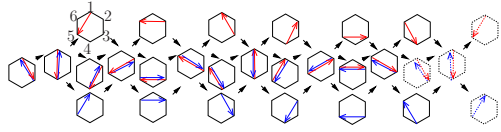}
\caption{The quiver $(\Gamma^6_{1,1,1},\tau)$ corresponds to the AR-quiver of
the orbit category $\mathcal{D}^b(\mathrm{mod}k D_4)/\Sigma^2$. }
\label{D4}
\end{figure}

\begin{figure}[H]
\psfragscanon
 \psfrag{1}[][][0.5]{$1$}
  \psfrag{2}[][][0.5]{$2$}
   \psfrag{3}[][][0.5]{$3$}
    \psfrag{4}[][][0.5]{$4$}
     \psfrag{5}[][][0.5]{$5$}
      \psfrag{6}[][][0.5]{$6$}
       \psfrag{7}[][][0.5]{$7$}
\includegraphics[scale=0.5]{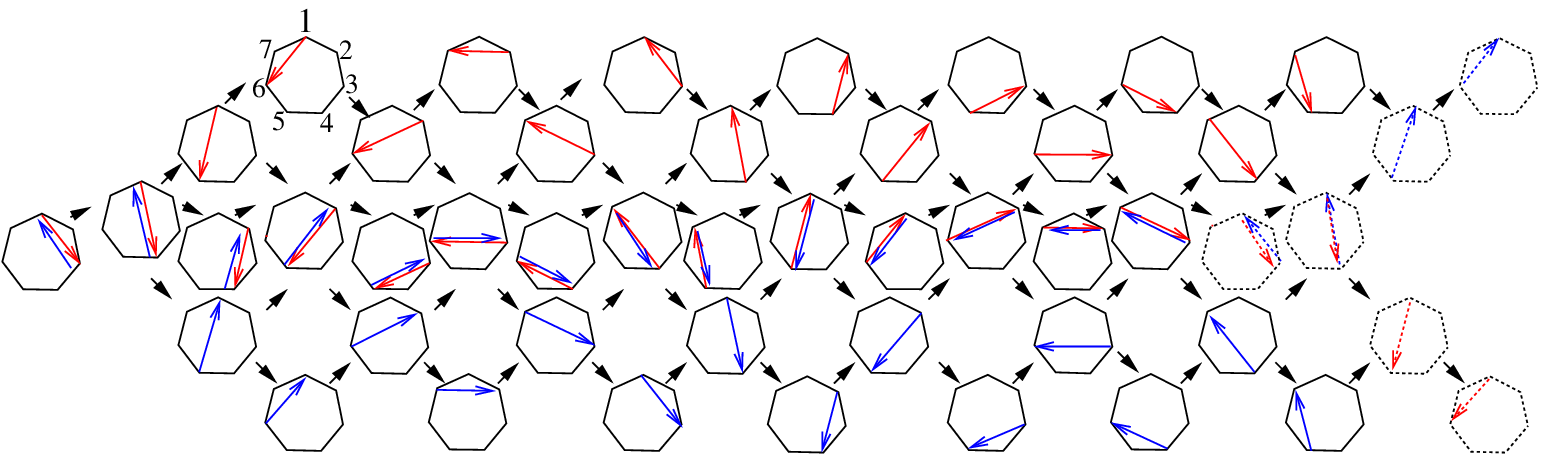}
\caption{The quiver $(\Gamma^7_{1,2,2},\tau)$
corresponds to the AR-quiver of
the orbit category $\mathcal{D}^b(\mathrm{mod}k E_6)/\tau^{-1}\Sigma$. }
\label{ARE6}
\end{figure}

\section{Equivalences of categories}\label{eqcat}

In this  section we show that
the construction of $\Gamma^{n+3}_{r,t,t}$ and $\Gamma^{n+3}_{r,s,t}$ 
allow us to model geometrically properties of a number of additive categories.

Let $\rho$ be the automorphism of $T_{r,t,t}$ and let $n\geq\max\{r+t+1,r+s+1\}$. 
Then we can show the following opening result.
\begin{lemma}\label{propT}
The quiver $(\Gamma^{n+3}_{r,s,t},\tau)$ is a stable translation quiver.
\end{lemma}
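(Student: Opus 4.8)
The plan is to verify the two defining properties of a stable translation quiver directly from the definitions of $\Pi_{r,s,t}$, the minimal clockwise rotations, and the automorphism $\tau$. First I would check that $\Gamma^{n+3}_{r,s,t}$ has no loops and no multiple arrows: an arrow comes from a minimal clockwise rotation $[k,l]_c\to[k,l+1]_c$, $[k,l]_c\to[k+1,l]_c$, or one of the two arrows in the paired-diagonal diamond, and since the hypothesis $n\geq\max\{r+s+1,r+t+1\}$ guarantees the polygon is large enough that no diagonal degenerates to a boundary segment or a point, each such rotation changes a diagonal to a genuinely different diagonal, and no two distinct rotations from a fixed vertex land on the same target. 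Here one must be a little careful at the ``junction'' where a paired diagonal $[k,k+r+2]_P$ sits: it has exactly two outgoing rotations (to $[k,k+r+3]_R$ and to $[k+r+3,k]_B$, which then both feed into $[k+1,k+r+3]_P$), and this is still only single arrows. The twist on the Möbius case ($\Pi$ odd, $s=t$) only relabels colour/orientation on $\tau(\Pi_{r,t,t}\vert_1)$ and does not create loops or doubled arrows.

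Second, I would check that $\tau\colon\Pi_{r,s,t}\to\Pi_{r,s,t}$ is a bijection — this is immediate from its definition as (a rotation by $\tfrac{2\pi}{n+3}$), possibly post-composed with the involution $\rho$ on the single slice $\Pi_{r,t,t}\vert_1$, both of which are invertible — and then verify the mesh condition: for every vertex $x$, the set of sources of arrows ending at $x$ equals the set of targets of arrows starting at $\tau x$. The cleanest way is to observe that $\tau$ is a quiver automorphism of $\Gamma^{n+3}_{r,s,t}$, i.e. it sends minimal clockwise rotations to minimal clockwise rotations. Indeed a rotation $[k,l]_c\to[k,l+1]_c$ is carried by $\tau$ to $[k-1,l-1]_c\to[k-1,l]_c$ (and similarly for the other generic type and for the paired diamond), and on the special $\tau$-orbit in the odd/symmetric case the simultaneous colour-and-orientation change in the definition of the minimal rotation is exactly compatible with $\rho^{(n+3)}$, so $\tau$ still maps rotations to rotations there. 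Granting that $\tau$ is an automorphism, the mesh condition reduces to the local statement that for each $x$ the arrows into $x$ and the arrows out of $x$ are ``matched'' in the sense of a translation quiver with $\tau$ the shift — i.e. that the local configuration around each vertex of $\Gamma^{n+3}_{r,s,t}$ is one of the standard pictures ($\mathbb{Z}A_\infty$-type at the single-diagonal vertices, and the trivalent diamond pattern at the paired-diagonal junctions) displayed in Figures~\ref{D4} and~\ref{ARE6}.

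Concretely, I would enumerate the possible shapes of a vertex $v=[k,l]_c$ by how close $(k,l)$ is to the ``boundary'' of the relevant triangulated region: a single red diagonal $[k,l]_R$ with $l-k$ strictly between its extreme allowed values has two incoming rotations ($[k-1,l]_R\to v$ and $[k,l-1]_R\to v$) and two outgoing ($v\to[k+1,l]_R$ and $v\to[k,l+1]_R$), and one computes directly that $\{\text{sources into }v\}=\{[k-1,l]_R,[k,l-1]_R\}$ while $\{\text{targets out of }\tau v\}=\{\text{targets out of }[k-1,l-1]_R\}=\{[k,l-1]_R,[k-1,l]_R\}$, which agree; at the extreme values of $l-k$ one of the two incoming (resp. outgoing) arrows is suppressed, and the paired-diagonal diamond removes/adds the matching pair on the $\tau$-shifted side in the same way, so the mesh identity persists. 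The paired junction vertices $[k,k+r+2]_P$ and $[k+1,k+r+3]_P$ need the explicit diamond: $[k+1,k+r+3]_P$ receives arrows from $[k,k+r+3]_R$ and $[k+r+3,k]_B$, and $\tau[k+1,k+r+3]_P=[k,k+r+2]_P$ emits arrows to exactly $[k,k+r+3]_R$ and $[k+r+3,k]_B$, matching again.

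The main obstacle I anticipate is bookkeeping at the Möbius-strip case, where $\Pi$ is odd-sided and $s=t$: there the $\tau$-orbit of a single coloured diagonal has twice the length of a paired one, $\tau$ on $\Pi_{r,t,t}\vert_1$ is a rotation composed with $\rho$, and the minimal rotations on $\tau(\Pi_{r,t,t}\vert_1)$ carry the extra colour/orientation flip. One has to check that these flips are globally consistent — that going once around the strip and applying $\rho$ the appropriate number of times returns every diagonal to itself with the correct colour — and that the mesh relation still holds across the ``seam'' where the flip is introduced. I expect this to come down to the fact that $\rho$ is an involution commuting appropriately with the rotation, together with the parity of $n+3$, so that $\rho^{(n+3)}=\rho$ and the twist is introduced exactly once per long orbit; the local mesh pictures are then the same as in the untwisted case, just read on the Möbius quotient. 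Verifying this carefully, with the aid of Figure~\ref{ARE6}, is the one place where I would slow down and check all cases.
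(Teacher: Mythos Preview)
Your direct verification is correct in spirit and would go through, but the paper takes a more structural route that you should be aware of. Rather than checking the mesh axiom vertex by vertex, the paper observes that the restriction of $\Gamma^{n+3}_{r,t,t}$ to each colour (red, resp.\ blue) is already a known stable translation quiver, namely $\mathbb{Z}A_{r+t+1}/\tau^{-(n+3)}$, because oriented diagonals of a fixed colour in $\Pi$ are exactly the model for $\mathcal{C}^2_{A_{r+t+1}}$. Forming the paired diagonals then amounts to gluing the two coloured copies along $r$ disjoint $\tau$-orbits, and the stable-translation-quiver axioms are visibly preserved by such a gluing. The odd-sided symmetric case and the asymmetric case are then handled by the same recognition. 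This buys you the result almost for free once you know the type~$A$ picture, and it simultaneously gives the isomorphism $\Gamma^{n+3}_{r,s,t}\cong\mathbb{Z}T_{r,s,t}/\tau^{-(n+3)}$ (resp.\ with the $\rho$-twist) that is needed immediately afterwards in Theorem~\ref{isoar}.

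Your approach has the advantage of being self-contained and of making the M\"obius bookkeeping completely explicit, which the paper only gestures at. Two small points to tighten: first, at the junction vertex $[k+1,k+r+3]_P$ you list only the two diamond arrows, but when $r\geq 1$ there is also a third incoming arrow from $[k+1,k+r+2]_P$ (and correspondingly $\tau$ of that vertex, namely $[k,k+r+2]_P$, emits a third arrow to $[k+1,k+r+2]_P$), so the mesh still matches but is trivalent there, consistent with the tree $T_{r,s,t}$ having a trivalent vertex. Second, you do not mention connectedness or local finiteness; both are immediate, but the paper lists them explicitly among the things to check.
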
 
\begin{proof} 
We have to show three things. 
First, that $\Gamma^{n+3}_{r,s,t}$ 
is connected, has no loops, and is locally finite. 
Second, that for every vertex $v$ of $\Gamma^{n+3}_{r,s,t}$  
the number of arrows going to $v$ equals the number of arrows leaving $v$. 
Third, that the map $\tau$ is bijective.

All the above properties follow from the construction. Let
us first consider the symmetric case.
Then it is not hard to see that when $\Pi$ is even sided,  
$\Gamma^{n+3}_{r,t,t}|_R\cong \mathbb Z A_{r+t+1}/\tau^{-(n+3)}$ 
and $\Gamma^{n+3}_{r,t,t}|_B\cong \mathbb Z A_{r+t+1}/\tau^{-(n+3)}$  since 
we consider oriented arcs. Thus, we deduce that
$(\Gamma^{n+3}_{r,t,t}|_R,\tau\vert_R)$ and $(\Gamma^{n+3}_{r,t,t}|_B,\tau\vert_B)$
are stable translation quivers.
Forming pairs of coloured oriented 
diagonals results in gluing these two 
quivers along $r$ disjoint $\tau$-orbits, and the above properties are preserved.
When $\Pi$ is odd sided one can check 
that the modifications in the definition 
of minimal clockwise rotations and in the definition of $\tau$
are such that the resulting quiver has the claimed properties. 

Finally, it is not hard to see that also $\Gamma^{n+3}_{r,s,t}$ 
associated to an asymmetric tree is a stable translation quiver. 
\end{proof}

\begin{thm}\label{isoar}
Let $\mathcal{T}$ be an additively finite Krull-Schmidt category. Let $\Gamma$
be a connected component of the AR-quiver of $\mathcal{T}$. Assume that
$\Gamma\cong \mathbb Z T_{r,s,t}/\tau^{-(n+3)}$, resp.
$\Gamma\cong\mathbb Z T_{r,t,t}/\tau^{-(n+3)}\rho$.
Then there is an isomorphism of stable translation quivers
$\Gamma^{n+3}_{r,s,t}\rightarrow \Gamma,$ 
for $\Gamma^{n+3}_{r,s,t}$ associated to a regular $(n+3)$-gon.
\end{thm}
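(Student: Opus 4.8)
The plan is to construct the isomorphism $\Gamma^{n+3}_{r,s,t}\to\Gamma$ explicitly by matching up $\tau$-orbits, building on Theorem~\ref{thm presentation derived cat} and Theorem~\ref{Happel2}. Since $\Gamma\cong\mathbb Z T_{r,s,t}/\tau^{-(n+3)}$ (or the twisted quotient by $\tau^{-(n+3)}\rho$ in the symmetric case), it suffices to produce an isomorphism of stable translation quivers $\Gamma^{n+3}_{r,s,t}\to\mathbb Z T_{r,s,t}/\tau^{-(n+3)}$, respectively $\Gamma^{n+3}_{r,t,t}\to\mathbb Z T_{r,t,t}/\tau^{-(n+3)}\rho$; the composite with the given isomorphism then finishes the proof. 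By Lemma~\ref{propT} the source is already a stable translation quiver, so the content is entirely in writing down a $\tau$-equivariant bijection on vertices that carries minimal clockwise rotations to arrows of $\mathbb Z T_{r,s,t}$.

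First I would treat the asymmetric case $s\neq t$. The slice $\Pi_{r,s,t}\vert_1$ is, as observed in Section~\ref{pairs}, in bijection with the vertices of $T_{r,s,t}$: the paired diagonals $[1,3]_P,\dots,[1,r+3]_P$ give the central node and the $r$-leg, the red diagonals $[1,r+4]_R,\dots,[1,r+s+3]_R$ give the $s$-leg, and the blue diagonals $[r+4,1]_B,\dots,[r+t+3,1]_B$ give the $t$-leg. Declare this the "$0$-th slice" $\{0\}\times T_{r,s,t}$ of $\mathbb Z T_{r,s,t}$, and then extend by forcing $\tau$-equivariance: the image of $\tau^{-m}$ applied to a diagonal in $\Pi_{r,s,t}\vert_1$ is the corresponding vertex in the $m$-th slice. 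Since $\tau$ on $\Pi_{r,s,t}$ is the rotation $[i,j]_c\mapsto[i-1,j-1]_c$, every element of $\Pi_{r,s,t}$ is uniquely $\tau^{-m}$ of an element of $\Pi_{r,s,t}\vert_1$, so the map is a well-defined bijection onto $\mathbb Z T_{r,s,t}$, and it descends to the quotient by $\tau^{-(n+3)}$ because a full rotation of $\Pi$ is the identity. It then remains to check that the three types of minimal clockwise rotation in Section~\ref{rotations} correspond exactly to the arrows of $\mathbb Z T_{r,s,t}$: the rotations $[k,l]_c\to[k,l+1]_c$ and $[k,l]_c\to[k+1,l]_c$ account for moving along a leg or between consecutive slices, and the mesh-shaped diagram around $[k,k+r+2]_P$ reproduces precisely the branching arrows at the trivalent vertex of $T_{r,s,t}$ together with its $\tau$-shift. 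This is a finite check on the local picture, made concrete in Figures~\ref{D4} and~\ref{ARE6}.

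For the symmetric case $s=t$ the same strategy works but the target is $\mathbb Z T_{r,t,t}/\tau^{-(n+3)}\rho$ rather than $\mathbb Z T_{r,t,t}/\tau^{-(n+3)}$, and one must be careful on the single-coloured $\tau$-orbits, which (as noted in Section~\ref{stabtrans}) have length $2(n+3)$ rather than $n+3$ because $\Gamma^{n+3}_{r,t,t}$ sits on a M\"obius strip when $\Pi$ is odd-sided. Here the modified $\tau$ from Section~\ref{rhotau}, which composes the rotation with the colour-and-orientation swap $\rho^{(n+3)}$ on the diagonals in $\Pi_{r,t,t}\vert_1$, is designed so that going around the cylinder once induces exactly the automorphism $\rho$ of $T_{r,t,t}$ that interchanges the two equal legs. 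So the bijection onto $\mathbb Z T_{r,t,t}$ is built the same way, and it descends to the quotient by $\tau^{-(n+3)}\rho$ precisely because a full geometric rotation corresponds to the combinatorial operation $\tau^{-(n+3)}$ followed by the leg-swap $\rho$. The compatibility of minimal clockwise rotations with arrows is as before, with the extra clause in Section~\ref{rotations} (simultaneous change of colour and orientation inside $\tau(\Pi_{r,t,t}\vert_1)$) matching the $\rho$-twist.

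The main obstacle I expect is the symmetric/odd-sided bookkeeping: verifying that the $\rho$-twist in the definition of $\tau$ and the extra clause in the definition of minimal clockwise rotation are mutually consistent and that together they make the candidate map both well-defined on the M\"obius quotient and arrow-preserving. Concretely, one must confirm that a single-coloured diagonal really does need two full turns to return to itself while a paired diagonal needs only one, and that this matches the orbit structure of $\tau$ on $\mathbb Z T_{r,t,t}$ under the automorphism $\rho$ (which fixes the $r$-leg pointwise and swaps the two $t$-legs). Everything else — connectivity, local finiteness, counting arrows in and out of each vertex — is either already supplied by Lemma~\ref{propT} or is an immediate consequence of the explicit form of the three minimal rotations. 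Once the vertex bijection and the arrow correspondence are in place, the fact that it is an isomorphism of stable translation quivers is automatic, since both sides are stable translation quivers and $\tau$-equivariance has been built in from the start.
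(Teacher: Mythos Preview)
Your proposal is correct and arrives at the same conclusion the paper needs, namely the explicit identifications
\[
\Gamma^{n+3}_{r,s,t}\;\cong\;\mathbb Z T_{r,s,t}/\tau^{-(n+3)},
\qquad
\Gamma^{n+3}_{r,t,t}\;\cong\;\mathbb Z T_{r,t,t}/\tau^{-(n+3)}\rho,
\]
after which composing with the hypothesis $\Gamma\cong\mathbb Z T_{r,s,t}/\tau^{-(n+3)}$ (resp.\ the $\rho$-twisted quotient) finishes the argument.

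The paper's own proof is much terser: it simply refers back to the proof of Lemma~\ref{propT}, where the quiver $\Gamma^{n+3}_{r,s,t}$ was analysed not slice-by-slice but colour-by-colour. There the red and blue halves $\Gamma^{n+3}_{r,t,t}\vert_R$ and $\Gamma^{n+3}_{r,t,t}\vert_B$ are each identified with $\mathbb Z A_{r+t+1}/\tau^{-(n+3)}$, and the pairing of diagonals is interpreted as gluing these two type-$A$ cylinders along $r$ disjoint $\tau$-orbits, which visibly produces $\mathbb Z T_{r,t,t}/\tau^{-(n+3)}$ (with the M\"obius twist handled by the adjusted $\tau$ in the odd-sided case). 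Your approach instead fixes the single slice $\Pi_{r,s,t}\vert_1\cong T_{r,s,t}$ and propagates it by $\tau$-equivariance. Both constructions yield the same isomorphism; yours is more explicit about the vertex bijection and makes the M\"obius bookkeeping in the symmetric odd case more transparent, while the paper's gluing picture explains in one stroke why the local arrow pattern is that of $\mathbb Z T_{r,s,t}$ (two copies of the $A$-type mesh identified along a band) and avoids the separate verification of the three rotation rules against the arrows of $\mathbb Z T_{r,s,t}$.
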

\begin{proof}
The claim follows from the proof of Lemma \ref{propT}, since
we saw that the following are 
isomorphisms of stable translation quivers:
$\Gamma^{n+3}_{r,s,t}\stackrel{\simeq}{\rightarrow} \mathbb Z T_{r,s,t}/\tau^{-(n+3)}$,
$\Gamma^{n+3}_{r,t,t}\stackrel{\simeq}{\rightarrow} \mathbb Z T_{r,t,t}/\tau^{-(n+3)}\rho^{(n+3)}$
for all integers $r,s,t$ and $n$ as above.
\end{proof}

\subsection{Projections}\label{Projections}

Our next goal is to define a translation quiver $\Gamma^{n+3}_{r,t}$ obtained 
from ${\Gamma}^{n+3}_{r,t,t}$ associated to a symmetric tree $T_{r,t,t}$
after folding ${\Gamma}^{n+3}_{r,t,t}$
along its central line.

For this consider again the graph automorphism $\rho$ induced by 
the map  simultaneously changing colour and orientation of the diagonals in $\Pi_{r,t,t}$.
Then the vertices of $\Gamma^{n+3}_{r,t}$ are the 
$\rho$-orbits of vertices of $\Gamma^{n+3}_{r,t,t}$, i.e. the pairs
$\{ [i,i+j]_R, [i+j,i]_B \}$, for $i,j$ in vertices of $\Pi$.
The arrows in $\Gamma^{n+3}_{r,t}$ are always single and 
coincide with minimal clockwise rotation around a common vertex of $\Pi$
linking pairs of coloured oriented diagonals. 
The translation on $\Gamma^{n+3}_{r,t}$ is induced from the translation in $\Gamma^{n+3}_{r,t,t}$
and given by the anti clockwise rotation through $\frac{2\pi}{n+3}$ 
around the center of $\Pi$. 

Clearly, $\Gamma^{n+3}_{r,t,t}\rightarrow \Gamma^{n+3}_{r,t}$ is a surjective map 
of stable translation quivers.

\subsection{Cluster categories of type $E_6$, $E_7$ and $E_8$}

As a corollary of Theorem \ref{isoar} we obtain the geometrical modelling of cluster
categories $\mathcal{C}_{T_{r,s,t}}$ where $T_{r,s,t}$ is an arbitrary tree.
Since the number of connected components of the AR-quiver of $\mathcal{C}_{T_{r,t,t}}$ 
varies with the shape of $T_{r,t,t}$, we proceed considering two cases. In this section
we focus on tree graphs of Dynkin type, in Section \ref{gen} the general case will be treated.

In view of Theorem \ref{isoE6} below let
$\mathcal{C}^{n+3}_{r,s,t}$ be the additive category generated by the
mesh category of $\Gamma^{n+3}_{r,s,t}$, for $r,s,t,n\in\N$.
Then we can show the main result of this section.

\begin{thm}\label{eqofcat}\label{isoE6} 
We have the following equivalences of additive categories
\begin{align*}
&\mathcal{C}^7_{1,2,2}\rightarrow \mathcal{C}_{E_6} \\
&\mathcal{C}^{10}_{1,2,3}\rightarrow \mathcal{C}_{E_7}\\
&\mathcal{C}^{16}_{1,2,4}\rightarrow \mathcal{C}_{E_8}.
\end{align*}
\end{thm}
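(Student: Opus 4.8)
The plan is to reduce each of the three equivalences to Theorem~\ref{Happel2} and Theorem~\ref{isoar} by identifying the relevant orbit quiver explicitly. Recall that $\mathcal{C}_{E_m}$ is by definition $\mathcal{D}^b(\mathrm{mod}\,kE_m)/\tau^{-1}\Sigma$, so by the fundamental properties of orbit categories quoted in Section~2 its AR-quiver is $(\mathbb{Z} E_m/(\tau^{-1}\Sigma),\tau)$, and $\mathcal{C}_{E_m}\simeq \mathrm{ind}\,(\text{mesh category of this quiver})$ additively. Thus it suffices to prove two things for each case: first, that the stable translation quiver $\Gamma^{n+3}_{r,s,t}$ in question is isomorphic to $\mathbb{Z} T_{r,s,t}/(\tau^{-1}\Sigma)$; second, that under the identification of Dynkin diagrams $E_6\cong T_{1,2,2}$, $E_7\cong T_{1,2,3}$, $E_8\cong T_{1,2,4}$ one has $\mathbb{Z} T_{r,s,t}/(\tau^{-1}\Sigma)\cong\mathbb{Z} E_m/(\tau^{-1}\Sigma)$. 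The second point is immediate once the first is set up, since it only uses the coincidence of the underlying graphs and of the induced $\Sigma$-action recalled in Section~\ref{action}.

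The core computation is identifying the $\tau^{-1}\Sigma$-orbit structure. By Section~\ref{action}, on $\mathbb{Z} E_7$ the functor $\Sigma$ acts as $\tau^{-9}$, so $\tau^{-1}\Sigma = \tau^{-10}$ and hence $\mathbb{Z} E_7/(\tau^{-1}\Sigma) = \mathbb{Z} E_7/\tau^{-10}= \mathbb{Z} T_{1,2,3}/\tau^{-(10+0)}$; since here $s=2\neq 3=t$ the tree $T_{1,2,3}$ is asymmetric, and Theorem~\ref{isoar} (the case $\Gamma\cong\mathbb{Z} T_{r,s,t}/\tau^{-(n+3)}$ with $n+3=10$, i.e. $n=7\geq\max\{r+t+1,r+s+1\}=\max\{5,4\}=5$) gives the isomorphism of stable translation quivers $\Gamma^{10}_{1,2,3}\rightarrow AR(\mathcal{C}_{E_7})$. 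The same argument with $\Sigma = \tau^{-15}$ on $\mathbb{Z} E_8$ gives $\tau^{-1}\Sigma=\tau^{-16}$, $n+3=16$, $n=13\geq\max\{6,4\}$, $T_{1,2,4}$ asymmetric, so $\Gamma^{16}_{1,2,4}\rightarrow AR(\mathcal{C}_{E_8})$. The $E_6$ case is the one genuinely using the symmetric machinery: on $\mathbb{Z} E_6$ the functor $\Sigma$ acts as $\rho\tau^{-6}$ with $\rho$ the order-two diagram automorphism, so $\tau^{-1}\Sigma=\rho\tau^{-7}$, and $E_6\cong T_{1,2,2}$ is symmetric with the diagram automorphism exchanging the two length-two legs. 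Hence $\mathbb{Z} E_6/(\tau^{-1}\Sigma)=\mathbb{Z} T_{1,2,2}/\tau^{-7}\rho$, which is exactly the form $\mathbb{Z} T_{r,t,t}/\tau^{-(n+3)}\rho$ with $n+3=7$, $r=1$, $t=2$, $n=4\geq\max\{r+t+1,r+t+1\}=4$; Theorem~\ref{isoar} then yields $\Gamma^7_{1,2,2}\rightarrow AR(\mathcal{C}_{E_6})$. One must check that the $\rho$ appearing in the Happel description of $\Sigma$ on $\mathbb{Z} E_6$ and the $\rho$ built into the definition of $\tau$ on $\Pi_{1,2,2}$ (Section~\ref{rhotau}) are the same order-two automorphism under the graph identification; this is where the odd-sidedness of the heptagon and the simultaneous colour/orientation swap in the definition of $\tau$ and of minimal clockwise rotations on $\Pi_{r,t,t}\vert_1$ do their work, matching the Möbius-strip shape of $\Gamma^7_{1,2,2}$ noted in Section~\ref{stabtrans}.

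Once the AR-quivers are matched as stable translation quivers, passing to the additive categories is formal: $\mathcal{C}^{n+3}_{r,s,t}$ is defined as the additive category generated by the mesh category of $\Gamma^{n+3}_{r,s,t}$, and an isomorphism of stable translation quivers induces an equivalence of their mesh categories, hence of the additive categories they generate; on the other side the mesh category of $AR(\mathcal{C}_{E_m})$ is equivalent to $\mathrm{ind}\,\mathcal{C}_{E_m}$ by the analogue of Theorem~\ref{Happel2} for the orbit category (the projection functor $\mathcal{D}\to\mathcal{C}_{E_m}$ being a triangle functor preserving AR-triangles, so meshes go to meshes). Composing gives the desired equivalences $\mathcal{C}^7_{1,2,2}\to\mathcal{C}_{E_6}$, $\mathcal{C}^{10}_{1,2,3}\to\mathcal{C}_{E_7}$, $\mathcal{C}^{16}_{1,2,4}\to\mathcal{C}_{E_8}$. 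I expect the main obstacle to be the $E_6$ identification of the two $\rho$'s: one has to verify that the combinatorially-defined colour-and-orientation-reversing involution on heptagon diagonals really corresponds, under the bijection between $\Pi_{1,2,2}\vert_1$ and the vertices of $E_6$, to the nontrivial diagram automorphism governing the $\Sigma$-action, rather than merely to some other order-two symmetry; the asymmetric $E_7$, $E_8$ cases are comparatively routine, reducing directly to the cylinder case of Theorem~\ref{isoar}.
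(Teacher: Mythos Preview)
Your proposal is correct and follows essentially the same route as the paper: reduce to an isomorphism of stable translation quivers by computing the $\tau^{-1}\Sigma$-action on $\mathbb{Z}E_m$ from Section~\ref{action}, invoke Theorem~\ref{isoar}, and then pass to mesh categories. The paper only spells out the $E_7$ case and leaves the others to analogous reasoning, whereas you carry out all three explicitly and flag the genuine subtlety in the $E_6$ case (matching the combinatorial $\rho$ with the diagram automorphism governing $\Sigma$); this is a point the paper's proof glosses over, so your version is if anything more careful.
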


\begin{proof}
Since the full subcategories of indecomposable objects
of the orbit categories we consider are equivalent to the 
mesh category of their AR-quiver, we only have to check that
there is an isomorphism of stable translation quivers
between the AR-quiver of the various orbit 
categories and the quivers of coloured oriented single 
and paired diagonals associated
to $\Pi$.
This isomorphism then
induces the claimed equivalences. 

To do so, the strategy will be to compare the action of 
$\tau$ on $\Gamma^{n+3}_{r,s,t}$ with 
the actions of $\tau$ and of $\Sigma$ on $\mathbb Z Q$
where $Q$ is an orientation of a simply laced Dynkin diagram, 
as described in Section \ref{action}.

Below we treat the case $\mathcal{C}_{E_7}$, the remaining two claims 
can be deduced with a similar reasoning.
From the discussion of Section \ref{action} it follows that the 
AR-quiver of $\mathcal{C}_{E_7}$ is isomorphic to 
the quotient graph $\Z E_7/ \tau^{-10}$. On the other side, by definition
$\mathcal{C}^{10}_{1,5,5}$
is the mesh category of 
$\Gamma^{10}_{1,5,5}$ associated
to a 10-gon and  
$\Gamma^{10}_{1,5,5}\cong\Z E_7/ (\tau^{-10})\cong\tau^{-1}\Sigma$.
\end{proof}

\begin{cor}\label{eq}
We also deduce the following equivalences of additive categories
\begin{align*}
&\mathcal{C}^{r+4}_{r,0,0}\rightarrow \mathcal{C}^{2}_{A_{r+1}} \\
&\mathcal{C}^{r+s+4}_{r,s,0}\rightarrow \mathcal{C}^{2}_{A_{r+s+1}} \\
&\mathcal{C}^{r+t+4}_{r,0,t}\rightarrow \mathcal{C}^{2}_{A_{r+t+1}} \\
&\mathcal{C}^{r+5}_{r,1,1}\rightarrow \mathcal{D}^{b}(\mathrm{mod}D_{r+3})/\tau^{-3}\Sigma. \\
\end{align*}
\end{cor}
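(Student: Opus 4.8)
The plan is to derive Corollary~\ref{eq} directly from Theorem~\ref{isoar}, exactly as Theorem~\ref{isoE6} was derived, by identifying in each case the relevant orbit category with an orbit category of the form $\mathbb Z T_{r,s,t}/\psi$ where $\psi$ is either a power of $\tau$ or such a power composed with $\rho^{(n+3)}$. Concretely, for each of the four equivalences I would first observe that the source category $\mathcal{C}^{n+3}_{r,s,t}$ is by definition the additive category generated by the mesh category of $\Gamma^{n+3}_{r,s,t}$, and that the target category is an orbit category $\mathcal{D}^b(\mathrm{mod}\,kQ)/G$ which is additively finite Krull--Schmidt with AR-translation induced from $\mathcal{D}$. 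So it suffices, by Theorem~\ref{isoar} together with the fact that an additively finite Krull--Schmidt category is equivalent to the mesh category of its AR-quiver, to check that the AR-quiver of the target is isomorphic as a stable translation quiver to $\mathbb Z T_{r,s,t}/\tau^{-(n+3)}$ (asymmetric case) or $\mathbb Z T_{r,t,t}/\tau^{-(n+3)}\rho^{(n+3)}$ (symmetric case), with the right choice of $n$.

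Next I would carry out this AR-quiver identification case by case. For the first three lines the trees $T_{r,0,0}$, $T_{r,s,0}$, $T_{r,0,t}$ are all just type $A$ diagrams: $A_{r+1}$, $A_{r+s+1}$, $A_{r+t+1}$ respectively (a leg of length $0$ contributes no extra vertex beyond the branch point, so the tree degenerates to a path). Hence $\mathbb Z T_{r,0,0}=\mathbb Z A_{r+1}$, etc. By the discussion in Section~\ref{action}, the induced action of $\tau^{-1}\Sigma$ on $\mathbb Z A_{m}$ is a horizontal shift together with the reflection coming from $\Sigma$; squaring it, $(\tau^{-1}\Sigma)^2 = \tau^{-2}\Sigma^2$ acts on $\mathbb Z A_m$ as $\tau^{-(m+3)}$ precisely because $\Sigma^2$ acts as $\tau^{-(m+1)}$ on $\mathbb Z A_m$ (the reflection is an involution, so it disappears upon squaring, and the shift doubles to $m+1$ units, giving $\tau^{-(m+3)}$ after incorporating the two $\tau^{-1}$'s). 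With $m=r+1$ this gives $n+3 = r+4$, i.e. $n = r+1$; since $\max\{r+s+1,r+t+1\}=r+1$ when $s=t=0$ the hypothesis $n\geq\max\{r+s+1,r+t+1\}$ of our construction is met, and likewise $n=r+s+1$ resp. $n=r+t+1$ for the other two lines. Thus $AR(\mathcal{C}^2_{A_{r+1}})\cong \mathbb Z A_{r+1}/\tau^{-(r+4)}$ and Theorem~\ref{isoar} applies. For the fourth line, $T_{r,1,1}$ is the Dynkin-shaped tree $D_{r+3}$ (branch point with two legs of length $1$ and one leg of length $r$ gives $r+3$ vertices in a $D$-shape), and one must compute how $\tau^{-3}\Sigma$ acts on $\mathbb Z D_{r+3}$: by Section~\ref{action}, $\Sigma$ on $\mathbb Z D_{n}$ agrees with $\tau^{-(n-1)}$ possibly composed with an order-two automorphism $\rho$ when $n$ is odd; so $\tau^{-3}\Sigma$ acts as $\tau^{-(r+2)}$, or as $\tau^{-(r+2)}\rho$ when $r+3$ is odd. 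Matching $\mathbb Z D_{r+3}/\tau^{-(r+2)}$ (or its $\rho$-twisted version) against $\mathbb Z T_{r,1,1}/\tau^{-(n+3)}$ (resp. $\mathbb Z T_{r,1,1}/\tau^{-(n+3)}\rho^{(n+3)}$) forces $n+3 = r+5$, i.e. $n=r+2$, which again satisfies $n\geq\max\{r+2,r+2\}$; the parity bookkeeping then matches the $\rho^{(n+3)}$-twist in Theorem~\ref{isoar} with the $\rho$ appearing in the action of $\Sigma$ on $\mathbb Z D_{r+3}$ for odd $r+3$.

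Finally, having produced in each case an isomorphism of stable translation quivers $\Gamma^{n+3}_{r,s,t}\to AR(\text{target})$ via Theorem~\ref{isoar}, I would invoke the principle already used in the proof of Theorem~\ref{isoE6}: such an isomorphism induces an equivalence of mesh categories, hence an equivalence on the full subcategories of indecomposables, hence (passing to additive hulls) the asserted equivalence of additive categories. I expect the main obstacle to be the bookkeeping in the $D$-case: one must carefully track the order-two automorphism $\rho$ on $\mathbb Z D_{r+3}$ (defined only for $r+3$ odd), reconcile it with the automorphism $\rho^{(n+3)}$ built into the symmetric model $\Gamma^{n+3}_{r,t,t}$, and verify that the exponent arithmetic $\tau^{-3}\Sigma \rightsquigarrow \tau^{-(r+2)}$ versus $\tau^{-(n+3)}=\tau^{-(r+5)}$ is consistent once one accounts for the fact that the $\tau$ of the model is the rotation by $2\pi/(n+3)$, whereas the relation defining the orbit category is phrased in terms of the derived-category $\Sigma$; in particular one should double-check that no off-by-one or parity discrepancy arises between the two descriptions. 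The three type-$A$ lines, by contrast, are essentially a one-line consequence of the $\mathbb Z A_m$ computation in Section~\ref{action} and the already-established fact (Section~\ref{def_orbitcat}, and \cite{Lisa2}) that $\mathcal{C}^2_{A_m}$ is modelled by oriented diagonals in an $(m+3)$-gon, so those require only that one spell out the identification of legs of length zero with the degenerate tree.
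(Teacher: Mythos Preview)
Your approach is essentially the paper's own: reduce to identifying the AR-quiver of the target orbit category with $\mathbb Z T_{r,s,t}$ modulo the appropriate automorphism and then invoke Theorem~\ref{isoar}, exactly as in the proof of Theorem~\ref{isoE6}. The paper is terser --- for the first claim it notes that $\rho$ is the identity on paired diagonals so $\Gamma^{r+4}_{r,0,0}$ lies on a cylinder, and for the last claim it simply observes that the single red and blue diagonals at each vertex of $\Pi$ play the role of the two exceptional vertices of $D_{r+3}$ --- but the underlying reasoning is the same as yours.

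One arithmetic slip to correct: in the $D$-case you write that $\tau^{-3}\Sigma$ acts as $\tau^{-(r+2)}$ on $\mathbb Z D_{r+3}$, but it is $\Sigma$ alone that acts as $\tau^{-(r+2)}$ (up to $\rho$); composing with $\tau^{-3}$ gives $\tau^{-(r+5)}$ (up to $\rho$), which is what actually forces $n+3=r+5$. You arrive at the correct value of $n$, so this is just a misstated intermediate line. Your parity check then goes through cleanly: $r+5$ is odd precisely when $r+3$ is odd, so the $\rho^{(n+3)}$ built into the symmetric model matches the $\rho$ in the action of $\Sigma$ on $\mathbb Z D_{r+3}$ in exactly the right cases.
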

\begin{proof}
We follow the proof of Theorem \ref{eqofcat} and observe
that for the first claim
we consider only paired oriented diagonals of
$\Pi$. Since $[i,j]_P\neq[j,i]_P$ and the map $\rho$ is the identity on
paired oriented diagonals, we deduce that $\Gamma^{r+4}_{r,0,0}$ 
always lies on a cylinder.  From Lemma \ref{propT}
we deduce that $\Gamma^{r+4}_{r,0,0}\cong \mathbb Z A_{r+1}/\tau^{-(r+3)}$. 
From the discussion of Section \ref{action} we deduce that 
$\mathbb Z A_{r+1}/\tau^{-(r+3)}\cong\mathbb Z A_{r+1}/\tau^{-2}\Sigma$
which we recognise as the AR-quiver of $\mathcal{D}^b(\mathrm{ mod } k A_{r+1})/\tau^{-2}\Sigma^2$.

The second and third claim follow in a similar fashion. 

For the last claim we observe that
for each vertex of $\Pi$ the quiver $\Gamma^{r+5}_{r,1,1}$ 
has one red and one blue single coloured oriented diagonal. These
correspond to the exceptional vertices
of a Dynkin diagram of type $D_{r+3}$.
\end{proof}

\begin{rem}
The equivalences of Theorem \ref{eqofcat} and Corollary \ref{eq} 
allow us to define a shift functor $\Sigma$ on
the categories associated to $\Pi$
induced by the shift functor
$\Sigma$ defined on the various orbit categories considered above, see also
\cite{Lisa1}. 
In the following however, 
we will not use the triangulated structure of these categories. 
\end{rem}

Let $[i,j]_c$, $c\in\{R,B,P\}$, be a coloured oriented single or paired 
diagonal of a heptagon $\Pi$ and let $(i,j)$ be 
the underlying unoriented diagonal. 
Then we deduce the following useful results.

\begin{cor}\label{corproj}
There is a dense and full functor
\begin{align*}
  \mathcal{C}_{E_6}&\rightarrow\mathcal{C}_{A_{4}}\\
  [i,j]_c&\mapsto(i,j)
\end{align*}
from the cluster category $\mathcal{C}_{E_6}$ of type $E_6$ to 
the cluster category $\mathcal{C}_{A_4}$ of type $A_4$.
\end{cor}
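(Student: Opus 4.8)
The plan is to construct the functor explicitly from the combinatorial model and then verify density and fullness directly from the description of morphisms in the two mesh categories. First I would recall that by Theorem~\ref{eqofcat} we have $\mathcal{C}_{E_6}\simeq\mathcal{C}^7_{1,2,2}$, which is the mesh category of $\Gamma^7_{1,2,2}$ whose vertices are the single coloured oriented and paired diagonals of a heptagon $\Pi$; similarly, by the work recalled in \cite{CCS} (and Corollary~\ref{eq} together with the discussion of Section~\ref{action}), $\mathcal{C}_{A_4}$ is the mesh category of the translation quiver whose vertices are the \emph{unoriented} diagonals of a heptagon, with arrows given by minimal clockwise rotations in the sense of \cite{CCS}. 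So the assignment $[i,j]_c\mapsto(i,j)$ is well defined on objects: it simply forgets colour and orientation, sending $[i,j]_R$, $[i,j]_B$ and the pair $[i,j]_P=\{[i,j]_R,[j,i]_B\}$ all to the underlying diagonal $(i,j)$.

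Next I would promote this to a functor. The key observation is that the definition of minimal clockwise rotation in Section~\ref{rotations} was \emph{modelled} on that of \cite{CCS}: each generating arrow $[k,l]_c\to[k,l+1]_c$ or $[k,l]_c\to[k+1,l]_c$ of $\Gamma^7_{1,2,2}$ maps under $(i,j)\mapsto$ forget to a generating arrow $(k,l)\to(k,l+1)$ or $(k,l)\to(k+1,l)$ of the $A_4$-quiver, while the three-arrow diagram through a paired diagonal, namely $[k,k+r+2]_P\to[k,k+r+3]_R\to[k+1,k+r+3]_P$ and $[k,k+r+2]_P\to[k+r+3,k]_B\to[k+1,k+r+3]_P$, collapses (with $r=1$) to the single rotation $(k,k+3)\to(k+1,k+3)$ composed with itself along two routes. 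So there is an induced functor on the additive path categories; one then checks it sends the mesh relations of $\Gamma^7_{1,2,2}$ to (consequences of) the mesh relations of the $A_4$-quiver, so that it descends to the mesh quotient, i.e. to a functor $\mathcal{C}_{E_6}\to\mathcal{C}_{A_4}$. Here one has to be slightly careful about the colour/orientation twist that enters $\tau$ and the minimal rotations for the odd-sided symmetric case: after forgetting colour and orientation that twist disappears, so on the level of unoriented diagonals it is invisible and the mesh relations still map correctly.

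Density is then immediate on objects: every diagonal $(i,j)$ of the heptagon is hit, for instance by $[i,j]_R$, and since both sides are additive Krull--Schmidt categories additively generated by their indecomposables, every object of $\mathcal{C}_{A_4}$ is a summand of — indeed isomorphic to — the image of an object. For fullness, I would argue morphism space by morphism space: in a mesh category every morphism is a $k$-linear combination of paths, so it suffices to show every minimal clockwise rotation in the $A_4$-quiver lifts to a morphism in $\Gamma^7_{1,2,2}$, which is exactly the content of the arrow-by-arrow correspondence above (a rotation $(k,l)\to(k,l+1)$ lifts by choosing a compatible colour; a rotation into or out of an $(k,k+3)$ lifts through the paired-diagonal diagram). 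Composability is preserved because the target colours can be matched along a path, so any path in the $A_4$-quiver lifts, and hence the map on $\Hom$-spaces is surjective.

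The main obstacle I anticipate is the bookkeeping around fullness rather than any conceptual difficulty: one must make sure that when a path in the $A_4$-quiver passes repeatedly through diagonals of the form $(k,k+3)$ — the ones coming from paired diagonals — a consistent choice of lifts (red vs.\ blue vs.\ paired, and which orientation) can always be made so that the lifted arrows actually compose in $\Gamma^7_{1,2,2}$, and that no nonzero class in $\Hom_{\mathcal{C}_{A_4}}$ fails to be hit because of a mesh relation on the $E_6$ side killing the candidate lift. Checking this amounts to comparing, slice by slice, the shape of $\Gamma^7_{1,2,2}$ (Figure~\ref{ARE6}) against $\mathbb ZA_4/\tau^{-2}\Sigma$ (Figure~\ref{ARA4}) under the folding that forgets colour and orientation; since the latter is precisely the projection $\Gamma^7_{1,2,2}\to\Gamma^7_{1,2}$ of Section~\ref{Projections} followed by the further identification $[i,j]_P\leftrightarrow(i,j)$, the verification is finite and mechanical, and no genuinely new input beyond Theorem~\ref{eqofcat} and the constructions of Section~\ref{sect3} is needed.
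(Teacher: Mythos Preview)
Your proposal is correct and follows essentially the same route as the paper: both identify $\mathcal{C}_{E_6}$ and $\mathcal{C}_{A_4}$ with mesh categories of the respective diagonal quivers and obtain the functor from the forget-colour-and-orientation map, with density and fullness coming from surjectivity at the level of stable translation quivers. The paper packages this more succinctly as the composite $\pi_2\circ\pi_1:\Gamma^7_{1,2,2}\to\Gamma^7_{1,2}\to\Gamma$ of two surjective translation-quiver maps (exactly the factorisation you arrive at in your final paragraph), rather than doing the arrow-by-arrow lifting you spell out first.
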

\begin{proof}
Consider the projection
$\pi_1:\Gamma^7_{1,2,2}\rightarrow \Gamma^7_{1,2}$ 
defined in Subsection \ref{Projections}. Let 
$\Gamma$ be the stable translation quiver
of unoriented diagonals of $\Pi$, as defined in Caldero-Chapoton-Schiffler's paper \cite{CCS}.
Then there is a projection
$\pi_2:\Gamma^7_{1,2}\rightarrow\Gamma$,
which maps $\{ [i,i+j]_R, [i+j,i]_B) \}$
to the unoriented diagonal $(i,i+j)$ of $\Pi$
and pairs of  arrows in $\Gamma^7_{1,2}$ to 
the one corresponding arrow in $\Gamma$.
We get a surjective map of translation quivers
$\pi_2\circ\pi_1:\Gamma^7_{1,2,2}\rightarrow\Gamma.$
This map then induces a dense and full functor
$$\mathcal{C}_{E_6}\rightarrow \mathcal{C}_{A_4},$$
after identifying
$\mathcal{C}_{E_6}$, resp. $ \mathcal{C}_{A_4}$, 
with the additive category generated by
the mesh category of $\Gamma^7_{1,2,2}$, resp. $\Gamma$.
\end{proof}
In Section \ref{cluster e6}
we will use the functor
$\mathcal{C}_{E_6}\rightarrow\mathcal{C}_{A_{4}}$ to describe 
all cluster tilting sets
of $\mathcal{C}_{E_6}$ as configurations of coloured 
oriented diagonals in $\Pi$.

\begin{cor}
There is a dense and full functor
\begin{align*}
  \mathcal{C}^{r+4}_{r,0,0}&\rightarrow\mathcal{C}_{A_{r+1}}\\
  [i,j]_P&\mapsto(i,j)
\end{align*} from the category $\mathcal{C}^{r+4}_{r,0,0}$ to 
the cluster category of type $A_{r+1}$.
\end{cor}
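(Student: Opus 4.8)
The strategy will be to adapt the proof of Corollary \ref{corproj} to the $\mathcal{C}^{r+4}_{r,0,0}$ setting, which is in fact simpler because there is only one colour class in play. Recall from the proof of Corollary \ref{eq} that when $s=t=0$ the quiver $\Gamma^{r+4}_{r,0,0}$ consists entirely of paired oriented diagonals, that $\rho$ acts as the identity on these, and that $\Gamma^{r+4}_{r,0,0}\cong \mathbb Z A_{r+1}/\tau^{-(r+3)}$ lies on a cylinder. Under the equivalence of Corollary \ref{eq} this is precisely the AR-quiver of $\mathcal{C}^2_{A_{r+1}}$ realised by oriented diagonals of the $(r+4)$-gon, as recalled in Subsection \ref{quivegamma} (cf. Figure \ref{ARA4}).

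First I would define the map on quivers. Let $\Gamma_{A_{r+1}}$ be the Caldero-Chapoton-Schiffler stable translation quiver of unoriented diagonals of $\Pi$ from \cite{CCS}, so that $\mathcal{C}_{A_{r+1}}$ is identified with the additive category generated by its mesh category. Define $\pi:\Gamma^{r+4}_{r,0,0}\to\Gamma_{A_{r+1}}$ by sending the paired diagonal $[i,j]_P$ to the underlying unoriented diagonal $(i,j)$, and sending each minimal clockwise rotation $[k,l]_P\to[k,l+1]_P$, $[k,l]_P\to[k+1,l]_P$ to the corresponding minimal rotation of unoriented diagonals in $\Gamma_{A_{r+1}}$. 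Since $[i,j]_P=[j,i]_P$ (the pair is unordered) while $(i,j)=(j,i)$, this is well defined on vertices; and because the minimal rotations in $\Gamma^{r+4}_{r,0,0}$ for $s=t=0$ involve only the first two types (rotation of the head and rotation of the tail), with no diagram of the third type occurring, the map on arrows is a genuine map of quivers. One checks it is surjective on vertices and on arrows, and that it intertwines the two translations, both of which are the anticlockwise rotation by $\tfrac{2\pi}{r+4}$; hence $\pi$ is a surjective morphism of stable translation quivers.

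Next I would pass to mesh categories: a surjective morphism of stable translation quivers induces a dense full functor between the associated mesh categories (it is surjective on objects, hence dense; it is surjective on the generating arrows and compatible with the mesh relations, hence full), and therefore a dense full additive functor $\mathcal{C}^{r+4}_{r,0,0}\to\mathcal{C}_{A_{r+1}}$ acting by $[i,j]_P\mapsto(i,j)$, exactly as in the proof of Corollary \ref{corproj}. The main point to verify carefully — and the only place where something could go wrong — is that no minimal rotation of the third type (the one through a red/blue pair into a new paired diagonal) ever arises in $\Gamma^{r+4}_{r,0,0}$: this is immediate since for $r+3$ the longest paired diagonals $[i,i+r+2]_P$ are the only ones present and the ``diamond'' diagram of Subsection \ref{rotations} requires single red and blue diagonals of the form $[k,k+r+3]_R$ and $[k+r+3,k]_B$, which do not belong to $\Pi_{r,0,0}$. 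Granting this, all arrows of $\Gamma^{r+4}_{r,0,0}$ are rotations about a common vertex of $\Pi$, their images are the corresponding CCS-rotations, and the argument closes.
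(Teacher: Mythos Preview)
Your approach is correct and mirrors the paper's implicit argument: the paper's proof is the one-liner ``Follows from the proof of Corollary \ref{eq}'', and what you spell out is exactly the construction that Corollary \ref{eq} together with the pattern of Corollary \ref{corproj} yields.

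There is, however, one factual slip. You write that ``$[i,j]_P=[j,i]_P$ (the pair is unordered)''. This contradicts the paper: Section \ref{pairs} explicitly remarks that $[i,j]_P\neq[j,i]_P$, and the very proof of Corollary \ref{eq} you invoke uses this to conclude that $\Gamma^{r+4}_{r,0,0}$ lies on a cylinder with $(r+1)(r+4)$ vertices, twice the number of unoriented diagonals of the $(r+4)$-gon. Concretely, $[i,j]_P=\{[i,j]_R,[j,i]_B\}$ whereas $[j,i]_P=\{[j,i]_R,[i,j]_B\}$, and these are distinct elements of $\Pi_{r,0,0}$. Consequently your map $\pi$ is not a bijection on vertices but a $2$-to-$1$ surjection of stable translation quivers, realising precisely the covering $\mathcal{C}^2_{A_{r+1}}\to\mathcal{C}_{A_{r+1}}$ that the paper has in mind. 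Fortunately this does not affect your conclusion: well-definedness of $[i,j]_P\mapsto(i,j)$ is automatic (each paired diagonal has a unique underlying unoriented diagonal), and a $2$-to-$1$ surjective morphism of stable translation quivers still induces a dense and full functor between the associated mesh categories. Once you correct that sentence, the rest of your argument---including the observation that the diamond-type rotation never occurs because $\Pi_{r,0,0}$ contains no single-coloured diagonals---goes through unchanged.
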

\begin{proof}
Follows from the proof of Corollary \ref{eq}.
\end{proof}

\subsection{Coloured oriented single and paired diagonals
and the cluster category of type $D_n$}

The aim of this section is to link our category of coloured oriented single and paired diagonals
to the cluster category of type $D_n$, denoted by $\mathcal{C}_{D_n}$.
We recall that $\mathcal{C}_{D_n}$ can be modelled 
geometrically in two equivalent ways. One approach 
arises categorifying the model given by Fomin-Zelevinsky
in \cite[Prop. 3.16]{FZ_y} defined in terms of unoriented 
diameters and pairs of diagonals in a regular $2n$-gon.  
The second approach uses tagged arcs in a once punctured disc, 
as described by Schiffler in \cite{S}. We can recover both descriptions from our model. \newline

For the first one, we proceed as follows.
Let $(\Gamma_{n-3,n-1,n-1}^{2n}, \tau)$ be as before and 
define the quiver $\Gamma_{D_n}$ having as vertices 
the (unordered) triples of centrally symmetric diagonals of $\Pi$:
\begin{align*}\itemsep0em
&\{[i,i+2]_P, [i+2+n,i+n]_R, [i+n,i+2+n]_B\},\\
&\{[i,i+3]_P, [i+3+n,i+n]_R, [i+n,i+3+n]_B\},\\
&\dots\\
&\{[i,i+n-1]_P, [i+2n-1,i+n]_R, [i+n,i+2n-1]_B\}
\end{align*}
together with the oriented single 
diagonals $[i,n+i]_R, [n+1,i]_B$ of $\Pi$, for $1\leq i\leq2n.$ 
Notice that  $[i,n+i]_R $ and $[n+1,i]_B$ are the central oriented
diagonals of  $\Pi.$  The arrows of $\Gamma_{D_n}$ are induced by the minimal clockwise rotations between diagonals of $\Pi_{n-3,n-1,n-1}$, similarly for the translation map. This allows us to obtain a surjective map of stable translation quivers $\Gamma_{n-3,n-1,n-1}^{2n}\rightarrow \Gamma_{D_n}$. Dropping the orientations of all coloured diagonals we also obtain the surjective map $\Gamma_{D_n}\rightarrow AR(\mathcal {C}_{D_{n}})$. It follows that there is a dense and full functor $$\mathcal{C}_{n-3,n-1,n-1}^{2n}\rightarrow\mathcal {C}_{D_{n}}$$ from the mesh category of $\Gamma_{n-3,n-1,n-1}^{2n}$, $\mathcal{C}_{n-3,n-1,n-1}^{2n}$, to the cluster category $\mathcal {C}_{D_{n}}.$ In addition, we observe that $\Gamma_{D_n}\cong\mathbb{Z}D_n/(\tau^{-1}\Sigma)^2\cong AR(\mathcal {C}^2_{D_{n}})$, where $\mathcal {C}^2_{D_{n}}$ is the 2-repetitive cluster category of type $D_n$ defined in Section \ref{def_orbitcat}.\newline 

To recover Schiffler's model we need to use a smaller punctured polygon and allow non-contractible loops. Defining $\Gamma^{n}_{n-3,1,1}$ and $\mathcal{C}^{n}_{n-3,1,1}$ as before we obtain an isomorphism of stable translation quivers $\Gamma_{n-3,1,1}^{n}\stackrel{\cong}{\rightarrow}AR(\mathcal{C}_{D_{n}})$ and the desired equivalence of categories: $\mathcal{C}^{n}_{n-3,1,1}\stackrel{\simeq}{\rightarrow}\mathcal{C}_{D_{n}}.$ In Figure \ref{ARC_d} we illustrate the quiver $\Gamma^4_{1,1,1}$ in a regular punctured square.\begin{figure}[H]
\psfragscanon
 \psfrag{1}[][][0.5]{$1$}
  \psfrag{2}[][][0.5]{$2$}
  \psfrag{3}[][][0.5]{$3$}
  \psfrag{4}[][][0.5]{$4$}
\includegraphics[scale=0.75]{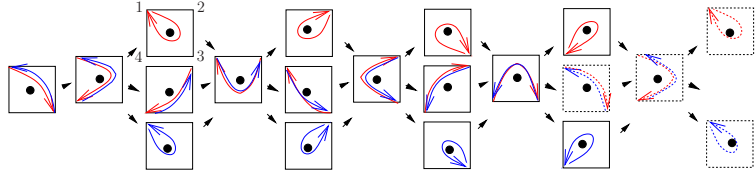}
\caption{The quiver $\Gamma^4_{1,1,1}\cong AR(\mathcal{C}_{D_{4}})$ in a square.}
\label{ARC_d}
\end{figure}

\subsection{Cluster categories associated to trees not of Dynkin type}\label{gen}

When $T_{r,s,t}$ is not an orientation of a simply laced Dynkin graph the AR-quiver 
of the cluster category $\mathcal{C}_{T_{r,s,t}}$ splits into various connected
components. 
Our next goal is to obtain an isomorphism between the irregular (transjecctive) component
of the AR-quiver of $\mathcal{C}_{T_{r,s,t}}$
and the quiver of coloured oriented single and paired diagonals.

For this we extend the construction of Section \ref{pairs} and consider
and infinite-sided polygon $\Pi^\infty$. Next, 
let $\Pi^\infty_{r,s,t}$ be the subset of all coloured oriented single and paired
diagonals of $\Pi^\infty$ consisting 
of $r+1$ paired, $s$ red and $t$ blue single coloured oriented
diagonals for every vertex of $\Pi^\infty$:
\begin{align*}\itemsep0em
\Pi_{r,s,t}:=\bigg\{&[i,i+2]_P, \dots,[i,i+r+2]_P, \\
&[i,i+r+3]_R,\dots,[i,i+r+s+2]_R, \\
&[i+r+3,i]_B, \dots,[i+r+t+2,i]_B,\hspace{0,3cm} \textrm{ $i$ vertex  of }\Pi^\infty\bigg\}.
\end{align*}

Then we extend the definition of minimal clockwise rotations of Section \ref{rotations} 
to this setting. Thus we obtain a quiver $\Gamma^\infty_{r,s,t}$,
whose vertices are the elements of $\Pi^\infty_{r,s,t}$ and where we
link two vertices with an arrow when 
there is a minimal clockwise rotation between them. 
Moreover, we define $\tau$ on the elements
of $\Pi^\infty_{r,s,t}$ as the anticlockwise rotation around the center of $\Pi^\infty$
induced by the rotation though $\frac{2\pi}{n+3}$ in an $(n+3)$-gon $\Pi$ letting $n\rightarrow\infty$.
In this way we turn $(\Gamma^\infty_{r,s,t},\tau)$ into a stable translation quiver.

To state the next result, let $\mathcal{P}$ be the full subcategory of $\mathcal{C}_{T_{r,s,t}}$,
consisting of $\tau$-shifts of indecomposable projective  objects in $\mathcal{C}_{T_{r,s,t}}$. 
Moreover, denote the additive
category generated by the mesh category
of $(\Gamma^\infty_{r,s,t},\tau)$ by $\mathcal{C}^\infty_{r,s,t}$.
Then we can show the following result.

\begin{prop}\label{isoinfty}
The functor 
$$\varphi:\mathcal{C}^\infty_{r,s,t}\rightarrow \mathcal{P}$$
is an equivalence of additive categories. 
\end{prop}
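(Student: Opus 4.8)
The plan is to reduce the statement, exactly as was done for the Dynkin cases in Theorem \ref{eqofcat}, to an isomorphism of stable translation quivers and then invoke the mesh-category description of the transjective component. First I would record that, since $T_{r,s,t}$ is not of Dynkin type, Theorem \ref{thm presentation derived cat} tells us that the transjective (``irregular'') component of $AR(\mathcal{D}^b(\mathrm{mod}\,kT_{r,s,t}))$ is isomorphic as a stable translation quiver to $\mathbb Z T_{r,s,t}$, and under the orbit functor $\pi_1:\mathcal{D}\rightarrow\mathcal{C}_{T_{r,s,t}}$ this component maps to a connected component $\Gamma_{\mathrm{tr}}$ of $AR(\mathcal{C}_{T_{r,s,t}})$. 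Because $F=\tau^{-1}\Sigma$ acts on the transjective component of $\mathcal{D}$ without fixed points and the component is a full $\mathbb Z T_{r,s,t}$, the quotient is again a stable translation quiver; the indecomposable objects of $\mathcal{P}$ (the $\tau$-shifts of the indecomposable projectives) are precisely the vertices of $\Gamma_{\mathrm{tr}}$, since the projectives all lie in the transjective component and $\tau$ acts as the horizontal shift there. Hence $\mathcal{P}$ is equivalent to the additive category generated by the mesh category of $(\Gamma_{\mathrm{tr}},\tau)$, by the same argument (Theorem \ref{Happel2} and its orbit-category analogue) used in the proof of Theorem \ref{eqofcat}.

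Next I would produce the isomorphism of stable translation quivers $\Gamma^\infty_{r,s,t}\stackrel{\cong}{\longrightarrow}\Gamma_{\mathrm{tr}}$. On the combinatorial side, letting $n\to\infty$ in the construction of Section \ref{pairs} the ``wrap-around'' identification $\tau^{-(n+3)}$ (or $\tau^{-(n+3)}\rho$) disappears, so the argument in the proof of Lemma \ref{propT}---gluing $\Gamma^\infty_{r,s,t}|_R$ and $\Gamma^\infty_{r,s,t}|_B$, each isomorphic to a copy of $\mathbb Z A_{r+t+1}$ in the symmetric case (resp. the asymmetric version of this), along $r$ disjoint $\tau$-orbits of paired diagonals---now yields directly $\Gamma^\infty_{r,s,t}\cong\mathbb Z T_{r,s,t}$ with no quotient, matching $\Gamma_{\mathrm{tr}}$. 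Concretely I would send the fan $\Pi^\infty_{r,s,t}\vert_1$ of diagonals out of the vertex $1$ to the slice of $\mathbb Z T_{r,s,t}$ indexed by the projectives (using the bijection, noted in Section \ref{pairs}, between $\Pi_{r,s,t}\vert_1$ and the vertices of $T_{r,s,t}$), check that the minimal clockwise rotations of Section \ref{rotations} correspond to the arrows of $\mathbb Z T_{r,s,t}$ at that slice, and then extend to all of $\Gamma^\infty_{r,s,t}$ by $\tau$-equivariance, since $\tau$ is the anticlockwise rotation of $\Pi^\infty$ on one side and the horizontal shift of $\mathbb Z T_{r,s,t}$ on the other. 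That the three-way rotation diagram among $[k,k+r+2]_P$, $[k,k+r+3]_R$, $[k+r+3,k]_B$, $[k+1,k+r+3]_P$ reproduces exactly the local mesh configuration at the trivalent vertex of $T_{r,s,t}$ is the content that makes this a quiver isomorphism rather than merely a bijection on vertices.

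Finally, an isomorphism of stable translation quivers induces an equivalence of the associated mesh categories, hence of the additive categories they generate, so $\varphi:\mathcal{C}^\infty_{r,s,t}\rightarrow\mathcal{P}$ is an equivalence. I expect the main obstacle to be the bookkeeping in the non-Dynkin \emph{and} non-symmetric cases: one must make sure that the infinite-polygon analogue of Lemma \ref{propT} still produces a single connected $\mathbb Z T_{r,s,t}$ (not several components) and that the identification of the projective slice is compatible with the orientation of $T_{r,s,t}$ so that $\tau$ on both sides is genuinely the same map; once the quiver isomorphism is pinned down at one slice and propagated by $\tau$, the passage to mesh categories is formal and parallels the proof of Theorem \ref{eqofcat} verbatim.
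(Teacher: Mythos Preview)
Your reduction to the isomorphism $\Gamma^\infty_{r,s,t}\cong\mathbb Z T_{r,s,t}$, and your explicit construction of it by sending the slice $\Pi^\infty_{r,s,t}|_1$ to the projective slice and extending by $\tau$-equivariance, match the paper's opening move. The gap is in the step you call ``formal'': asserting that $\mathcal P$ is equivalent to the additive category generated by the mesh category of $(\Gamma_{\mathrm{tr}},\tau)$ by appeal to Theorem~\ref{Happel2} ``and its orbit-category analogue.'' Theorem~\ref{Happel2} is stated only for Dynkin $Q$, and its orbit-category analogue does not transfer verbatim to the non-Dynkin setting treated here. Concretely, for transjective $X,Y$ in $\mathcal C_{T_{r,s,t}}$ one has
\[
\mathrm{Hom}_{\mathcal C}(X,Y)\;=\;\mathrm{Hom}_{\mathcal D}(X,Y)\ \oplus\ \mathrm{Hom}_{\mathcal D}(X,FY),\qquad F=\tau^{-1}\Sigma,
\]
and the second summand, which by Serre duality equals $D\,\mathrm{Hom}_{\mathcal D}(Y,\tau^{2}X)$, need not vanish (take for instance $X=\tau^{-2}P_i$ and $Y=P_i$). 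The mesh category of $\mathbb Z T_{r,s,t}$ only produces forward paths and therefore sees the first summand but not, a priori, the second; so the implication ``isomorphism of translation quivers $\Rightarrow$ equivalence of categories'' is not automatic here, in contrast to the Dynkin situation of Theorem~\ref{eqofcat}. The obstacle you anticipated (bookkeeping for the quiver isomorphism) is in fact harmless; the real issue is this Hom-space comparison, which you dismissed as parallel to the Dynkin case.

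The paper does not invoke Happel's theorem at this point; instead it argues directly that $\varphi$ is full, faithful and dense by comparing specific Hom-spaces, namely $\mathrm{Hom}_{\mathcal C^\infty_{r,s,t}}(\tau(D_j),D_i)$ against $\mathrm{Hom}_{\mathcal C_{T_{r,s,t}}}(I_j,\Sigma P_i)$, and then asserts density and faithfulness separately. To complete your approach you would need to replace the mesh-category shortcut with such a direct check that morphisms in $\mathcal P$ arising from the $FY$-summand are also in the image of $\varphi$; this is precisely where the non-Dynkin case requires work beyond the Dynkin template.
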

\begin{proof}
First we observe that $\Gamma^\infty_{r,s,t}\cong \Z T_{r,s,t}$.

Next, let $\Pi^\infty_{r,s,t}\vert_1$ be the subset of $\Pi^\infty_{r,s,t}$ consisting of the 
$r+s+t+1$ single and paired diagonals:
$\{[1,3]_P, \dots,[1,r+3]_P, 
[1,r+4]_R,\dots,[1,r+s+3]_R, 
[r+4,1]_B, \dots,[r+t+3,1]_B \}$.
Then, we observe that since $\Gamma^\infty_{r,s,t}$ is a connected stable translation quiver
$\varphi$ induces an injective map between $\mathrm{Hom}_{C^\infty_{r,s,t}}(\tau(D_j),D_i)$,
for $D_i,D_j\in \Pi^\infty_{r,s,t}$ and $\mathrm{Hom}_{C_{T_{r,s,t}}}(I_j,\Sigma(P_i))$
between the indecomposable injective objects
and $\Sigma$ of the projective objects in $\mathcal{C}_{T_{r,s,t}}$. Thus $\varphi$ is full.
Moreover, it is not hard to see that $\varphi$ is dense and faithful, thus
$\varphi$ is indeed an equivalence of additive categories.
\end{proof}

\section{Combinatorics of the cluster category $\mathcal{C}_{E_6}$}\label{cluster e6}

Our next aim is to describe the combinatorics of the cluster category
$\mathcal{C}_{E_6}\cong\mathcal{C}_{1,2,2}$ 
inside a heptagon. 

Throughout the chapter let $\Pi$ be 
a regular heptagon and let 
$(\Gamma,\tau):=(\Gamma^7_{1,2,2},\tau)$. Moreover, let
$\Gamma_{\Pi}$ be the stable translation quiver having
as vertices the unoriented diagonals of $\Pi$ and arrows given
by minimal clockwise rotations, see \cite{CCS} for details.

\subsection{Extension spaces in $\mathcal{C}_{E_6}$}\label{start_end}

The support of $\mathrm{Hom}(\tau^{-1}X,-)$ in 
$\mathcal{C}_Q$
is called the {\em front $\mathrm{Ext}$-hammock of $X$}. 
The support of $\mathrm{Hom}(-,\tau X)$ in $\mathcal{C}_Q$
is called the 
{\em back $\mathrm{Ext}$-hammock of $X$}.
These hammocks can be deduced from the AR-quiver
using mesh relations, or using starting and ending functions, 
see ~\cite[Chap. 8]{BMRRT}. For AR-quivers isomorphic to $\mathbb Z Q$ 
with $Q$ an orientation of a Dynkin graph, 
the support of $\mathrm{Hom}(-,-)$ has 
been described in detail in \cite{bongartz}.

Identify again $\mathcal{C}_{E_6}$, resp. $ \mathcal{C}_{A_4}$ with
the additive categories generated by the mesh categories of
$\Gamma$, resp. $\Gamma_{\Pi}$. In the sequel we view the 
hammocks inside $\Gamma$ or $\Gamma_{\Pi}$. Moreover, 
let $\mathrm{Ext}_{\Pi}^1(D_X,D_Y):=\mathrm{Ext}_{\mathcal{C}_{E_6}}^1(X,Y)$,
for coloured oriented diagonals $D_X,$ $D_Y$ in $\Pi$
and indecomposable objects
$X$ and $Y$ in $\mathcal{C}_{E_6}$ corresponding to $D_X$ and $D_Y$
by the equivalence
of Theorem \ref{isoE6}.
\begin{rem}\label{2cy}
As $\mathcal{C}_{E_6}$ is 2 Calabi-Yau:
$D\mathrm{Ext}_{\mathcal{C}_{E_6}}^1(X,Y)\cong\mathrm{Ext}^1_{\mathcal{C}_{E_6}}(X,Y), $
for all objects $X,Y$ in $\mathcal{C}_{E_6}$. Therefore, 
the back and front $\mathrm{Ext}$-hammocks in $\mathcal{C}_{E_6}$ coincide
for all objects. 
On the other hand, in $\mathcal{C}^2_{A_4}$ the back 
and front hammocks are 
disjoint, as the category is not $2$-Calabi-Yau. 
\end{rem}

\subsection{Lift of hammocks}\label{lift}
Consider again the projection
$\widetilde{\pi}:=\pi_2\circ\pi_1:\Gamma\rightarrow \Gamma_{\Pi}$ defined by
$D_{(i,j)}:=\widetilde{\pi}(D_X)$ as in Corollary \ref{corproj}.
For each $D_X$ we define 
two connected sub-quivers of $\Gamma$, $I_1(D_X)$ and $I_2(D_X)$, as follows.
The vertices of both $I_1(D_X)$ and $I_2(D_X)$, are the vertices of $\Gamma$ 
in the $\mathrm{Ext}$-hammock of $D_X$ in $\Gamma$ and in
the preimage under $\widetilde{\pi}$ of the $\mathrm{Ext}$-hammock 
of $D_{(i,j)}$ in $\Gamma_{\Pi}$. 
The arrows of $I_1(D_X)$ and $I_2(D_X)$
coincide with the arrows of $\Gamma$.
Then $I_1(D_X)$ 
contains the vertex $\tau^{-1}(D_X)$ and will be called the
{\em front crossing} of $D_X$, 
$I_2(D_X)$ contains 
$\tau(D_X)$ and will be called the {\em back crossing} of $D_X$.

Note that for all $D_X$, the sub-quivers $I_1(D_X)$ and $I_2(D_X)$ are disjoint.
In addition, all coloured oriented diagonals in $\Pi$ {\em crossing} $D_X$ in an interior point of $D_X$
are vertices of $I_1(D_X)\cup\rho(I_1(D_X))$ and $I_2(D_X)\cup\rho(I_2(D_X))$.
See Figure \ref{Exts}, were the vertices of inside the front and back crossings of $D_X$
are in heptagons with bold boundary, for $D_X$ a coloured oriented diagonal 
in the first slice of $\Gamma$.

\begin{figure}[H]
\begin{center}
\center
\psfragscanon
\psfrag{a}[][][0.75]{a)}
\psfrag{b}[][][0.75]{b)}
\psfrag{c}[][][0.75]{c)}
\psfrag{d}[][][0.75]{d)}
\psfrag{e}[][][0.75]{e)}
\psfrag{f}[][][0.75]{f)}
\includegraphics[scale=0.48]{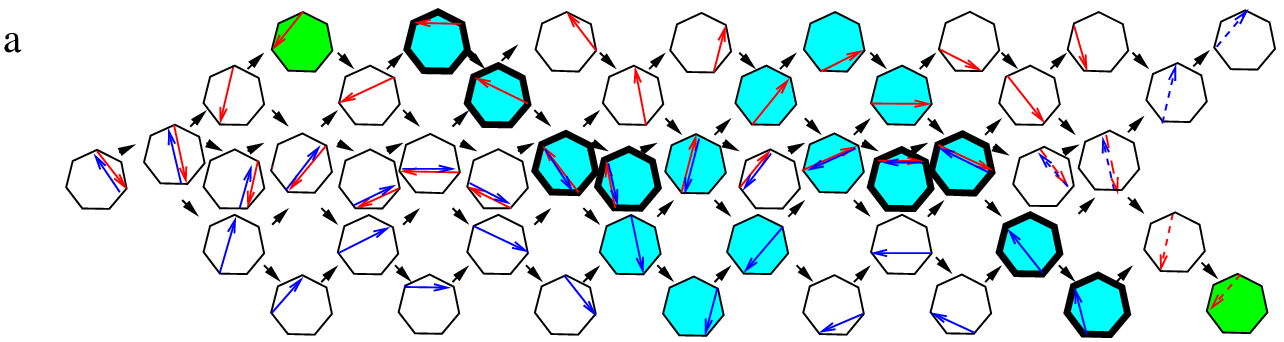}
\includegraphics[scale=0.48]{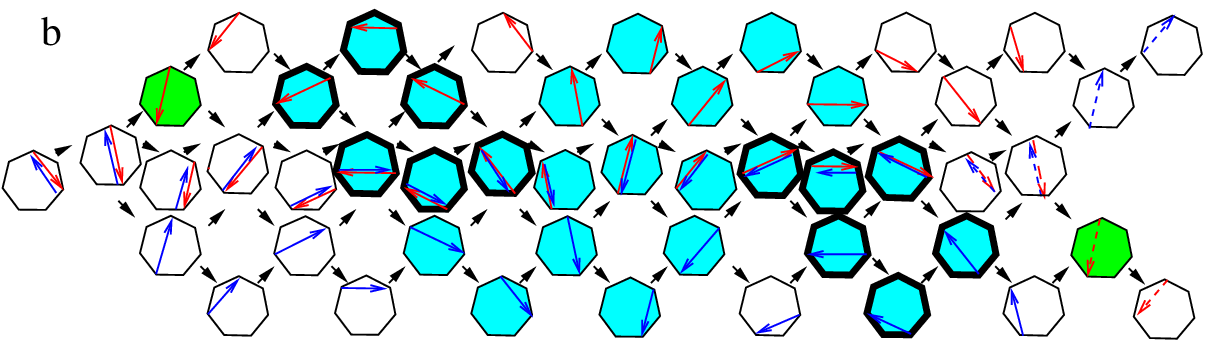}\vspace{0,4cm}
\includegraphics[scale=0.48]{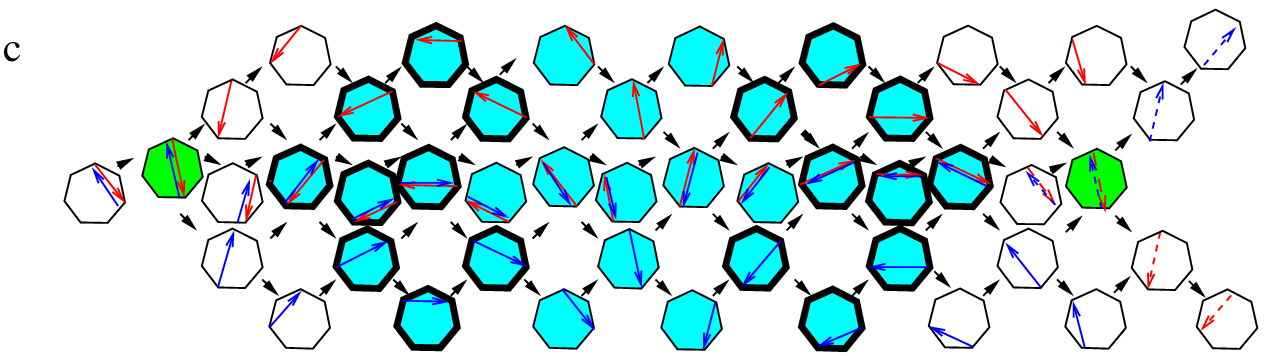}
\includegraphics[scale=0.48]{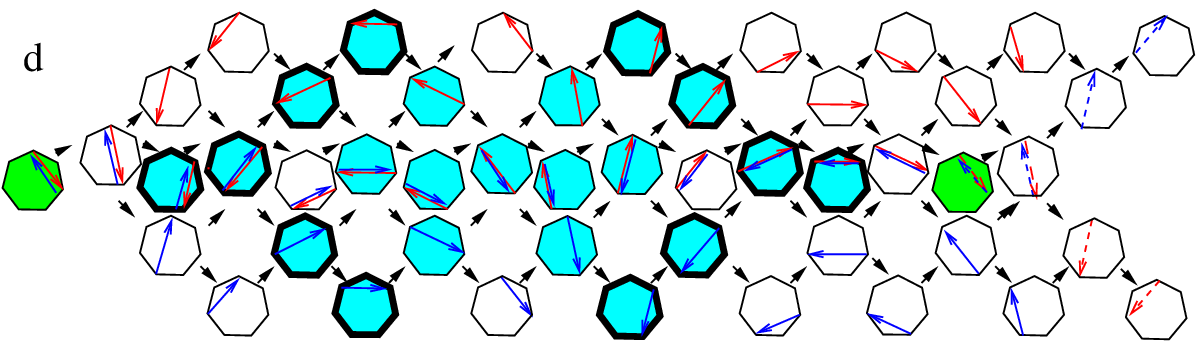}\vspace{0,4cm}
\includegraphics[scale=0.48]{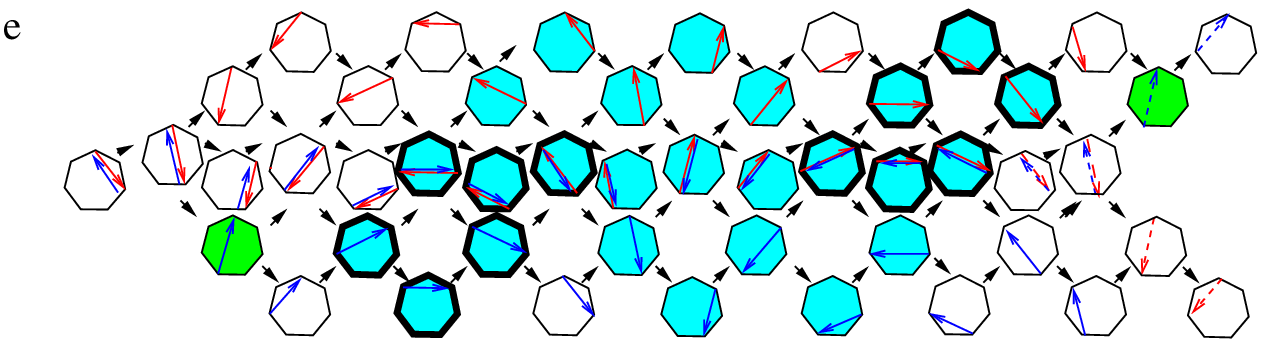}
\includegraphics[scale=0.48]{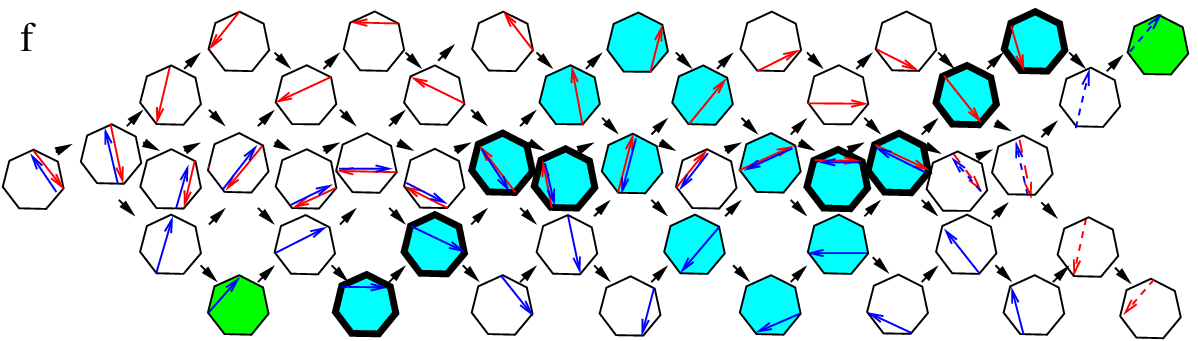}
\end{center}
\caption{ The $\mathrm{Ext}$-hammocks of the diagonals with 
vertices of $I_1$ and $I_2$ represented in 
heptagons with bold boundary.}
\label{Exts}
\end{figure}

In the next result, we assume that $D_X$ is a coloured oriented diagonal of
$\Pi$ in the first slice of $\Gamma$. This assumption
can be dropped using $\tau$-shifts, or 
renumbering the vertices of $\Pi$.
Moreover, we write $\partial\Pi$ to indicate the boundary of $\Pi$, 
and for two coloured
oriented diagonals $D_X$ and $D_Y$ we say that $D_Y$
{\em enters} the smaller region bounded by $D_{X}$ and $\partial\Pi$ 
if the arrow head of $D_Y$ goes to a vertex of $\partial\Pi$ inside the region
and different from the vertices joined by $D_{X}$.

\begin{prop}\label{cuts}
Let $D_{X}, D_{Y}$ be coloured oriented diagonals of $\Pi.$ 
Assume $D_X$ is in the first slice of $\Gamma$, and that $D_X$ crosses $D_{Y}$.
\begin{itemize}
 \item If $D_{X}$ is a paired diagonal, then
 $\mathrm{dim}_k(\mathrm{Ext}^1_{\Pi}(D_X,D_Y))=1$.
 \item If $D_{X}$ is a single diagonal,
 and $D_{Y}$ enters the 
 smaller region bounded by $D_{X}$ and $\partial\Pi$, then
 $\mathrm{dim}_k(\mathrm{Ext}^1_{\Pi}(D_X,D_Y))=1$.
\end{itemize}
\end{prop}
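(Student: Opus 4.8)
The plan is to combine the functor to type $A_4$ with the $2$-Calabi--Yau symmetry, so that the statement reduces to a hammock computation which can ultimately be read off Figure~\ref{Exts}. As a first reduction, recall from Remark~\ref{2cy} that $\mathcal{C}_{E_6}$ is $2$-Calabi--Yau, so $\mathrm{Ext}^1_\Pi(D_X,D_Y)\cong D\,\mathrm{Ext}^1_\Pi(D_Y,D_X)$ and the front and back $\mathrm{Ext}$-hammocks of $D_X$ coincide; hence $\dim_k\mathrm{Ext}^1_\Pi(D_X,D_Y)$ equals the multiplicity of $D_Y$ as a vertex of the (common) $\mathrm{Ext}$-hammock of $D_X$ inside $\Gamma$, and it suffices to understand that hammock.

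For the non-vanishing I would pass to type $A_4$ through the dense full functor $\widetilde{\pi}\colon\Gamma\to\Gamma_\Pi$ of Corollary~\ref{corproj}, and write $D_{(i,j)}:=\widetilde{\pi}(D_X)$ and $D_{(k,l)}:=\widetilde{\pi}(D_Y)$. If $D_X$ crosses $D_Y$ in an interior point, then the underlying unoriented diagonals $(i,j)$ and $(k,l)$ cross in $\Pi$, so by \cite{CCS} the vertex $D_{(k,l)}$ lies in the $\mathrm{Ext}$-hammock $H$ of $D_{(i,j)}$ in $\Gamma_\Pi$, i.e.\ $D_Y\in\widetilde{\pi}^{-1}(H)$. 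Now I invoke the fact recorded just before the statement, that every oriented diagonal crossing $D_X$ is a vertex of $I_1(D_X)\cup\rho(I_1(D_X))$ and of $I_2(D_X)\cup\rho(I_2(D_X))$. When $D_X$ is paired we have $\rho(D_X)=D_X$, so $\rho$ maps the $\mathrm{Ext}$-hammock of $D_X$ onto itself and all four pieces $I_1(D_X),\rho(I_1(D_X)),I_2(D_X),\rho(I_2(D_X))$ lie in it; thus $D_Y$ is in the $\mathrm{Ext}$-hammock of $D_X$ and $\mathrm{Ext}^1_\Pi(D_X,D_Y)\neq 0$. When $D_X$ is single we have $\rho(D_X)\neq D_X$, so among the four pieces only $I_1(D_X)$ and $I_2(D_X)$ belong to the hammock of $D_X$, the $\rho$-translates belonging to the hammock of $\rho(D_X)$; here I would check that the hypothesis that $D_Y$ enters the smaller region bounded by $D_X$ and $\partial\Pi$ is exactly the geometric condition selecting $I_1(D_X)\cup I_2(D_X)$ over $\rho(I_1(D_X))\cup\rho(I_2(D_X))$, verifying this first on the generators $\tau^{-1}(D_X)\in I_1(D_X)$ and $\tau(D_X)\in I_2(D_X)$ and then propagating along the connected sub-quivers $I_1(D_X),I_2(D_X)$, using that $\rho$ reverses the orientation of single diagonals. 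In both cases this puts $D_Y$ into the $\mathrm{Ext}$-hammock of $D_X$, hence $\mathrm{Ext}^1_\Pi(D_X,D_Y)\neq 0$.

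It then remains to bound the dimension by $1$, and this is the step I expect to be the main obstacle. The idea is that over the region $I_1(D_X)\cup I_2(D_X)$ — and, for $D_X$ paired, over the whole $\mathrm{Ext}$-hammock of $D_X$ after folding by $\rho$ — the hammock-multiplicity function of $D_X$ in $\Gamma$ agrees with the one of $D_{(i,j)}$ in $\Gamma_\Pi$, which is $\{0,1\}$-valued because $\mathrm{Ext}^1$-spaces between indecomposables of $\mathcal{C}_{A_4}$ are at most one-dimensional (and, by \cite{CCS}, this multiplicity is $1$ exactly on the diagonals crossing $(i,j)$). To make this precise one wants $\widetilde{\pi}$ to restrict to an isomorphism of stable translation quivers from the connected, $\mathbb{Z}A$-convex sub-quiver $I_1(D_X)$, resp.\ $I_2(D_X)$, onto $H$, and then to use that the hammock-multiplicity function of $D_X$ is computed by knitting within the hammock itself, cf.\ \cite[Chap. 8]{BMRRT} and \cite{bongartz}, so that it is transported by this isomorphism. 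The delicate point is to justify that this knitting is genuinely local to $I_1(D_X)\cup I_2(D_X)$ inside $\Gamma$, i.e.\ that no additional contribution to $\mathrm{Hom}_\Gamma(\tau^{-1}D_X,D_Y)$ comes from meshes outside this region — here one must use that the $\mathrm{Ext}$-hammocks in $\mathcal{C}_{E_6}$ that are genuinely two-dimensional lie away from the configurations in question. Since, up to the action of $\tau$, of $\rho$, and of the colour swap, there are only finitely many diagonals $D_X$ to treat, this last verification can be carried out by direct inspection of the hammocks displayed in Figure~\ref{Exts}, and putting the two steps together yields $1\le\dim_k\mathrm{Ext}^1_\Pi(D_X,D_Y)\le 1$ in each of the two stated cases.
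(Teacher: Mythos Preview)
Your proposal is correct and follows essentially the same route as the paper: both arguments reduce to showing that the diagonals $D_Y$ crossing $D_X$ (together with the ``entering'' condition in the single case) are exactly the vertices of $I_1(D_X)\cup I_2(D_X)$, using that $\rho(I_i(D_X))=I_i(D_X)$ when $D_X$ is paired, and that for $D_X$ single the orientation condition separates $I_i(D_X)$ from $\rho(I_i(D_X))$. The paper's proof is terser and leaves the bound $\dim\le 1$ entirely implicit in Figure~\ref{Exts}; your extra paragraph attempting to transport hammock multiplicities from $\Gamma_\Pi$ along $\widetilde{\pi}$ is a reasonable way to make this explicit, and you are right that it ultimately still rests on the case-by-case inspection of the hammocks, which is also what the paper does.
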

\begin{proof}
If $D_{X}$ is paired, $I_1(D_X)$ coincides with $I_1(D_X)\cup\rho(I_1(D_X))$ and
$I_2(D_X)$ coincides with $I_2(D_X)\cup\rho(I_2(D_X))$, thus the vertices of
$I_1(D_X)$ and
$I_2(D_X)$
are all the oriented coloured 
diagonals of $\Pi$ crossing $D_{X}$.

If $D_{X}$ is a single coloured diagonal of $\Pi$, we need to
distinguish between the diagonals
inside $I_i(D_X)$ and $\rho(I_i(D_X))$, $i=1,2$.
Then we observe that the coloured oriented diagonals in $I_1(D_X)$ and $I_2(D_X)$
are precisely the ones satisfying the 
assumptions of the proposition.
\end{proof}

\subsection{Curves of oriented coloured diagonals}\label{curves}
The aim of this section is to divide
the $\mathrm{Ext}$-hammocks
in $\mathcal{C}_{E_6}$  into curves. The reason why we do this is because 
for each coloured oriented
diagonal $D_X$
we want to find a uniform geometric description of the elements inside the
$\mathrm{Ext}$-hammock of $D_X$. 
Since the hammocks in $\mathcal{C}_{E_6}$ are very big,
this goal seems hopeless.
However, dividing the $\mathrm{Ext}$-hammock of $D_X$
into smaller sets, allows us to describe
the elements of each such
set in geometric terms.  We will call these sets curves.

Let $X$ be an indecomposable object of $\mathcal{C}_{E_6}$
and let $D_X$ be the corresponding coloured oriented diagonal viewed as a
vertex of $\Gamma$.
For $r\in\{2,4\}$, 
{\em the curves} $C_1(D_X),\ldots,C_r(D_X)$ of $D_X$
in $\Gamma$
are $r$ collections of oriented coloured diagonals 
having non-vanishing extensions with $D_X$.
Each collection $C_i(D_X)$
has the shape of a curve in $\Gamma$. 

We begin defining the
curves of $D_X$, for $D_X$ in the first slice of $\Gamma$. 
For all other vertices $D_X$ of $\Gamma$, curves can be defined
from the previous ones by $\tau$-shifts.
The first curve of $[1,6]_R$, denoted by $C_1([1,6]_R)$, is defined as follows:
\begin{align*}
C_1([1,6]_R):=&\{[7,2+i] _c,\, 0\leq i\leq 3,\, c\in\{R,P\}\}\\
&\cup\{[5,7+i] _c,\, 0\leq i\leq 3,\,c\in\{R,P\} \}\\
&\cup\{[6,3]_R\}.
\end{align*}
The second curve of $[1,6]_R$, denoted by $C_2([1,6]_R)$, is defined as follows:
\begin{align*}
C_2([1,6]_R):=&\{[5-i,7] _c,\, 0\leq i\leq 3,\, c\in\{B,P\}\}\\
&\cup
\{[2,7+i] _c,\, 0\leq i\leq 3,\,c\in\{B,P\} \}\\
&\cup\{[1,4]_B\}.
\end{align*}
By definition $C_1([1,6]_R)$ is obtained by the 
sequence of minimal clockwise rotations around the vertices 7, 5, 3 of $\Pi$
starting in
$\tau^{-1}([1,6]_R)=[2,7]_R$ and ending with
$[6,3]_R$. Dually, $C_2([1,6]_R)$ is obtained by a sequence
of minimal anticlockwise rotations around the vertices 7, 2, 4  
starting in $\tau([1,6]_R)=[5,7]_B$ and
ending in $[1,4]_B$.

By construction the $\mathrm{Ext}$-hammock of $[1,6]_R$ 
is $C_1([1,6]_R)\cup C_2([1,6]_R)$. 
In Figure \ref{Hammocks}(a) the elements of $C_1([1,6]_R)$ 
are drawn in 
the upper half of $\Gamma$, while the elements
of $C_2([1,6]_R)$ are in the lower half. 

Next, we are going to associate four curves to $[1,5]_R$.  The first curve of $[1,5]_R$, 
$C_1([1,5]_R)$, is the set containing coloured oriented diagonals of $\Pi$
obtained by a sequence of minimal clockwise rotations around the vertices
6, 4, 2 starting in $\tau^{-1}([1,5]_R)$ and ending in $[5,2]_R$. 
The third curve $C_3([1,5]_R)$ is obtained by a 
sequence of minimal anticlockwise rotations around the vertices 7, 2, 4
starting with $\tau([1,5]_R)$ and ending in $[1,4]_B$.
Moreover, $C_2([1,5]_R)$ coincides with $C_1([1,6]_R)$, and $C_4([1,5]_R)$
coincides with $C_2([1,6]_R)$.

For $D_X\in\{[1,6]_R,[,1,5]_R\}$, the 
curves of $C_1(\rho(D_X)),\ldots,C_r(\rho(D_X))$ of
$\rho(D_X)$ are defined by $\rho(C_1(D_X)),\ldots,\rho(C_R(D_X))$, $r\in\{2,4\}$.

We are left with defining the curves of
the paired diagonals $[1,3]_P$ and $[1,4]_P$.
For $[1,3]_P$ we have $C_1([1,3]_P)$ given by the set containing
both single and paired coloured oriented diagonals
obtained by a sequence of minimal rotations 
in the clockwise order around the vertices 2, 7, 5
starting in $\tau^{-1}([1,3]_P)$ and ending in $[5,1]_P$.
Similarly $C_2([1,3]_P)$ is obtained by a sequence 
of minimal anticlockwise rotations around the vertices 2, 4, 5
starting in $\tau([1,3]_P)$ and ending in $[6,2]_P$.

Next, $C_1([1,4]_P)$ is 
obtained by a sequence of 
of minimal clockwise rotations starting in
$\tau^{-1}([1,4]_P) $ and ending in $[5,1]_P$.
$C_3([1,4]_P)$ is obtained rotating in the
anticlockwise order $\tau([1,4]_P)$ to $[4,7]_P$.
Finally, $C_2([1,4]_P)=\tau^{-1}(C_1([1,3]_P))$ and
$C_4([1,4]_P)=C_2([1,3]_P)$.

In Figure \ref{Hammocks}(a)-(f) we represent the various curves of $D_X$,
for$D_X$ be in the first (and last)
slice of $\Gamma$.
The numbers 1, 2, 3, 4 indicate
the starting term of the curves $C_1(D_X),\ldots,C_4(D_X)$ and
the colours of the heptagons indicate the curves they intersect.

\begin{figure}
\begin{center}
\center
\psfragscanon
\psfrag{a}[][][0.75]{(a)}
\psfrag{b}[][][0.75]{(b)}
\psfrag{c}[][][0.75]{(c)}
\psfrag{d}[][][0.75]{(d)}
\psfrag{e}[][][0.75]{(e)}
\psfrag{1}[][][0.75]{\textbf{1}}
\psfrag{2}[][][0.75]{\textbf{2}}
\psfrag{3}[][][0.75]{\textbf{3}}
\psfrag{4}[][][0.75]{\textbf{4}}
\psfrag{f}[][][0.75]{(f)}
\centering
\includegraphics[scale=0.48]{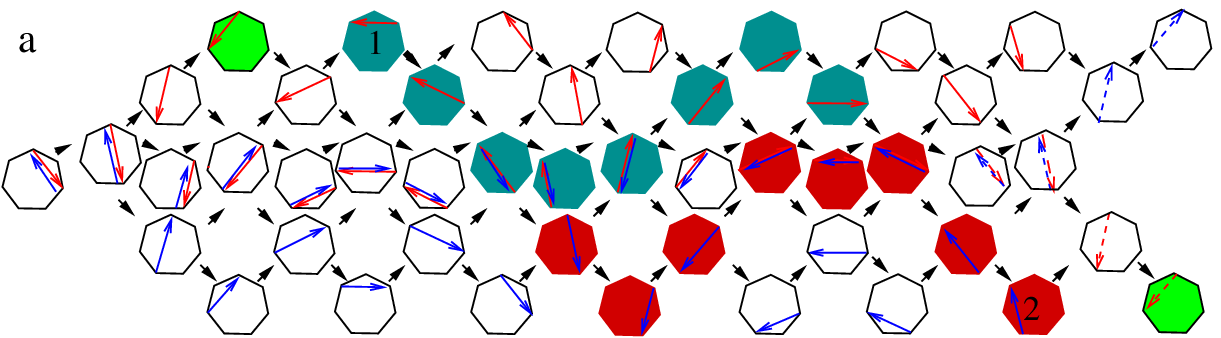}
\includegraphics[scale=0.48]{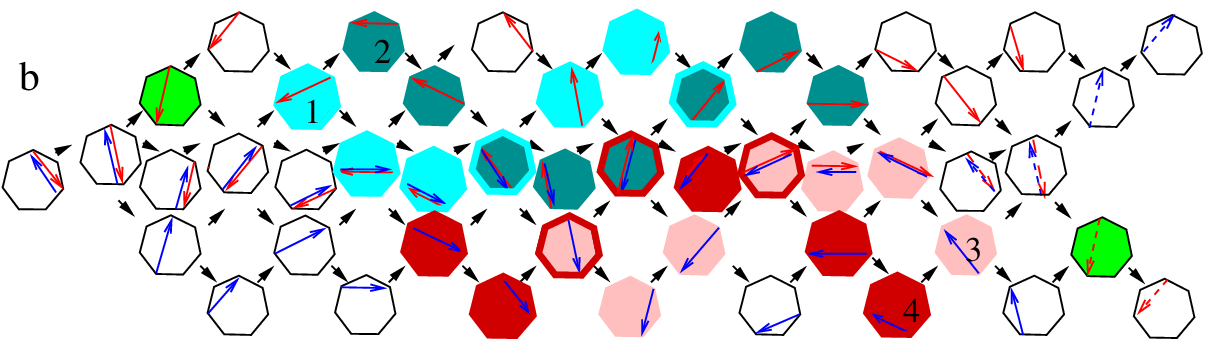}\vspace{0,4cm}
\includegraphics[scale=0.48]{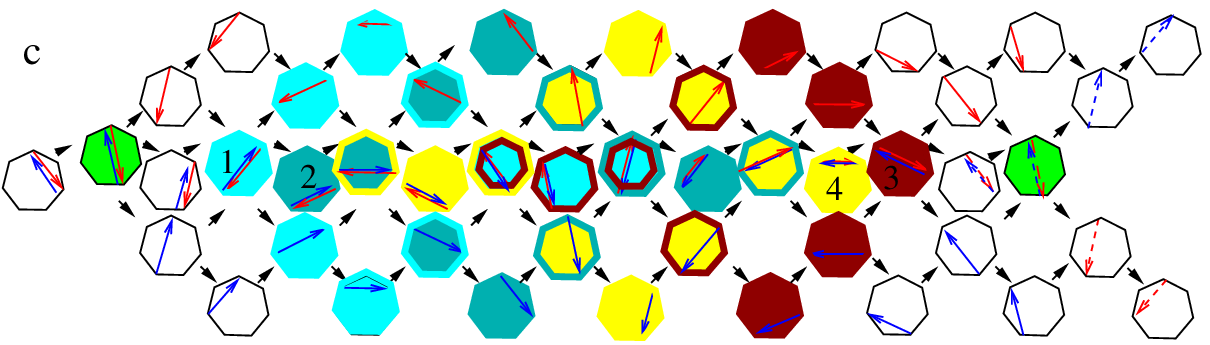}
\includegraphics[scale=0.48]{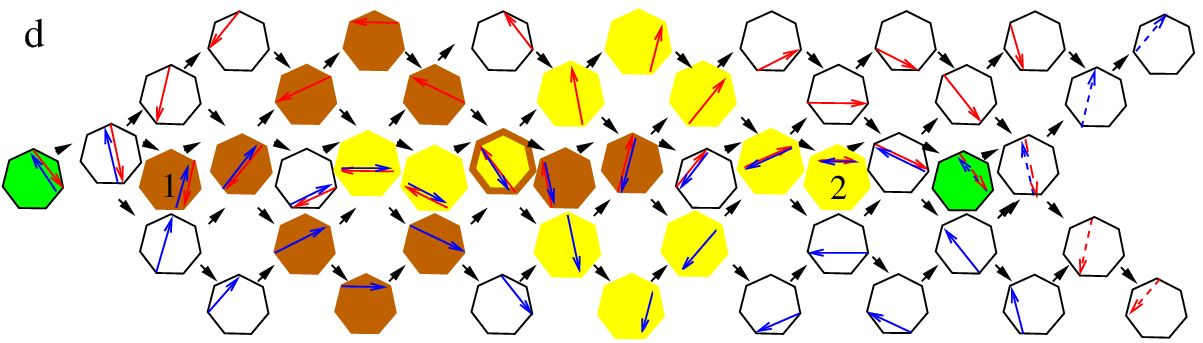}\vspace{0,4cm}
\includegraphics[scale=0.48]{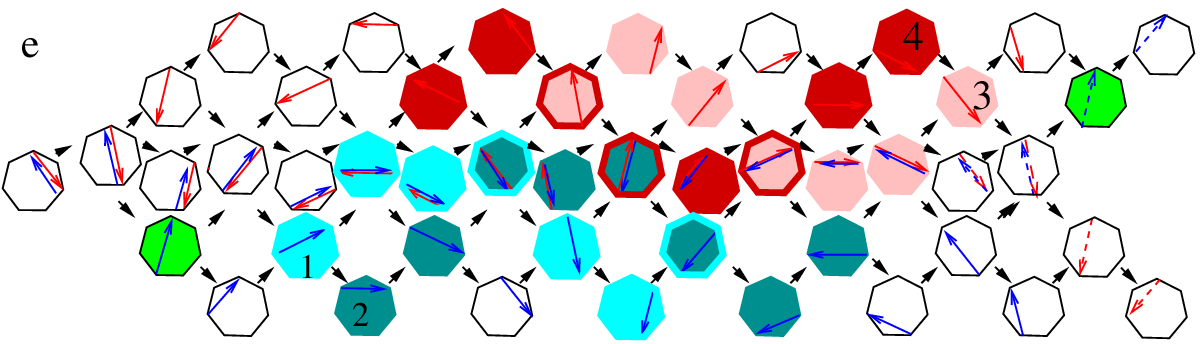}
\includegraphics[scale=0.48]{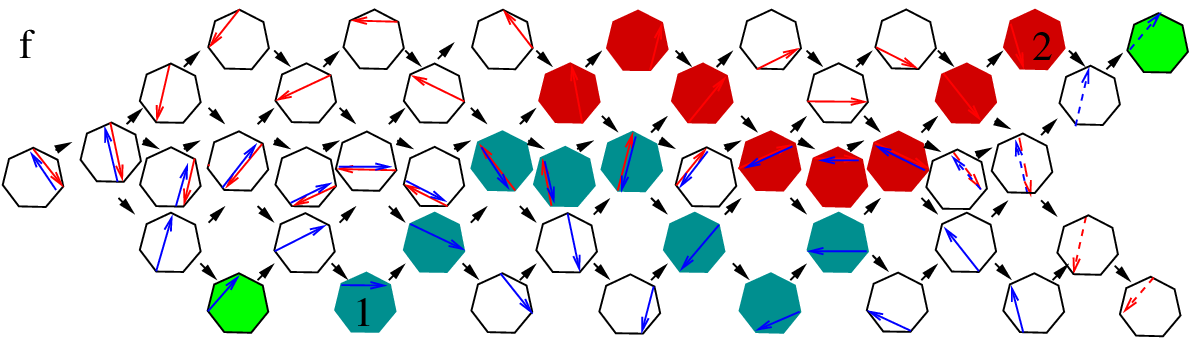}
\end{center}
\caption{Decomposition into curves of the Ext-hammocks of the vertices in the 
first slice of $\Gamma$.}
\label{Hammocks}
\end{figure}

\subsection{Intersections of curves}

\begin{prop} \label{int}
Let $D_X,D_Y$ be vertices of $\Gamma$. Let $C_1(D_X),\ldots,C_r(D_X)$
be the curves of $D_X$, $r \in\{2,4\}$. Then
$\mathrm{dim}_k(\mathrm{Ext}^1_{\Pi}(D_X,D_Y))$ is equal to
the number of curves of $D_X$ intersecting
with $D_Y$ in $\Gamma$ (0 up to 3).
\end{prop}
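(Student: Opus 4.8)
The plan is to reduce the general statement to the case where $D_X$ lies in the first slice of $\Gamma$ (the case already set up in Subsection~\ref{curves}) and then to match, vertex by vertex, the decomposition of the $\mathrm{Ext}$-hammock of $D_X$ into the curves $C_1(D_X),\ldots,C_r(D_X)$ with the multiplicity of each vertex $D_Y$ in that hammock. First I would invoke $\tau$-equivariance: since $\tau$ is an automorphism of $(\Gamma,\tau)$ inducing the AR-translation, and since by construction $C_i(\tau D_X)=\tau(C_i(D_X))$ and $\mathrm{Ext}^1_{\Pi}(\tau D_X,\tau D_Y)\cong\mathrm{Ext}^1_{\Pi}(D_X,D_Y)$ (the $\mathrm{Ext}$-hammocks are $\tau$-stable because $\mathcal{C}_{E_6}$ is $2$-Calabi--Yau, see Remark~\ref{2cy}), it suffices to prove the claim for the six representatives $D_X\in\{[1,6]_R,[1,5]_R,[1,3]_P,[1,4]_P\}$ and their $\rho$-images, exactly the vertices for which the curves were explicitly written down in Subsection~\ref{curves}.

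Next I would compute the support and the multiplicities of the $\mathrm{Ext}$-hammock $\mathrm{Hom}_{\mathcal{C}_{E_6}}(\tau^{-1}D_X,-)$ for each such $D_X$. Here I use that $\mathcal{C}_{E_6}$ is the mesh category of $\Gamma\cong\mathbb Z E_6/\tau^{-6}\rho$ (Theorem~\ref{isoE6}), so the hammock is governed by the usual mesh/hammock functions on $\mathbb Z E_6$, as recalled via \cite{bongartz} and \cite[Chap.~8]{BMRRT}. Because $E_6$ has a branch vertex, a hammock on $\mathbb Z E_6$ can have vertices of multiplicity up to $3$; the shape of the hammock near and beyond the branch point is what produces the number $r\in\{2,4\}$ of curves and the range "$0$ up to $3$" in the statement. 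Concretely, for the single diagonals $[1,6]_R,[1,5]_R$ and the paired diagonals $[1,3]_P,[1,4]_P$ I would read off from Figures~\ref{Exts} and \ref{Hammocks} that the hammock is the union $C_1(D_X)\cup\cdots\cup C_r(D_X)$, that each $C_i(D_X)$ is multiplicity-free as a set of vertices, and that two distinct curves $C_i(D_X)$ and $C_j(D_X)$ meet in a prescribed (possibly empty) sub-quiver. Then $\dim_k\mathrm{Ext}^1_{\Pi}(D_X,D_Y)$, which equals the total multiplicity of $D_Y$ in the hammock, equals the number of indices $i$ with $D_Y\in C_i(D_X)$, i.e. the number of curves of $D_X$ that $D_Y$ lies on. The base cases $r=2$ (the paired diagonals $[1,3]_P$, and the single diagonals $[1,6]_R$) follow from Proposition~\ref{cuts}, which already records that crossing diagonals contribute exactly $\dim_k=1$; the genuinely new content is the $r=4$ cases, where I would verify directly from the mesh relations that the hammock function takes the value $2$ precisely on the overlap $C_i(D_X)\cap C_j(D_X)$ and the value $1$ (resp.\ $3$, if such vertices occur) elsewhere in the support.

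Finally I would assemble the pieces: combining the $\tau$-reduction with the $\rho$-symmetry $\mathrm{Ext}^1_{\Pi}(\rho D_X,\rho D_Y)\cong\mathrm{Ext}^1_{\Pi}(D_X,D_Y)$ and $C_i(\rho D_X)=\rho(C_i(D_X))$ handles the $\rho$-images of the four representatives, and the case analysis above handles the representatives themselves, so the formula holds for every pair $(D_X,D_Y)$.

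The main obstacle I anticipate is the bookkeeping in the $r=4$ cases: one must pin down exactly which vertices lie in $C_i(D_X)\cap C_j(D_X)$ and confirm that the mesh/hammock multiplicity there is exactly the count of curves through that vertex, with no extra contributions hiding near the branch vertex of $E_6$. This is where a careful inspection of Figure~\ref{Hammocks}, cross-checked against the hammock functions of \cite{bongartz}, is essential; everything else is a routine consequence of $\tau$- and $\rho$-equivariance together with Proposition~\ref{cuts}.
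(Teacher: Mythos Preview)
Your proposal is correct and follows essentially the same route as the paper: reduce to $D_X$ in the first slice of $\Gamma$ via $\tau$-equivariance and the $2$-Calabi--Yau symmetry (Remark~\ref{2cy}), then observe that the curves were constructed precisely so that their union is the $\mathrm{Ext}$-hammock and their overlap multiplicities reproduce the mesh-category $\mathrm{Hom}$-dimensions. The paper's argument is terser (it simply appeals to ``by construction'' and the mesh relations), whereas you spell out the $\rho$-reduction and the case split $r\in\{2,4\}$; one small correction is that the $r=2$ case for $[1,3]_P$ is not entirely covered by Proposition~\ref{cuts}, since its two curves do meet once (Figure~\ref{Hammocks}(d)), so that vertex still requires the direct mesh-relation check you describe for the $r=4$ cases.
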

\begin{proof} First, the $\mathrm{Ext}$-hammocks in 
the AR-quiver of $\mathcal{C}_{E_6}$
are invariant under $\tau$-shifts. After changing the
image of the projective objects of $\mathrm{mod} k E_6$
in the equivalence of Theorem \ref{isoE6} we
can assume that  
$X$ or $Y$ corresponds to a diagonal 
in the first slice of $\Gamma$. 
By  remark \ref{2cy} we can treat the cases where $D_X$, or $D_Y$
belongs to the first slice
of $\Gamma$ in the same way.
Second, the curves of $D_X$ are by construction 
such that their intersection points 
coincide with the vertices $D_Y$ in $\Gamma$ for which 
$\mathrm{dim}_k(\mathrm{Ext}^1_{\Pi}(D_X,D_Y))\geq1$. 
From the definition of 
morphisms in the mesh category of a stable translation quiver,
it follows that the number of curves intersecting in $D_Y$ is 
the dimension of  $\mathrm{Ext}^1_{\mathcal{C}_{E_6}}(D_X,D_Y).$
\end{proof}

Let again $D_X$ be in the first slice of $\Gamma$.
In Figure \ref{Hammocks}(a)-(f)  
the dimension of the 
space $\mathrm{dim}_k(\mathrm{Ext}^1_{\Pi}(D_X,D_Y))$
is expressed by the numbers of colours filling the heptagon containing
$D_Y$. If $D_Y$ is in a white heptagon of $\Gamma$ then 
$\mathrm{dim}_k(\mathrm{Ext}^1_{\Pi}(D_X,D_Y))=0$.

More precisely, the two curves represented in 
Figure \ref{Hammocks}(a) and (f) 
never intersect, and $\mathrm{dim}_k(\mathrm{Ext}^1_{\Pi}(D_X,D_Y))=1$ for 
$D_Y$ in a coloured heptagon, and
$D_X$ in the first slice of $\Gamma$.

In Figure \ref{Hammocks}(b) and (e) 
the curve $C_1(D_X)$ intersects $C_2(D_X)$ in 
two vertices.
Similarly, for $C_3(D_X)$ and $C_4(D_X)$. 
The curve $C_2(D_X)$ intersects $C_4(D_X)$ only once.
The five heptagons where two curves meet have 
two colours (boundary and interior of the heptagon). 
Then $\mathrm{dim}_k(\mathrm{Ext}_{\Pi}(D_X,D_Y))=2$
for $D_Y$ corresponding to one of these heptagons.

In Figure \ref{Hammocks}(c) 
there are two heptagons where three curves meet. 
They 
are drawn with three colours, and 
hence $\mathrm{dim}_k(\mathrm{Ext}_{\Pi}(D_X,D_Y))=3$ 
for $D_Y$ corresponding to one of these two.  
Moreover, in nine heptagons two curves meet, 
and they are drawn in two colours.

Finally, in Figure \ref{Hammocks}(d) two curves are drawn, 
and they intersect only in one vertex of $\Gamma$.

\subsection{Cluster tilting objects}

Let $Q$ be an orientation of a simply-laced Dynkin 
graph with $n$ vertices.
Let $\mathcal{T}=\{T_1,T_2,\dots,T_n\}$ be a set of 
pairwise non isomorphic indecomposable objects of $\mathcal{C}_Q$.
If $\mathrm{Ext}_{\mathcal{C}_{Q}}^1(T_i, T_j)=0$ for all $T_i,T_j\in\mathcal{T}$, 
then one says that $\mathcal{T}$ is a {\em cluster tilting set} of $\mathcal{C}_{Q}$.
A {\em cluster tilting object} in $\mathcal{C}_{Q}$ is the direct sum of all objects
of a cluster tilting set in $\mathcal{C}_Q$. Observe that knowing
a cluster tilting objects allows to determines a cluster tilting set and viceversa.
Moreover, given a cluster tilting set $\mathcal{T}$, one says that 
$\overline{T}=\oplus_{j\neq i}T_j$, $T_j\in\mathcal{T}$ is an
{\em almost complete cluster tilting object} 
if there is an indecomposable object $T_i^*$ in $\mathcal{C}_Q$
such that $\overline{T}\oplus T_i^*$ 
is a cluster tilting object of $\mathcal{C}_Q$. The object $T_i^*$
is called the {\em complement} of 
$T_i$.

The {\em mutation at $i$} of a cluster tilting object 
$\mathcal{T}$ in $\mathcal{C}_Q$, for $1\leq i\leq n$, is the operation which
replaces the indecomposable summand $T_i$ in 
$\oplus_{j=1}^nT_j$ with the 
complement $T_i^*$ of $T_i$ in $\overline{T}=\oplus_{j\neq i}T_j.$

The statements in the next Theorem are shown in \cite{BMRRT}.
\begin{thm}\label{thmbmrrt}
Let $T$ be a cluster tilting object in $\mathcal{C}_{Q}$.
\begin{itemize}
\item each almost complete cluster tilting object $\overline{T}$ in $\mathcal{C}_{Q}$
has exactly two complements, $T$ and $T^*$.
\item If $T$ and $T^*$ are complements of $\overline{T}$ then
$\mathrm{dim}_k(\mathrm{Ext}_{\mathcal{C}_{Q}}^1(T,T^*))=1.$ On the other side,
if $\mathrm{dim}_k(\mathrm{Ext}_{\mathcal{C}_{Q}}^1(T,T^*))=1$, then there is an almost
complete cluster tilting object $\overline{T}$ such that $T$ and $T^*$ are complements of $\overline{T}$.
\end{itemize}
\end{thm}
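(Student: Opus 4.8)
Since this is precisely the content of \cite{BMRRT}, the proof would follow their strategy: transport the question to the module category of $kQ$ through the orbit presentation $\mathcal{C}_Q=\mathcal{D}^b(\mathrm{mod}\,kQ)/F$, invoke classical tilting theory there, and keep track of the extra indecomposables --- shifts of projectives --- that the orbit identification introduces.

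For the statement on complements, I would fix an almost complete cluster tilting object $\overline T$, that is, a basic rigid object with $n-1$ pairwise non-isomorphic indecomposable summands admitting at least one complement. Using the $F$-action one may choose a lift of $\overline T$ into the standard fundamental domain $\mathrm{ind}\,\mathrm{mod}\,kQ\,\cup\,\{\Sigma P:P\text{ indecomposable projective}\}$. If this lift is an almost complete tilting module over $kQ$, the classical description of complements of almost complete tilting modules over a hereditary algebra applies: there are either one or two of them in $\mathrm{mod}\,kQ$. When there are two, they descend to $T$ and $T^{*}$, and a $\Hom$-dimension count in $\mathcal{C}_Q$ rules out any third complement. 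When there is only one --- the exceptional, non-sincere case --- the missing complement reappears in $\mathcal{C}_Q$ as a shifted projective $\Sigma P_i$, which the orbit identification makes rigid against $\overline T$, restoring the count to exactly two. The case where the chosen lift already contains a shifted projective is handled symmetrically, using that $\Sigma\,\mathrm{mod}\,kQ\,\cup\,\{\text{projectives}\}$ is an alternative fundamental domain.

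For the dimension statement, given complements $T,T^{*}$ of $\overline T$ I would write down the exchange triangle
$$T^{*}\longrightarrow B\longrightarrow T\longrightarrow \Sigma T^{*},$$
where $B\to T$ is a minimal right $\mathrm{add}(\overline T)$-approximation. Applying $\Hom_{\mathcal{C}_Q}(-,T^{*})$ and using that $\overline T\oplus T^{*}$ is rigid, so $\mathrm{Ext}^{1}_{\mathcal{C}_Q}(B,T^{*})=0$, together with $\Hom_{\mathcal{C}_Q}(T^{*},T^{*})=k$ and the observation that the identity of $T^{*}$ cannot factor through $B\in\mathrm{add}(\overline T)$, one reads $\dim_{k}\mathrm{Ext}^{1}_{\mathcal{C}_Q}(T,T^{*})=1$ directly off the resulting long exact sequence. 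Conversely, given indecomposables $T,T^{*}$ with $\dim_{k}\mathrm{Ext}^{1}_{\mathcal{C}_Q}(T,T^{*})=1$, I would take the non-split triangle above, check that its middle term $B$ is rigid, enlarge $B$ to a cluster tilting object using the fact (also established in \cite{BMRRT}) that every rigid object is a direct summand of some cluster tilting object, and extract an almost complete cluster tilting object $\overline T$ from it; the two exchange triangles then witness that $T$ and $T^{*}$ are its complements.

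The main obstacle is the bookkeeping in the first part: matching each exceptional (non-sincere) almost complete tilting module of $kQ$ with the shifted projective that provides its second complement in the orbit category, and verifying that the $\Hom$-spaces of $\mathcal{C}_Q$ never create a spurious third complement. Once the rigidity of $\overline T$ and the exchange triangles are in hand, the $\mathrm{Ext}$-dimension assertions reduce to formal homological algebra.
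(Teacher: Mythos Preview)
Your proposal is a reasonable outline of the argument in \cite{BMRRT}, but there is nothing to compare it against: the paper does not prove this theorem at all. It is stated purely as a citation, prefaced by ``The statements in the next Theorem are shown in \cite{BMRRT}'', and used as a black box in the subsequent combinatorics. So your sketch goes well beyond what the paper itself supplies; the paper's ``proof'' is simply the reference.
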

After Proposition 3.8 in \cite{FZ_y} and \cite[Thm. 4.5]{BMRRT} 
we know that there are 833 cluster tilting sets, hence 
cluster tilting objects, in $\mathcal{C}_{E_6}$.

From the work of Caldero-Chapoton-Schiffler, see \cite{CCS}, we know 
that cluster tilting objects 
of $\mathcal{C}_{A_n}$ are in bijection 
with the formal direct sums of diagonals 
belonging to a maximal collection of non-crossing 
diagonals in a regular $(n+3)$-gon. In this context
mutations corresponds to {\em flips} of diagonals. 
More precisely, a flip replaces 
a diagonal $D_i$ in a given triangulation $\Delta$
with the unique other diagonal $D_i^*$ crossing
$D_i$ and completing $\Delta\backslash D_i$ to a new
triangulation of the regular $(n+3)$-gon.

\subsection{First fundamental family of 
cluster configurations of $\Pi$}\label{longpaired}

Our next aim is to describe cluster tilting sets
of $\mathcal{C}_{E_6}$ as configurations of single and paired
coloured oriented diagonals in $\Pi.$

\begin{de}
A {\em cluster configuration} is a family of pairwise different 
coloured oriented diagonals of $\Pi$, $\mathcal{T}=\{D_1,D_2,\ldots,D_6\}$,
with the property that
$\mathrm{Ext}_{\Pi}^1(D_i, D_j)=0$ 
for all $D_i,D_j\in\mathcal{T}$.
A coloured oriented diagonal $D_i^*$
is called  {\em complement} of 
$D_i$ in $\mathcal{T} $ if  $D_i^*\neq D_i$ and
$\mathcal{T}'$ obtained from $\mathcal{T} $ after replacing ${D_i}$ by ${D_i^*}$
is a cluster configuration of $\Pi$. 
\end{de}

Consider two heptagons, and a long paired diagonal 
$L_P:=[i,i+3]_P$, $i\in\mathbb Z/7\mathbb Z$ of $\Pi$.
Our next goal is to complete $L_P$ to a set of coloured oriented diagonals 
inside the two heptagons, giving rise to a cluster configuration of $\Pi$. 
For this we remark that $L_P$ divides each $\Pi$
into the quadrilateral $\Pi_4$ with boundary vertices 
$\{i, i+1,i+2,i+3\}$, and the pentagon $\Pi_5$ with boundary vertices
$\{i, i+3,i+4,i+5,i+6\}$, $i\in\mathbb Z/7\mathbb Z$.

In Lemma \ref{cor} below we will see
that triangulating $\Pi_4$ with a short paired diagonal, and  
each $\Pi_5$ with single diagonals of the appropriate colour
gives rise to cluster configurations.

\begin{lemma}\label{cor}
Let $L_P:=[i,i+3]_P$, $i\in\mathbb Z/7\mathbb Z$.
\begin{itemize} 
\item For $i\neq1$, triangulating each
$\Pi_5$ with single diagonals of the same colour, 
and $\Pi_4$ with a short paired diagonal
gives a cluster configuration $\mathcal{T}_{L_P}$ of $\Pi$. 
\item All cluster configurations of $\Pi$ containing $[j,j+3]_P$ 
arise as $\tau^k(\mathcal{T}_{L_P})$, $1\leq j,k\leq 7$.
\end{itemize}
\end{lemma}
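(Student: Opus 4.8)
The plan is to convert both assertions into statements about crossings of diagonals in $\Pi$, using the hammock descriptions of Proposition \ref{cuts} and Proposition \ref{int} together with the $2$-Calabi--Yau symmetry recorded in Remark \ref{2cy}, so that $\mathrm{Ext}^1_\Pi$-vanishing becomes a purely combinatorial condition. First I would reduce to a convenient value of $i$: the $\mathrm{Ext}$-hammocks in $\Gamma$ are $\tau$-invariant (as used in the proof of Proposition \ref{int}), and $\tau$ permutes the seven long paired diagonals $[j,j+3]_P$ transitively, so it suffices to prove the first bullet for one fixed $i\neq 1$ and, for the second bullet, to show that every cluster configuration containing that particular $[i,i+3]_P$ equals some $\mathcal{T}_{L_P}$; the general case of the second bullet then follows by applying $\tau^k$.

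For the first bullet, $\mathcal{T}_{L_P}$ consists of $L_P=[i,i+3]_P$, one short paired diagonal lying strictly inside the quadrilateral $\Pi_4$, two single diagonals of one colour triangulating one copy of $\Pi_5$, and two single diagonals of the other colour triangulating the other copy; I would check that $\mathrm{Ext}^1_\Pi$ vanishes on each of the $\binom{6}{2}$ pairs, grouped by type. Every pair involving a paired diagonal is dispatched by the first bullet of Proposition \ref{cuts} and the observation made in its proof that the $\mathrm{Ext}$-hammock of a paired diagonal is exactly the set of diagonals crossing it: $(i,i+3)$ is a boundary edge of $\Pi_5$ and bounds $\Pi_4$, and the short paired diagonal lies strictly inside $\Pi_4$, so none of the remaining diagonals cross either of them. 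Same-colour single pairs are non-crossing because they triangulate a pentagon, and by the second bullet of Proposition \ref{cuts} and the curve decomposition of Figure \ref{Hammocks} non-crossing single diagonals of one colour have no extensions. The only delicate pairs are a red single against a blue single lying in the two copies of $\Pi_5$; here I would use that $\rho$ swaps the two colours and read off from the explicit curves $C_1,\dots,C_r$ in Figure \ref{Hammocks} that, for diagonals confined to $\Pi_5$, these curves miss the opposite-colour diagonals, so their extensions vanish. The hypothesis $i\neq 1$ is imposed so that all diagonals of $\mathcal{T}_{L_P}$ stay in the regular part of $\Gamma$, away from the exceptional slice $\Pi_{1,2,2}\vert_1$ on which $\tau$ carries the extra $\rho$-twist and around which the irregular four-curve hammocks sit; the configurations with $i=1$ are in any case produced by the second bullet as $\tau$-shifts.

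For the second bullet, let $\mathcal{T}$ be a cluster configuration with $[i,i+3]_P\in\mathcal{T}$. Each of the other five diagonals $D$ has $\mathrm{Ext}^1_\Pi([i,i+3]_P,D)=0$, so by Proposition \ref{cuts} (paired case) $D$ does not cross $(i,i+3)$; hence each of them is a coloured oriented diagonal contained in $\Pi_4$ or in $\Pi_5$. A diagonal in $\Pi_4$ has underlying diagonal $(i,i+2)$ or $(i+1,i+3)$; since two crossing diagonals at least one of which is paired have an extension, and the remaining case of two crossing opposite-colour singles is controlled by the hammock description, $\Pi_4$ can contribute at most one diagonal and it must be a short paired one. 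The other four diagonals then lie in $\Pi_5$ and are pairwise extension-free; invoking again the description of extensions among single and paired diagonals inside a pentagon, together with the fact that there are exactly four of them, forces them to consist of two same-colour singles triangulating $\Pi_5$ for each colour. Therefore $\mathcal{T}=\mathcal{T}_{L_P}$, which is the claim.

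The main obstacle is the red-single-against-blue-single analysis, in both directions, in each of the two bullets: these are precisely the extensions created by the doubling construction, so a soft ``crossing'' argument does not suffice and one must use the exact shape of the curves in Figure \ref{Hammocks} to see which opposite-colour diagonals inside $\Pi_5$ are extension-free. Pinning this down is what both verifies that the construction in the first bullet produces a genuine cluster configuration and forces, in the second bullet, that the only way to complete $L_P$ inside $\Pi$ is by triangulating the pentagon and quadrilateral regions as described.
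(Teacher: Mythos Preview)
Your approach is essentially the same as the paper's: reduce by $\tau$-invariance to a fixed $i\neq 1$, use Proposition~\ref{cuts} for the paired/crossing part, and appeal to the explicit hammock pictures for the rest. The paper frames the core step slightly differently: instead of checking the $\binom{6}{2}$ pairs, it first describes the \emph{entire complement} of the $\mathrm{Ext}$-hammock of $L_P$ in $\Gamma$ and observes that (for $i\neq 1$) this complement consists exactly of the single red diagonals above $L_P$, the single blue diagonals below $L_P$, and the two short paired diagonals of $\Pi_4$. This single structural observation simultaneously (i) forces the colour of the two pentagon triangulations, (ii) excludes paired or wrong-colour diagonals inside $\Pi_5$ and single diagonals inside $\Pi_4$, and (iii) reduces the remaining verification to checking that the hammocks of diagonals in the ``red'' region miss the ``blue'' region and miss $S_P$. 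Your red-versus-blue obstacle and your ``must be a short paired one'' claim are exactly what this observation handles in one stroke; in your write-up they remain the points where you fall back on reading Figure~\ref{Hammocks}, so the content is the same but the paper's packaging is a bit cleaner. One small imprecision to fix: the sentence ``non-crossing single diagonals of one colour have no extensions'' is not true in general and is not what Proposition~\ref{cuts} says; what you actually need (and what the paper asserts) is the specific fact that the hammock of a same-colour single inside $\Pi_5$ stays inside that colour's region of $\Gamma$.
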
 

\begin{proof} 
Since $i\neq1$ we can assume that the region in $\Gamma$ 
outside the $\mathrm{Ext}$-hammock
of $L_P$ has only blue diagonals below $L_P$ and only red diagonals 
above $L_P$. Observes that the diagonals outside
the $\mathrm{Ext}$-hammock are precisely the diagonals involved in 
triangulations of the two copies of $\Pi_5$ and $\Pi_4$.

Then chose a short paired diagonal $S_P$ triangulating $\Pi_4$ with a short paired diagonal. 
Then triangulating
a copy of $\Pi_5$ with only single red diagonals, and triangulating
the second copy of $\Pi_5$ with only single blue diagonals yields a
cluster configuration.
With Proposition \ref{cuts} we deduce that the arcs 
obtained in this way have no extension in each region above and below $L$ in $\Gamma$.
One can then check that the $\mathrm{Ext}$-hammocks in one region
do not pass through the other region, nor though $S_P$.
Thus, to each red triangulation 
one can choose a blue triangulations of $\Pi_5$, and all choices are possible.
Similarly, one can complete $\{L_P, S_P^*\}$ to a cluster configuration, where 
$S_P^*$ is the other short paired diagonal triangulating $\Pi_4$.
Notice that there are no other possibilities to complete $L_P$ to a cluster configuration of $\Pi$.
Next, there are 7 choices for $L_P$ in $\Gamma$. For each choice of $L_p$ the associated
cluster configurations are obtained from the previous 
by rotation though $\tau$. Adjustment of the colours-orientations of the single 
diagonals triangulating $\Pi_5$ are needed if $L_P$ is the first slice of $\Gamma$.
\end{proof}
Notice that the two triangulations of $\Pi_5$ can be different, 
and the color is uniquely determined by the position of $L_P$
in $\Pi$, resp. in $\Gamma$.

In the following we call the cluster configurations
given by a long paired diagonal and coloured oriented single and paired
diagonals triangulating two copies of $\Pi_5$
and $\Pi_4$, as describe in the first part of Lemma \ref{cor},
the {\em first fundamental family} 
of cluster configurations.
We denote this family by $\mathcal{F}_1$.

\subsection{Second fundamental family of cluster configurations of $\Pi$}
We saw in Lemma \ref{cor} that many cluster 
configurations correspond
to two triangulations of $\Pi$. 
Our next goal is to define a second family of
cluster configurations describing the remaining
cluster tilting set of $\mathcal{C}_{E_6}$.
For this the following general observation is needed.

For $i\in\mathbb Z/7\mathbb Z$, consider the
long single red diagonal $L=[i,i+4]_R$ of $\Pi$. Then $L$
divides $\Pi$
into the quadrilateral $\Pi_4:= 
\{ i+4,i+5,i+6,i\}$, and the pentagon
$\Pi_5:=\{ i, i+1,i+2,i+3,i+4\}$. 
Let $\mathcal{T}_L$ be a cluster configuration of $\Pi$
containing $L$. Then $\mathcal{T}_L$ necessarily also contains one of 
the two short single diagonals triangulating $\Pi_4$,
neighbouring $L$ in $\Gamma.$ Similarly for 
$\rho(L)=[i+4,i]_B$.
More precisely, 
\begin{lemma} \label{l1} Let $i\in\mathbb Z/7\mathbb Z$,
$L=[i,i+4]_R$ in $\Pi$, and $\mathcal{T}_L$ be
a cluster configuration containing $L$.
\begin{itemize}
\item If $i\neq1$, exactly one of $\{[i,i+5]_R,[i+6,i+4]_R\}$ is in $\mathcal{T}_L$.
\item If $i=1$, exactly one of 
$\{[i,i+5]_R,\rho([i+6,i+4]_R)\}$ is in $\mathcal{T}_L$. 
\end{itemize}
Similarly for $\rho(L)$.
Moreover, in each case the two diagonals are complements to each other.
\end{lemma}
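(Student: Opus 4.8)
\emph{Strategy.} The plan is to extract everything from the $\mathrm{Ext}$-hammock of $L$ inside $\Gamma$ via Proposition \ref{int} and Proposition \ref{cuts} (that is, from Figure \ref{Hammocks}(b) after a $\tau$-shift), and then to feed it into the exchange property of Theorem \ref{thmbmrrt}. The assertion for $\rho(L)$ follows from that for $L$ by applying the automorphism $\rho$ of $\Gamma$, which preserves $\mathrm{Ext}$-vanishing; so I only treat $L=[i,i+4]_R$ and abbreviate $A:=[i,i+5]_R$, $B:=[i+6,i+4]_R$ (with $B$ read as $\rho(B)=[i+4,i+6]_B$ when $i=1$). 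Geometrically $A$ and $B$ are the two chords of the quadrilateral $\Pi_4=\{i,i+4,i+5,i+6\}$, and in $\Gamma$ they are respectively an out-neighbour and an in-neighbour of $L$; one also checks from the definition of $\tau$, including the colour-orientation twist carried near the first slice (which makes $\tau([1,6]_R)=[5,7]_B$), that $B=\tau(A)$ uniformly.

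\emph{The ``at most one'' half and the complement claim.} Note that $A$ and $B$ cross each other (they are the diagonals of a quadrilateral), that $B$ is a single diagonal, and that $A$ enters the smaller region cut off by $B$ and $\partial\Pi$, since the arrow-head $i+5$ of $A$ is precisely the interior boundary vertex of that region. Hence, after renumbering the vertices of $\Pi$ so that $B$ lies in the first slice of $\Gamma$, Proposition \ref{cuts} gives $\dim_k\mathrm{Ext}^1_{\Pi}(B,A)=1$, and Remark \ref{2cy} then gives $\dim_k\mathrm{Ext}^1_{\Pi}(A,B)=1$ as well; thus $A$ and $B$ never lie together in a cluster configuration. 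The same computation, inserted into the second part of Theorem \ref{thmbmrrt}, shows that every almost complete cluster tilting object admitting $A$ as a complement admits $B$ as its other complement, which is the final clause of the lemma. It remains to see that $A$ and $B$ are each compatible with $L$; neither crosses $L$, and after the $\tau$-shift putting $L$ in the first slice one verifies against the curves of $L$ from Subsection \ref{curves} that $\mathrm{Ext}^1_{\Pi}(L,A)=\mathrm{Ext}^1_{\Pi}(L,B)=0$ by Proposition \ref{int}.

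\emph{The ``at least one'' half, the obstacle, and the case $i=1$.} Write $\mathcal{T}_L=\{L,D_1,\dots,D_5\}$. By Proposition \ref{cuts} no $D_j$ may cross $L$ with its arrow-head entering the smaller region $\Pi_4$; and, exactly as in the proof of Lemma \ref{cor}, the part of $\Gamma$ lying outside the $\mathrm{Ext}$-hammock of $L$ and over $\Pi_4$ carries only red arcs. Consequently any $D_j$ whose underlying chord lies inside $\Pi_4$ is a red chord of the quadrilateral, oriented so as to be a neighbour of $L$ in $\Gamma$, hence $D_j\in\{A,B\}$; so what remains is to exclude that $\mathcal{T}_L$ contains no chord of $\Pi_4$ at all. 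I would do this by passing to the Calabi-Yau reduction $L^{\perp}$, a $2$-Calabi-Yau cluster category of rank five in which $\{D_1,\dots,D_5\}$ is a cluster configuration and $A$, $B=\tau(A)$ still form an exchange pair, and checking there that $A$ and $B$ are the two complements of one and the same almost complete cluster tilting object of $L^{\perp}$; Theorem \ref{thmbmrrt} then forbids any cluster configuration of $L^{\perp}$ — in particular $\{D_1,\dots,D_5\}$ — from omitting both. This last step, that a cluster configuration through $L$ cannot avoid the whole of $\Pi_4$, is the genuine obstacle: unlike ``at most one'' it is not a formal consequence of a single $\mathrm{Ext}$-computation, and I expect it to require either the explicit list of indecomposables of $L^{\perp}$ compatible with a prescribed triangulation of the pentagon $\Pi_5$, or a direct traversal of the (finite but sizeable) $\mathrm{Ext}$-hammock of $L$ in Figure \ref{Hammocks}(b). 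The case $i=1$ runs identically once $[i+6,i+4]_R=[7,5]_R$ is replaced by $\rho([7,5]_R)=[5,7]_B$ and one uses that $\tau$ and the minimal clockwise rotations carry the colour-orientation twist near the first slice, so that the relations $B=\tau(A)$ and $L\in\overline{\mathcal{T}}$ persist verbatim.
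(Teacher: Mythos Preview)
Your ``at most one'' half and the reduction of $\rho(L)$ to $L$ are correct and in line with the paper. The gap is in the ``at least one'' half. The inference you draw from Theorem~\ref{thmbmrrt} after Calabi--Yau reduction is not valid: knowing that $A$ and $B$ are the two complements of \emph{some} almost complete cluster tilting object of $L^{\perp}$ does not force every cluster tilting object of $L^{\perp}$ to contain one of them --- in any cluster category of rank at least two there are exchange pairs avoided by many cluster tilting objects. So the reduction route, as you have stated it, does not close the argument; you are right to flag this step as the obstacle, but the specific deduction you wrote down is actually false, not merely incomplete.

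The paper's proof is precisely the ``direct traversal of the $\mathrm{Ext}$-hammock'' that you list as an alternative, and it is shorter than you anticipate. One checks on $\Gamma$ that every vertex $X$ outside the $\mathrm{Ext}$-hammock of $L$ and different from $A$ and $B$ has an $\mathrm{Ext}$-hammock containing neither $A$ nor $B$; in other words, both $A$ and $B$ are compatible with every $L$-compatible indecomposable. Since you have already verified $\mathrm{Ext}^1_\Pi(L,A)=\mathrm{Ext}^1_\Pi(L,B)=0$, a cluster configuration $\mathcal{T}_L$ omitting both $A$ and $B$ would then admit $\mathcal{T}_L\cup\{A\}$ as a rigid set of seven indecomposables, contradicting maximality. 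The same hammock check also yields the complement clause directly, without passing through Theorem~\ref{thmbmrrt}: removing $A$ from $\mathcal{T}_L$ and inserting $B$ again gives a cluster configuration.
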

\begin{proof}
Let $i\neq1$ and consider the $\mathrm{Ext}$-hammock of $L$ in $\Gamma$. 
Since $L$ is not in the first slice of $\Gamma$ the diagonals triangulating $\Pi_4$ have the same
color as $L$. Then one can check that 
all $\mathrm{Ext}$-hammocks of objects outside 
the $\mathrm{Ext}$-hammock of $L$, which are different from $[i,i-2]_R$ and
$[i-1,i-3]_R$, never contain single diagonals inside the quadrilateral $\Pi_4$ in $\Pi.$
Thus, by maximality we deduce that all cluster tilting 
sets containing $L$ necessarily also contain 
one of the diagonals inside $\Pi_4$.
Taking one diagonal triangulating $\Pi_4$ rules out the other, thus the two 
single diagonals triangulating
$\Pi_4$ are complements to each other.
For $i=1$, $L$ is in the first slice of $\Gamma$.  Then one can proceed
as before adjusting the colour of the diagonal triangulating $\Pi_4$.
\end{proof}

In view of the next result, we point out that
the short single diagonals of Lemma \ref{l1}, 
triangulating $\Pi_4$ and neighbouring $L$ in $\Gamma$,
are displayed in filled light grey heptagons in Figure \ref{tilt}(a)-(n). 

\begin{lemma}\label{tilt_fig}
Every six-tuple of diagonals of Figure \ref{tilt}(a)-(n) 
determines a cluster configuration of $\Pi$. 
\end{lemma}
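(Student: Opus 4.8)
The plan is to check directly, for each of the fourteen six-tuples drawn in Figure \ref{tilt}(a)--(n), that its diagonals are pairwise distinct and that every pair of them has vanishing $\mathrm{Ext}^1_\Pi$; by the definition of a cluster configuration this is exactly what has to be proved, and by Remark \ref{2cy} it is enough to test unordered pairs (the vanishing of $\mathrm{Ext}^1_\Pi(D,D)$ being automatic, since a diagonal never lies on its own curves). The engine of the verification is Proposition \ref{int}: for vertices $D_X,D_Y$ of $\Gamma$ the number $\mathrm{dim}_k(\mathrm{Ext}^1_\Pi(D_X,D_Y))$ equals the number of curves of $D_X$ passing through $D_Y$, and for $D_X$ in the first slice of $\Gamma$ these curves, and the heptagons they meet, are displayed explicitly in Figure \ref{Hammocks}(a)--(f). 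So, given a pair $\{D_i,D_j\}$ occurring in one of the configurations, I would first $\tau$-shift --- harmlessly, since $\mathrm{Ext}$-hammocks are $\tau$-invariant, as already used in the proof of Proposition \ref{int} --- so that $D_i$ lands in the first slice, then read off $C_1(D_i),\dots,C_r(D_i)$ from the relevant panel of Figure \ref{Hammocks} and confirm that $D_j$ is not among the coloured heptagons there, i.e.\ that $D_j$ lies outside the $\mathrm{Ext}$-hammock of $D_i$.

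To keep the casework finite I would exploit the symmetries of the model before computing anything. The translation $\tau$ and the colour-and-orientation swap $\rho$ are both automorphisms of the stable translation quiver $\Gamma$, and by Proposition \ref{int} the numbers $\mathrm{dim}_k(\mathrm{Ext}^1_\Pi(D_X,D_Y))$ depend only on $\Gamma$; hence $\tau$ and $\rho$ carry cluster configurations to cluster configurations, and it suffices to treat one representative in each orbit of the fourteen panels and, within a configuration, one representative in each orbit of diagonals under its stabiliser. Each six-tuple in Figure \ref{tilt} contains a long single red diagonal $L=[i,i+4]_R$ together with one of the two short single red diagonals triangulating the quadrilateral $\Pi_4$ cut off by $L$ (Lemma \ref{l1}); the remaining diagonals are the single and paired coloured oriented diagonals shown in the figure, filling --- with the colours and orientations determined by the position of $L$ in $\Gamma$ --- the pentagonal regions cut off by $L$ and $\rho(L)$. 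I would then split the pairwise checks into three groups: (i) $L$ against the other five; (ii) the short $\Pi_4$-diagonal against the other four; (iii) the remaining pairs. Groups (i) and (ii) are handled by Proposition \ref{cuts}, once one reads off Figure \ref{Hammocks}(a),(b),(e),(f) that none of the other diagonals enters the smaller region bounded by $L$, respectively by the short single diagonal, and $\partial\Pi$. Group (iii) is handled exactly as in the proof of Lemma \ref{cor}: the $\mathrm{Ext}$-hammock of each of the remaining diagonals stays within the region cut off by $L$ that contains it and does not reach across $L$, so no two of them --- and none of them and $L$ --- have non-vanishing extension.

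After this reduction only a handful of genuine pair-checks remain, and for each of the fourteen panels one records the short list of which curve of which diagonal could a priori reach which other diagonal and observes from Figure \ref{Hammocks} that the coloured heptagons of $D_i$'s curves always avoid the remaining $D_j$. One point must be watched throughout: in the first slice of $\Gamma$ a $\tau$-shift forces a simultaneous change of colour and orientation (cf.\ the definitions in \S\ref{rhotau} and \S\ref{rotations}), so whenever, after shifting, one of the six diagonals lands in that slice the colours of the $\Pi_4$- and $\Pi_5$-diagonals have to be adjusted accordingly, just as in Lemmas \ref{cor} and \ref{l1}. I expect this bookkeeping to be the whole difficulty of the lemma: there is no conceptual shortcut, because the coloured oriented model is genuinely finer than the unoriented $A_4$-picture of Corollary \ref{corproj} --- a cluster configuration has six diagonals, more than the four in a triangulation of a heptagon, precisely because diagonals of different colours, or of the ``wrong'' orientation, can overlap geometrically while still having vanishing $\mathrm{Ext}^1_\Pi$ --- so projecting to $\mathcal{C}_{A_4}$ yields only a necessary condition, and the heart of the argument is the case-by-case comparison of Figure \ref{tilt} with Figure \ref{Hammocks}, made manageable only by the prior symmetry reduction.
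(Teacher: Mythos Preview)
Your proposal is correct and follows essentially the same approach as the paper: the paper's proof is a one-line instruction to verify, for each six-tuple and each choice of the short single diagonal from Lemma \ref{l1}, that the highlighted diagonals have pairwise vanishing $\mathrm{Ext}^1_\Pi$, and you have spelled out in detail how to carry out that verification using Proposition \ref{int}, Proposition \ref{cuts}, and the explicit hammocks in Figure \ref{Hammocks}. Your symmetry reductions and the three-group split are organisational refinements rather than a different argument.
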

\begin{proof}
For each choice of a short single diagonal of Lemma \ref{l1}
triangulating $\Pi_4$ and neighbouring $L$ in $\Gamma$,
the claim can be verified by checking 
that the diagonals in the highlighted heptagons 
have no extension among each other.
\end{proof}
In the following, we refer to the collection of cluster configurations of
 Figure \ref{tilt}(a)-(n) as the
{\em second fundamental family} of  cluster configurations of $\Pi$, and we 
denote this family by $\mathcal{F}_2$.

\begin{figure}[H]
\centering
\psfragscanon
\psfrag{a}[][][0.75]{(a)}
\psfrag{b}[][][0.75]{(b)}
\psfrag{c}[][][0.75]{(c)}
\psfrag{d}[][][0.75]{(d)}
\psfrag{e}[][][0.75]{(e)}
\psfrag{f}[][][0.75]{(f)}
\psfrag{g}[][][0.75]{(g)}
\psfrag{h}[][][0.75]{(h)}
\psfrag{i}[][][0.75]{(i)}
\psfrag{l}[][][0.75]{(l)}
\psfrag{m}[][][0.75]{(m)}
\psfrag{n}[][][0.75]{(n)}

\begin{subfigure}[b]{0.42\textwidth}
\includegraphics[width=\textwidth]{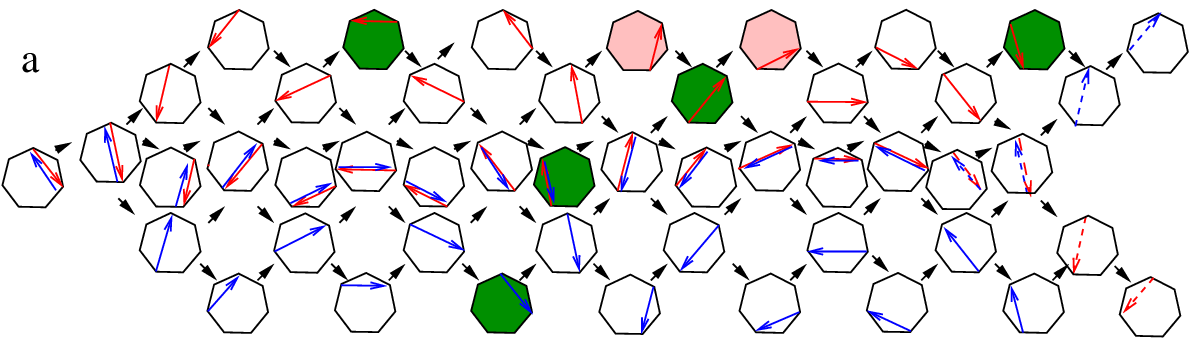}
\end{subfigure}\qquad 
\begin{subfigure}[b]{0.42\textwidth}
\includegraphics[width=\textwidth]{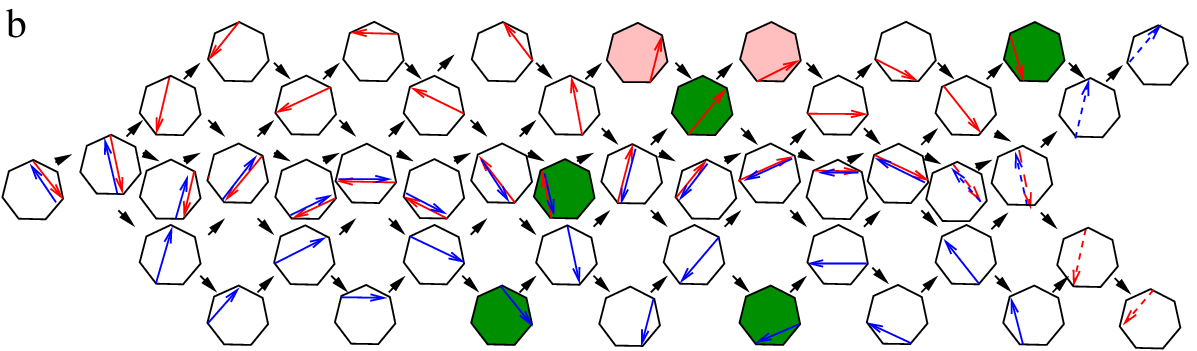}
\end{subfigure}\qquad   
\begin{subfigure}[b]{0.42\textwidth}
\includegraphics[width=\textwidth]{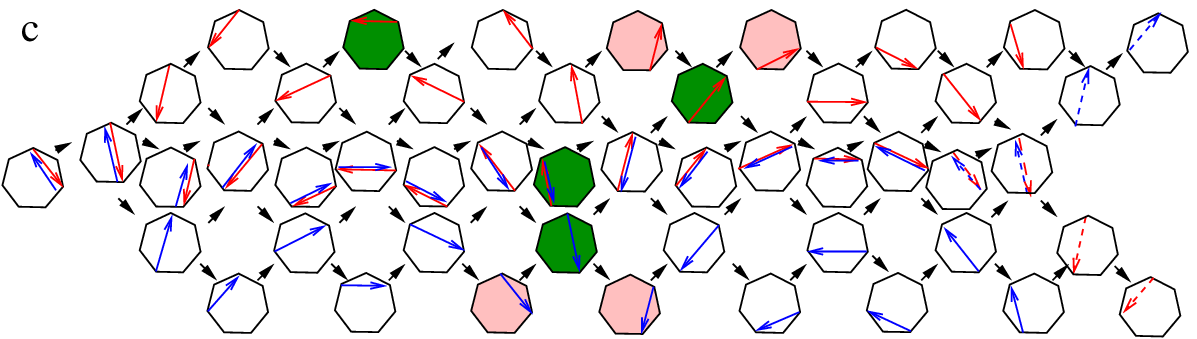}
\end{subfigure}\qquad 
\begin{subfigure}[b]{0.42\textwidth}
\includegraphics[width=\textwidth]{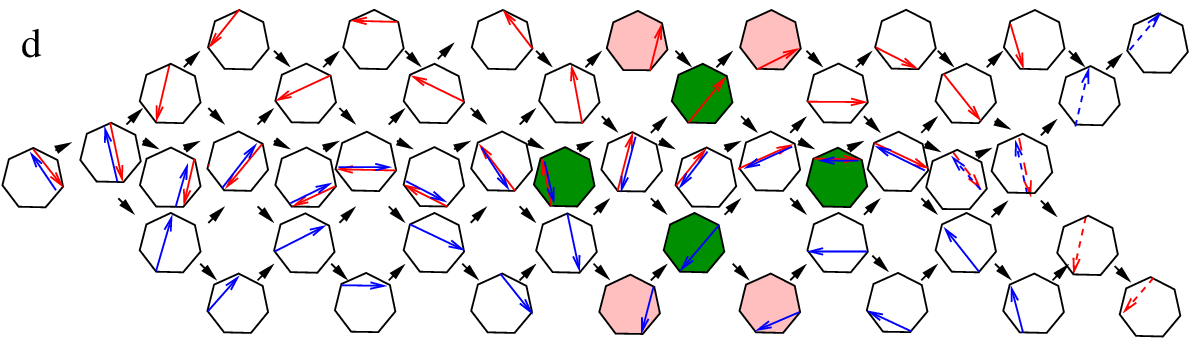}
\end{subfigure}\qquad  
\begin{subfigure}[b]{0.42\textwidth}
\includegraphics[width=\textwidth]{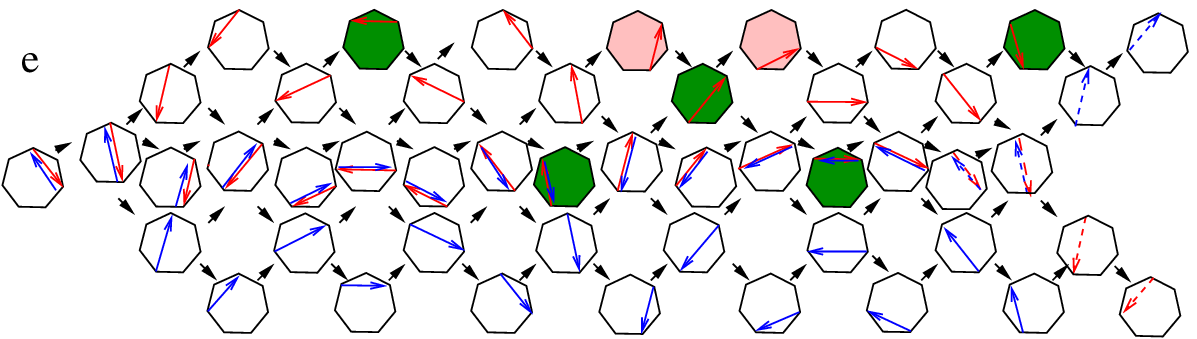}
\end{subfigure}\qquad 
\begin{subfigure}[b]{0.42\textwidth}
\includegraphics[width=\textwidth]{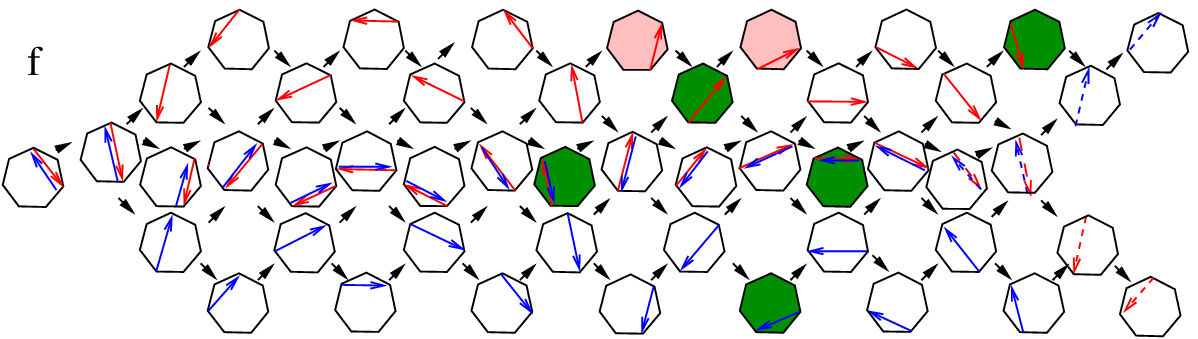}
\end{subfigure}\qquad 
\begin{subfigure}[b]{0.42\textwidth}
\includegraphics[width=\textwidth]{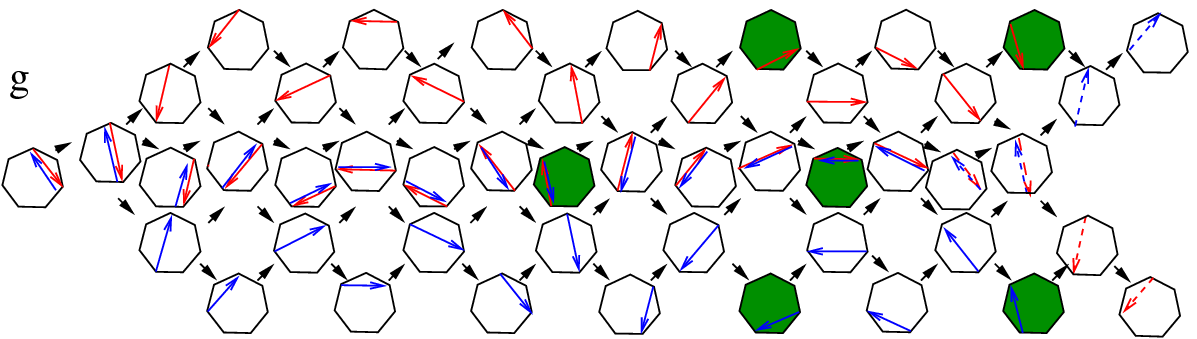}
\end{subfigure}\qquad 
\begin{subfigure}[b]{0.42\textwidth}
\includegraphics[width=\textwidth]{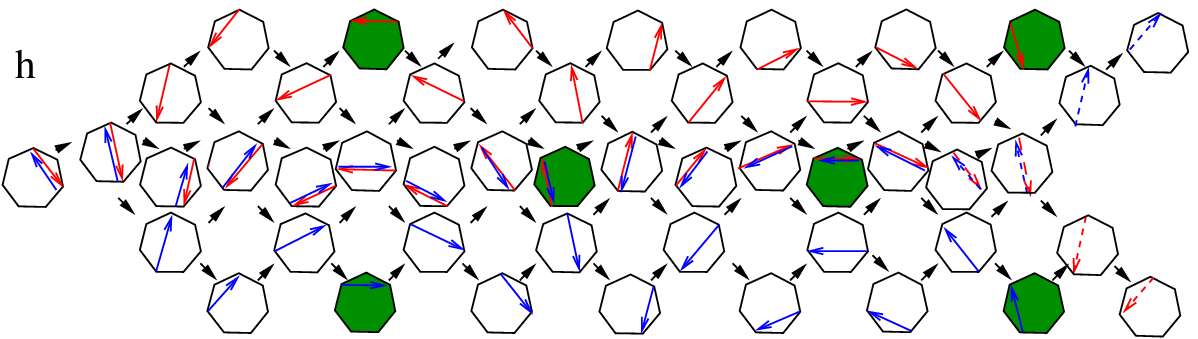}
\end{subfigure}\qquad 
\begin{subfigure}[b]{0.42\textwidth}
\includegraphics[width=\textwidth]{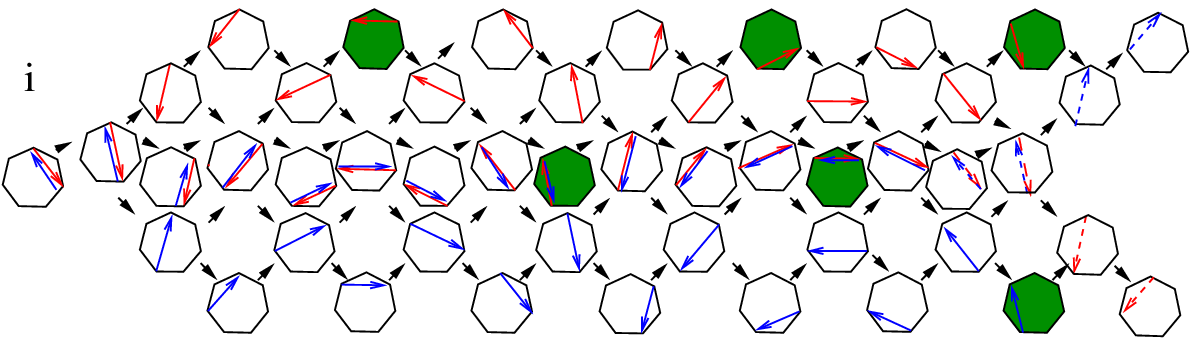}
\end{subfigure}\qquad 
\begin{subfigure}[b]{0.42\textwidth}
\includegraphics[width=\textwidth]{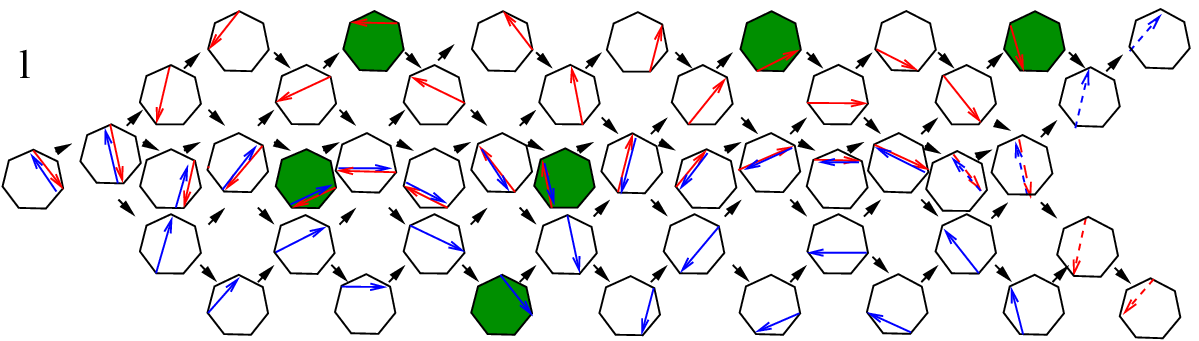}
\end{subfigure}\qquad 
\begin{subfigure}[b]{0.42\textwidth}
\includegraphics[width=\textwidth]{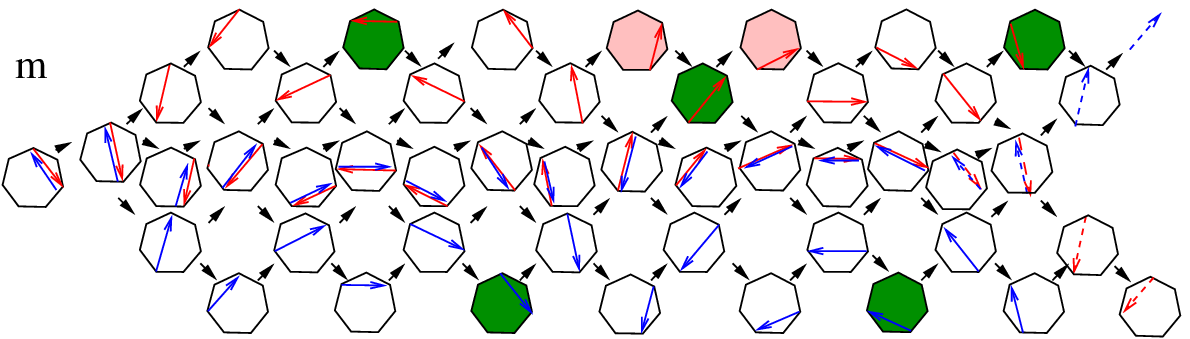}
\end{subfigure}\qquad 
\begin{subfigure}[b]{0.42\textwidth}
\includegraphics[width=\textwidth]{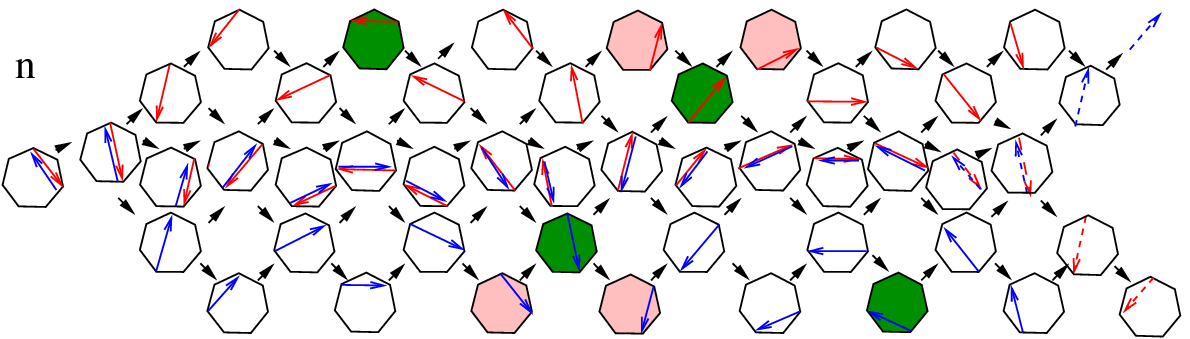}
\end{subfigure}\qquad 
\caption{The second fundamental family of cluster configurations of $\Pi$.}
\label{tilt}
\end{figure}

\subsection{Symmetries in $\Pi$ leading to cluster 
configurations}\label{sigma}

We determine two symmetries in $\Pi$ leading to 
cluster configurations.
One symmetry simply switches colours and orientations of 
the coloured oriented diagonals of a given cluster configuration.
The second one
arises from a left-right symmetry of $\Gamma$, and 
corresponds to a reflection in $\Pi$.

There are two reasons why these symmetries are important. 
First, using these symmetries 
we can deduce all cluster configurations starting from the 
sets in $\mathcal{F}_2$.
Second, knowing how a cluster configuration behaves under 
mutation, allows to understand 
how the symmetric ones behave. 

Let $c\in\{R,B\}$. For $i\in\mathbb Z/7\mathbb Z$ let
$h_i$ be the line in $\Pi$ passing through $i$ and the 
middle point of $i+3,i+4$.
On all coloured oriented
diagonals different then $[i\pm1,i\mp1]_c$, let 
$\sigma_i:\Pi\rightarrow\Pi$ be the reflection in $\Pi$ along 
$h_i$ followed by a switch of orientation. Otherwise,
$\sigma_i([i\pm1,i\mp1]_c):=\rho([i\pm1,i\mp1]_c).$

In Figure \ref{g'} we illustrate on the left the cluster configuration (g) 
and on the right we show the cluster configuration obtained 
after applying $\sigma_6$ to it.
 
\begin{figure}[H]
\centering
\psfragscanon
\psfrag{g}[][][0.75]{(g)}
\psfrag{h}[][][0.75]{(g')}
\begin{subfigure}[b]{0.42\textwidth}
\includegraphics[width=\textwidth]{tilt_f.eps}
\end{subfigure}\qquad 
\begin{subfigure}[b]{0.42\textwidth}
\includegraphics[width=\textwidth]{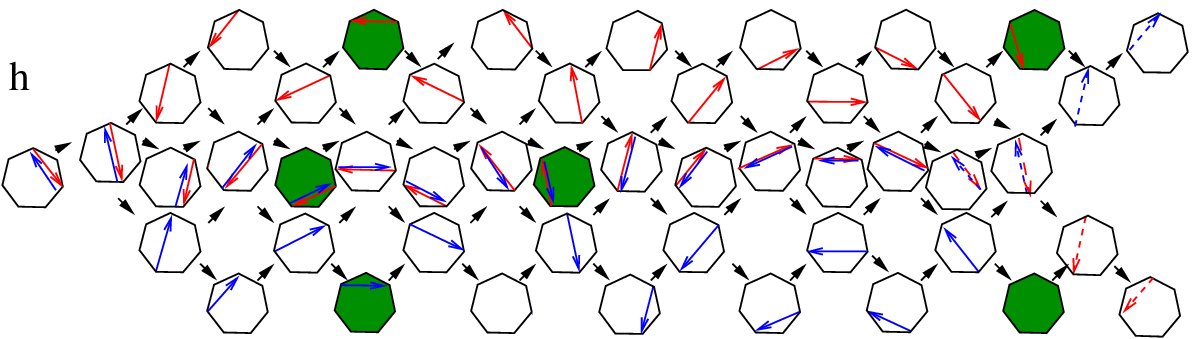}
\end{subfigure}\qquad \caption{$\sigma$-symmetric cluster configurations of $\Pi$.}
\label{g'}
\end{figure}

\begin{lemma}\label{l23}
Let $\mathcal{T}$ be a cluster configuration belonging to $\mathcal{F}_2$.
Then $\rho(\mathcal{T})$, and $\sigma_6(\mathcal{T})$ are also
cluster configurations.
\end{lemma}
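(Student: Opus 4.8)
The plan is to verify that both transformations $\rho$ and $\sigma_6$ preserve the vanishing of $\mathrm{Ext}^1_\Pi$ pairwise, which is precisely the defining condition of a cluster configuration. The key tool is Proposition \ref{int}: $\mathrm{dim}_k(\mathrm{Ext}^1_\Pi(D_X,D_Y))$ equals the number of curves of $D_X$ meeting $D_Y$ in $\Gamma$, so it suffices to show each map is compatible (up to a symmetry of $\Gamma$) with the structure of curves and $\mathrm{Ext}$-hammocks.

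For $\rho$, the argument is essentially formal: by the definition in Subsection \ref{rhotau}, $\rho$ is an automorphism of $\Gamma^7_{1,2,2}$ as a stable translation quiver (this is built into the construction, and was used in Lemma \ref{propT} and Theorem \ref{isoar}), and it commutes with $\tau$. Since the mesh category of $\Gamma$ models $\mathcal{C}_{E_6}$ by Theorem \ref{isoE6}, $\rho$ induces an auto-equivalence of $\mathcal{C}_{E_6}$, hence preserves all $\mathrm{Ext}^1$-spaces: $\mathrm{Ext}^1_\Pi(\rho(D_i),\rho(D_j))\cong\mathrm{Ext}^1_\Pi(D_i,D_j)=0$. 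Therefore $\rho(\mathcal{T})$ is a cluster configuration whenever $\mathcal{T}$ is (in fact for any cluster configuration, not just those in $\mathcal{F}_2$). Concretely, one also checks $\rho$ sends the curves $C_k(D_X)$ to the curves $C_k(\rho(D_X))$, which was already noted in Subsection \ref{curves} for the generators $[1,6]_R$ and $[1,5]_R$.

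For $\sigma_6$, the point is that the left–right reflection of $\Gamma$ about a suitable vertical axis is an isomorphism of stable translation quivers onto $\Gamma$ with $\tau$ replaced by $\tau^{-1}$ (equivalently, it is an anti-automorphism intertwining the translation with its inverse); geometrically on $\Pi$ this is exactly the reflection along $h_6$ composed with a switch of orientation, with the stated correction on the two short diagonals $[i\pm1,i\mp1]_c$ near vertex $6$ which sit on the boundary of the relevant hammocks. Because a reflection intertwining $\tau\leftrightarrow\tau^{-1}$ exchanges front and back $\mathrm{Ext}$-hammocks, and these coincide in the $2$-Calabi–Yau category $\mathcal{C}_{E_6}$ by Remark \ref{2cy}, the map $\sigma_6$ still preserves the relation ``$D_Y$ lies in the $\mathrm{Ext}$-hammock of $D_X$'' and, more precisely, preserves the number of curves of $D_X$ through $D_Y$. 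Hence $\mathrm{Ext}^1_\Pi(\sigma_6(D_i),\sigma_6(D_j))=0$, so $\sigma_6(\mathcal{T})$ is a cluster configuration.

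I expect the main obstacle to be the bookkeeping at vertex $6$: the naive reflection-plus-orientation-switch is not literally an automorphism of $\Gamma$ because of the identifications in the first/last slice (the M\"obius behaviour of the single diagonals), and one must check by inspection — using the explicit curves and hammocks drawn in Figures \ref{Exts} and \ref{Hammocks}, and the case distinctions already made in Lemmas \ref{cor}, \ref{l1} and \ref{tilt_fig} for diagonals in the first slice — that the correction $\sigma_6([i\pm1,i\mp1]_c):=\rho([i\pm1,i\mp1]_c)$ is exactly what is needed for the six diagonals of each configuration in $\mathcal{F}_2$ (Figure \ref{tilt}(a)--(n)) to remain pairwise $\mathrm{Ext}^1$-orthogonal. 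Since $\mathcal{F}_2$ is a finite explicit list and the hammocks of its members are already displayed, this reduces to a finite verification; I would carry it out by applying $\sigma_6$ to each of the fourteen configurations and confirming against the curve-intersection criterion of Proposition \ref{int}, exactly as illustrated for configuration (g) in Figure \ref{g'}.
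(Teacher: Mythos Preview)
Your proposal is correct and ultimately reaches the same conclusion as the paper, but the paper's own proof is far terser: it simply says to apply $\rho$ and $\sigma_6$ to each configuration of Lemma~\ref{tilt_fig} and observe, from the shape of the $\mathrm{Ext}$-hammocks, that one again obtains cluster configurations. In other words, the paper treats the lemma as a direct finite check against the hammocks of Figure~\ref{Exts}/\ref{Hammocks}, without articulating any conceptual reason.

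Your approach supplies that missing conceptual layer. For $\rho$ you note it is an automorphism of the stable translation quiver $\Gamma$, hence induces an auto-equivalence of the mesh category $\mathcal{C}_{E_6}$ and automatically preserves all $\mathrm{Ext}^1$-spaces; this is cleaner than case-checking and in fact yields the statement for \emph{every} cluster configuration, not only those in $\mathcal{F}_2$. For $\sigma_6$ you interpret it as the left--right reflection of $\Gamma$ (an anti-automorphism intertwining $\tau\leftrightarrow\tau^{-1}$), and invoke Remark~\ref{2cy} to conclude that exchanging front and back $\mathrm{Ext}$-hammocks is harmless in a $2$-Calabi--Yau category. This is a genuine explanation of \emph{why} the check in the paper succeeds. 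You are right that the M\"obius identification in the first/last slice prevents $\sigma_6$ from being a literal anti-automorphism on the nose, and that the ad hoc correction on $[i\pm1,i\mp1]_c$ must be verified to do the job; your plan to close this by the same finite inspection the paper performs is appropriate, and Figure~\ref{g'} already illustrates one instance. So your argument is strictly more informative, at the cost of a little extra bookkeeping that the paper sidesteps by jumping straight to the verification.
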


\begin{proof}
Apply the map $\rho$, resp. $\sigma_6$ to the cluster configuration of 
Lemma \ref{tilt_fig}. Because of the
shape of the $\mathrm{Ext}$-hammocks one indeed produces
cluster configurations. 
\end{proof}
Notice that  the set $\rho(\mathcal{T})$ of a cluster 
configuration $\mathcal{T}$ is 
an elements of the $\tau$-orbit of $\mathcal{T}$, 
while $\sigma_i(\mathcal{T})$ does not belong to any
$\tau$-orbit of a cluster configuration of $\mathcal{F}_2$ (nor of $\mathcal{F}_1$).

\subsection{Classification of cluster tilting sets of $\mathcal{C}_{E_6}$ }\label{tilte6}

From \cite[Prop. 3.8]{FZ_y} and \cite[Thm. 4.5]{BMRRT} 
we know that there are 833 cluster tilting sets 
in $\mathcal{C}_{E_6}$.
In the next result we give a complete geometric
classification of all cluster tilting sets in $\mathcal{C}_{E_6}$ in terms of cluster
configurations of $\Pi$.

\begin{thm} \label{count} 
 In $\Pi$
\begin{itemize}
 \item 350 different cluster configurations have one long paired diagonal, and 
  they arise from $\mathcal{F}_1$ through $\tau$.
 \item  483 other cluster configurations arise from  
 $\mathcal{F}_2$ through  $\sigma$ and $\tau$.
\end{itemize}
\end{thm}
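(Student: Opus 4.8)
The plan is to count separately the cluster configurations of $\Pi$ that contain a long paired diagonal and those that do not, using Lemma \ref{cor} for the former and the explicit family $\mathcal{F}_2$ together with the symmetries $\tau$ and $\sigma$ for the latter, and to fix the second number using the known total of $833$ cluster tilting sets in $\mathcal{C}_{E_6}$.

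\emph{Configurations with a long paired diagonal.} By the second part of Lemma \ref{cor}, every cluster configuration of $\Pi$ containing a long paired diagonal $L_P=[i,i+3]_P$ arises as $\tau^k(\mathcal{T}_{L_P})$, and by the first part such a configuration is obtained from $L_P$ by triangulating the quadrilateral $\Pi_4$ with one of its two short paired diagonals and triangulating each of the two copies of the pentagon $\Pi_5$ by single diagonals of the colour prescribed by the position of $L_P$. This gives $2\cdot 5\cdot 5=50$ configurations for each choice of $L_P$ (two short paired diagonals for $\Pi_4$, five triangulations for each pentagon). A configuration of this shape contains precisely one long paired diagonal, namely $L_P$, so configurations attached to different choices of $L_P$ are pairwise distinct and no configuration is counted twice. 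Since $\Pi$ has $7$ long paired diagonals, there are exactly $7\cdot 50 = 350$ cluster configurations with a long paired diagonal, and these are exactly the $\tau$-shifts of the members of $\mathcal{F}_1$.

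\emph{Configurations without a long paired diagonal.} Since $\rho(\mathcal{T})$ lies in the $\tau$-orbit of any cluster configuration $\mathcal{T}$ (noted after Lemma \ref{l23}), the collection $\langle\tau,\sigma\rangle\cdot\mathcal{F}_2$ of configurations obtained from $\mathcal{F}_2$ by $\tau$ and $\sigma$ is also stable under $\rho$; by Lemma \ref{tilt_fig} and Lemma \ref{l23} it consists of cluster configurations, none of which contains a long paired diagonal, so it is disjoint from the $350$ configurations above. It remains to show that \emph{every} cluster configuration $\mathcal{T}$ without a long paired diagonal lies in $\langle\tau,\sigma\rangle\cdot\mathcal{F}_2$. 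Using the description of the $\mathrm{Ext}$-hammocks in Subsection \ref{start_end} and Proposition \ref{cuts}, one first checks that such a $\mathcal{T}$ must contain a long single diagonal. After applying a suitable power of $\tau$, and $\rho$ if necessary to make its colour red, we may assume $\mathcal{T}$ contains $L=[i,i+4]_R$ in a fixed position. By Lemma \ref{l1}, $\mathcal{T}$ then contains exactly one of the two short single diagonals triangulating $\Pi_4$ next to $L$ in $\Gamma$, and likewise exactly one of the two next to $\rho(L)$; running through these four possibilities and, for each, reading off from the $\mathrm{Ext}$-hammocks which diagonals can complete $\mathcal{T}$, one recovers precisely the six-tuples of Figure \ref{tilt}(a)--(n), up to the reflection $\sigma$. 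Hence $\mathcal{T}\in\langle\tau,\sigma\rangle\cdot\mathcal{F}_2$.

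Putting the two parts together, the cluster configurations of $\Pi$ are the disjoint union of the $350$ configurations with a long paired diagonal and the set $\langle\tau,\sigma\rangle\cdot\mathcal{F}_2$ of those without one. As $\mathcal{C}_{E_6}$ has $833$ cluster tilting sets in total, the latter set has $833-350=483$ elements, which proves the two assertions. I expect the delicate point to be the exhaustiveness step in the previous paragraph: one has to verify, through the finite but substantial case analysis organised around Figure \ref{tilt}, that no configuration without a long paired diagonal is overlooked, that a configuration without a long paired diagonal always contains a long single one, and that each way of completing a normalised long single diagonal appears, up to $\sigma$, among those displayed.
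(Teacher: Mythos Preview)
Your argument for the first bullet (the $350$ configurations containing a long paired diagonal) is essentially the same as the paper's: both use Lemma \ref{cor} to obtain $2\cdot 5\cdot 5=50$ configurations per long paired diagonal and multiply by $7$.

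For the second bullet, however, you take a genuinely different route. The paper does \emph{not} argue exhaustiveness of $\langle\tau,\sigma\rangle\cdot\mathcal{F}_2$ directly. Instead it performs a direct enumeration of the configurations produced from $\mathcal{F}_2$ by $\tau$ and $\sigma$, split according to the number of short paired diagonals: exactly one short paired diagonal (arising from Figure \ref{tilt}(a)--(c), giving $224$), exactly two short paired diagonals (from (d)--(l), giving $175$), and no paired diagonals at all (from (m),(n), giving $84$). These add to $483$, and the fact that $350+483$ matches the known total $833$ then yields exhaustiveness as a corollary rather than as an input. Your approach is dual: you argue exhaustiveness first (every configuration without a long paired diagonal lies in $\langle\tau,\sigma\rangle\cdot\mathcal{F}_2$) and then obtain the number $483$ by subtracting $350$ from $833$.

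Both strategies are valid, but they trade difficulties. The paper's counting requires care to avoid double-counting among the $\tau$- and $\sigma$-translates of the various families (a)--(n), but never needs the claim that a configuration without a long paired diagonal must contain a long single one. Your approach needs exactly that claim as its starting point, followed by the case analysis you describe; you are right to flag this as the delicate step, since it is not established elsewhere in the paper and amounts to a non-trivial inspection of Ext-hammocks. If you want your argument to stand on its own, you should at least sketch why a configuration consisting only of short paired and short single diagonals cannot be Ext-orthogonal of size six.
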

\begin{cor}\label{cor_count}
In $\Pi$:
\begin{itemize}
 \item 224  cluster configurations
have precisely one short paired diagonal,
 \item 175  cluster configurations have precisely two short paired diagonals,
 \item 84  cluster configurations have no paired diagonals of $\Pi$.
\end{itemize}
All these cluster configurations are different.
\end{cor}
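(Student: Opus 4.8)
The plan is to deduce the corollary from the classification of Theorem \ref{count}, which partitions the $833$ cluster configurations of $\mathcal{C}_{E_6}$ into the $350$ that contain a long paired diagonal, obtained from $\mathcal{F}_1$ by applying powers of $\tau$, and the $483$ that contain none, obtained from $\mathcal{F}_2$ by applying the symmetry $\sigma$ and powers of $\tau$. The first point to record is that both generating operations preserve the type of each diagonal: $\tau$ is the anticlockwise rotation of $\Pi$, composed on the diagonals of the first slice of $\Gamma$ with the colour-orientation swap $\rho$, and $\rho$ fixes every paired diagonal and sends single diagonals to single diagonals; likewise each reflection $\sigma_i$ is followed by an orientation switch, again adjusted by $\rho$ on the exceptional single diagonals. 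Since a reflection and a rotation are isometries of $\Pi$, it follows that $\tau$ and $\sigma$ preserve, in any configuration, the number of paired diagonals of each length, so these numbers are constant along every $\langle\tau,\sigma\rangle$-orbit and it suffices to read them off on the finitely many representatives of $\mathcal{F}_1$ and $\mathcal{F}_2$.

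Since the $483$ configurations contain no long paired diagonal, for them the number of short paired diagonals equals the total number of paired diagonals. I would therefore run through the pictures (a)--(n) of Figure \ref{tilt}, together with the internal choices of short single diagonals allowed by Lemma \ref{l1} and of the two single triangulations of the copies of $\Pi_5$ occurring in them (as in the proofs of Lemmas \ref{tilt_fig} and \ref{cor}), and count the paired diagonals of each resulting configuration. This sorts $\mathcal{F}_2$ into sub-families $\mathcal{F}_2^{(0)},\mathcal{F}_2^{(1)},\mathcal{F}_2^{(2)}$ according to whether the configuration carries $0$, $1$ or $2$ short paired diagonals; one checks along the way that none carries three or more. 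By the previous paragraph, applying $\sigma$ and all powers of $\tau$ to $\mathcal{F}_2^{(k)}$ yields exactly the set $\mathcal{S}_k$ of configurations among the $483$ having $k$ short paired diagonals, so the $483$ decompose as $\mathcal{S}_0\sqcup\mathcal{S}_1\sqcup\mathcal{S}_2$.

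To compute $|\mathcal{S}_k|$ one determines, for each representative, the size of its $\langle\tau,\sigma\rangle$-orbit --- generically $14$, but smaller for configurations fixed by a nontrivial rotation or by some $\sigma_i$, and one must further detect when two pictures of Figure \ref{tilt}, or two internal choices, lie in a common orbit --- and sums over the representatives of $\mathcal{F}_2^{(k)}$. This is exactly the orbit bookkeeping underlying Theorem \ref{count}; performed separately on the three sub-families it gives $|\mathcal{S}_1|=224$, $|\mathcal{S}_2|=175$ and $|\mathcal{S}_0|=84$, consistent with $224+175+84=483$ and $350+224+175+84=833$. Finally $\mathcal{S}_0,\mathcal{S}_1,\mathcal{S}_2$ are pairwise disjoint because the number of short paired diagonals of a configuration is well defined, and each is disjoint from the $350$ configurations of $\mathcal{F}_1$, all of which contain a long paired diagonal; this yields the closing assertion.

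The step I expect to be the genuine obstacle is this orbit-size bookkeeping: pinning down, for every picture of Figure \ref{tilt} and every admissible internal choice, the stabilizer in $\langle\tau,\sigma\rangle$ and the coincidences between orbits, so that the three partial sums come out to exactly $84$, $224$ and $175$. It rests on the explicit Ext-hammock description of Propositions \ref{cuts} and \ref{int} and on Lemmas \ref{l1}, \ref{tilt_fig} and \ref{l23}; by contrast, the invariance of the short/long paired type under $\tau$ and $\sigma$ and the disjointness of the three classes are immediate.
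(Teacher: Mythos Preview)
Your proposal is correct and follows essentially the same route as the paper: the paper's proof of Corollary~\ref{cor_count} is merged with that of Theorem~\ref{count} and proceeds exactly by sorting the pictures (a)--(n) of Figure~\ref{tilt} according to the number of short paired diagonals they contain, then counting the internal choices from Lemma~\ref{l1} together with the contribution of $\sigma$, and multiplying by~$7$ for the $\tau$-shifts. The paper is simply more explicit where you remain schematic: it records that (a),(b),(c) are the one-short-paired cases and yield $224$; (d)--(l) are the two-short-paired cases and yield $175$; and (m),(n) are the no-paired cases yielding $28+56=84$. One small inaccuracy in your write-up: the ``single triangulations of the copies of $\Pi_5$'' are a feature of $\mathcal{F}_1$, not of $\mathcal{F}_2$; the only internal freedom in the pictures of Figure~\ref{tilt} is the choice of short single diagonal in the light grey heptagons governed by Lemma~\ref{l1}.
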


\begin{proof}[Proofs of Theorem \ref{count} and Corollary \ref{cor_count}]

The first part of the claim follows from Lemma \ref{cor}. In fact, 
we saw that for each long paired diagonal $L_p$ 
there are $25$ ways to triangulate
one of the two pentagons $\Pi_5$ with single coloured diagonals. 
Moreover, there are two ways to triangulate
$\Pi_4$ with short paired diagonals. Thus, each $L_p$ gives rise to $50$ 
different cluster configurations.
Since there are $7$ choices for $L_p$ in $\Pi$, the first claim follows.

For the second part of the claim the idea is to consider different cases, 
depending on the number of short
paired diagonals leading to cluster configurations.
First case: the only paired diagonal of $\mathcal{T}$ is a short one. 
Then $\mathcal{T}$ arises from the collection 
highlighted in (a),(b) or (c) in Figure \ref{tilt}, 
up to $\tau$-shifts and the $\sigma$-symmetry of Lemma \ref{l23}. 
Moreover, after Lemma \ref{l1} for each coloured oriented 
diagonal $[i,i-3]_c$, $c\in\{R,B\}$ in $\mathcal{T}$ 
there two possible choices of neighbouring short single diagonals in $\mathcal{T}$. 
Consequently, up to $\tau$-shifts, the there are 4 different cluster configurations
arising from  a collection of diagonals as in (a). Similarly for (b). 
The collection in (c) gives rise to  
8 different cluster configurations up to $\tau$-shifts, as there are 4 choices for
short single diagonals, and further 4 arise by taking the $\sigma$-symmetric case.

Summing up, the cluster configurations in (a), (b), (c) give 
rise to 224 different cluster configurations.

Second case: $\mathcal{T}$ has exactly two short paired diagonals. 
Then one distinguishes further into (d),(e) and (f) which have 
single coloured oriented 
diagonals of the form $[i,i+3]_c$, $c\in\{R,B\}$.
While the cluster configurations in (g),(i),(h) and (l) only 
contain short single diagonals of the form
$[i,i+2]_c$, $c\in\{R,B\}$. 
Proceeding as before, taking into account 
the symmetry $\sigma$ of Lemma \ref{l23}
one obtains the
claimed number.

In the third case we count the cluster configurations arising
from $\tau$-shifts of the cluster configurations in (m) and (n) 
having no paired diagonals.
As before we deduce that there are 28 the cluster configurations
 arising from $\tau$-shifts of (m) 
and 56 arising from $\tau$-shifts of the 4 cluster configurations in (n).
Together this gives 84 cluster configurations without paired diagonals.
\end{proof}

\subsection{The geometry of cluster configurations}\label{subconf}
In Section \ref{longpaired} we saw that cluster
configurations
in $\mathcal{F}_1$ correspond to triangulations 
with coloured oriented diagonals of
two copies of $\Pi$.  
Cluster configurations of $\mathcal{F}_2$
are not as simple to describe. In Theorem \ref{confthm} below 
we can give a general statement concerning the 
geometry of cluster configurations in $\Pi$.

\begin{thm}\label{confthm}
All 833 cluster tilting sets of $\mathcal{C}_{E_6}$ can be expressed as
configurations of six non-crossing coloured oriented single and paired  diagonals 
inside two heptagons.
\end{thm}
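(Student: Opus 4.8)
The plan is to reduce the statement to the classification already obtained in Theorem \ref{count}, so that almost all the work is bookkeeping about how the two fundamental families sit inside one or two heptagons. First I would recall that Theorem \ref{count} exhibits every one of the $833$ cluster tilting sets of $\mathcal{C}_{E_6}$ as $\tau^k\sigma^\epsilon$ applied to a member of $\mathcal{F}_1$ or $\mathcal{F}_2$; since $\tau$ acts as a rotation of $\Pi$ and $\sigma$ as a reflection-with-colour-switch, it suffices to prove the geometric non-crossing statement for the finitely many ``seed'' configurations in $\mathcal{F}_1$ and in Figure \ref{tilt}(a)--(n), because rotations and reflections of $\Pi$ send non-crossing configurations to non-crossing configurations (a diagonal $[i,j]_c$ crosses $[k,l]_{c'}$ iff the underlying unoriented diagonals cross, and crossing of unoriented diagonals is invariant under the dihedral action on $\Pi$). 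So the theorem follows once the two families are shown to be drawable as six pairwise non-crossing coloured oriented single and paired diagonals inside (at most) two heptagons.

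Next I would treat $\mathcal{F}_1$. By Lemma \ref{cor}, a configuration in $\mathcal{F}_1$ consists of a long paired diagonal $L_P=[i,i+3]_P$, a short paired diagonal $S_P$ triangulating the quadrilateral $\Pi_4$, and single coloured diagonals triangulating the two copies of the pentagon $\Pi_5$ — one copy triangulated red, the other blue. Here the ``two heptagons'' are genuinely used: one heptagon carries $L_P$, $S_P$ and the red triangulation of $\Pi_5$, the other carries $L_P$, $S_P$ and the blue triangulation of $\Pi_5$. Within each heptagon the chosen diagonals form (the coloured/oriented lift of) an honest triangulation of $\Pi$, hence are pairwise non-crossing; this is immediate from Caldero--Chapoton--Schiffler's description of $\mathcal{C}_{A_4}$ via the functor $\mathcal{C}_{E_6}\to\mathcal{C}_{A_4}$ of Corollary \ref{corproj}, since the underlying unoriented diagonals form a triangulation of the heptagon. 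Thus every configuration in $\mathcal{F}_1$ is visibly non-crossing inside two heptagons.

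For $\mathcal{F}_2$ the argument is less uniform: I would go through the twelve configurations (a)--(n) of Figure \ref{tilt} one at a time. For each, the six underlying unoriented diagonals either already form a non-crossing set inside a single heptagon, or split into two groups each forming a non-crossing set in its own heptagon (the colour of each group being determined by its position relative to the long single diagonal $L$, exactly as in Lemma \ref{l1}); in either case one reads off the required picture directly from the figure, checking only that no two of the displayed diagonals cross in an interior point. Finally, I would invoke Lemma \ref{l23}: applying $\rho$ or $\sigma_6$ to such a configuration permutes and recolours the diagonals but preserves the underlying crossing pattern, so the $\sigma$-symmetric and $\tau$-shifted configurations inherit the non-crossing two-heptagon description. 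Combining the counts — $350$ configurations from $\mathcal{F}_1$ and $483$ from $\mathcal{F}_2$, totalling $833$ — gives the result.

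The main obstacle I anticipate is the case analysis for $\mathcal{F}_2$: unlike $\mathcal{F}_1$, these configurations do not come from triangulations of a single polygon, and one must verify case by case that the six diagonals of each Figure \ref{tilt}(a)--(n) really can be partitioned into at most two mutually non-crossing, consistently coloured groups. This is where the geometric content of the theorem lies, and it is essentially a (finite but not entirely mechanical) inspection of the twelve pictures together with their colour assignments.
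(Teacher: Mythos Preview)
Your proposal is correct and follows essentially the same route as the paper: reduce via Theorem~\ref{count} to the two fundamental families, verify these are non-crossing in two heptagons, and then observe that $\sigma$ and $\tau$ preserve the non-crossing property. The one refinement the paper makes, which dissolves the ``main obstacle'' you anticipate, is that the splitting into two heptagons is uniform rather than case-by-case: one simply places the red single diagonals in one heptagon, the blue single diagonals in the other, and the paired diagonals in both. With this rule the non-crossing check for $\mathcal{F}_2$ becomes a single observation on Figure~\ref{tilt} rather than twelve separate partition arguments, so the inspection you flag as the geometric heart of the proof is lighter than you expect.
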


\begin{proof}
First, given a cluster configurations in $\mathcal{F}_1\cup\mathcal{F}_2$
we divide the coloured oriented diagonals inside two heptagons by colour. 
Paired diagonals appear in both
heptagons. 

Then we observe that all cluster configurations in $\mathcal{F}_1$ 
and $\mathcal{F}_2$ are crossing free. 
Moreover, the symmetry $\sigma$ 
produces new configurations of diagonals which are again
crossing free. Taking $\tau$-shifts 
only rotates the entire configurations inside the two heptagons, occasionally 
switching colours and orientation according to the action of $\tau$ inside $\Pi$. Hence the 
crossing free property is preserved under $\tau$-shifts and the claim follows.
\end{proof}
The cluster configuration
$\mathcal{T}=\{ [5,3]_R, [5,2]_R ,[5,1]_P,[5,6]_P, [3,5]_B,[2,5]_B\}$
is expressed in two heptagons 
in the center of Figure \ref{Exchange graph}, the numbering of the vertices of
one heptagon is highlighted in the figure. Paired diagonals 
appear in both heptagons and have labels.

The converse statement of Theorem \ref{confthm} is not true, as configurations of  
non-crossing coloured diagonals different then cluster configurations of $\Pi$
are not cluster tilting sets of $\mathcal{C}_{E_6}$.

\subsection{Mutations of cluster tilting objects in $\mathcal{C}_{E_6}$}

In this section we will see that in many cases it is possible to deduce the 
mutation process in $\mathcal{C}_{E_6}$ 
from the mutations process inside $\mathcal{C}_{A_4}$.
Moreover, we can deduce the mutation process for 
the families in $\mathcal{F}_1$ and $\mathcal{F}_2$ 
and taking $\tau$-shifts extend it for the remaining cluster configurations.

In the next definitions we 
indicate by $\overline{D}$ 
the unoriented single
diagonal corresponding to
a coloured oriented diagonal $D$ of $\Pi$.
\begin{de} 
Let $D_P$ be a paired oriented diagonal and let
$\mathcal{T}$ be a cluster configuration containing $D_P$.
The paired diagonal $D_P^*$ is the flip-complement of $D_P$, if
$\overline{D}_P$ and $\overline{D^*}_P$ are related by a flip.
\end{de}
\begin{de}
Let $D_S$ be a single coloured oriented diagonal and let
$\mathcal{T}$ be a cluster configuration containing $D_S$.
The single coloured oriented diagonal $D_S^*$  
is the {\em flip-complement} of $D_S$ in $\mathcal{T}$, if $\overline{D}_S$ and $\overline{D^*}_S$ 
are related by a flip and $\mathcal{T}\backslash D_S \cup {D_S^*}$ is a cluster configuration.
\end{de}
Notice that if a flip-complement exists then the colour and orientation
is uniquely determined by Lemma \ref{cor} and Lemma \ref{l1}.

\begin{prop}\label{378} 
Let $D_L$ be a long coloured oriented diagonal of $\Pi$ giving 
rise to a cluster configuration. 
\begin{itemize}
 \item If $D_L$ is paired: single coloured diagonals triangulating $\Pi_5$, and 
 paired diagonals triangulating $\Pi_4$
 have a flip-complement. 
 \item If $D_L$ is single: single diagonals triangulating $\Pi_4$
 have a flip-complement.
 \end{itemize}
 \end{prop}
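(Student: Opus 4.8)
The plan is to reduce everything to the two model families $\mathcal{F}_1$ and $\mathcal{F}_2$ and then read off the flip-complements directly from Lemma \ref{cor} and Lemma \ref{l1}. First I would record that the Ext-hammocks in $\mathcal{C}_{E_6}$ are $\tau$-invariant, so a $\tau$-shift of a cluster configuration is again a cluster configuration; moreover $\tau$ and $\rho$ act on underlying unoriented diagonals by a rotation and by the identity respectively, and both preserve the relation ``related by a flip''. Hence if $D\in\mathcal{T}$ has a flip-complement then so do $\tau(D)\in\tau(\mathcal{T})$ and $\rho(D)\in\rho(\mathcal{T})$. By Lemma \ref{cor} and Lemma \ref{l1} every cluster configuration containing a long paired (resp. long single) diagonal is a $\tau$-shift (resp. a $\tau$-shift composed with $\rho$) of a model in which $D_L=[i,i+3]_P$, resp. $D_L=[i,i+4]_R$, with $i\neq1$. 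So it is enough to treat these two models.

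For the paired model, take $D_L=L_P=[i,i+3]_P$ with $i\neq1$; by Lemma \ref{cor} the configuration $\mathcal{T}_{L_P}$ consists of $L_P$, a short paired diagonal $S_P$ triangulating $\Pi_4$, and a monochromatic triangulation of each of the two copies of $\Pi_5$ (one red, one blue). If $D$ is one of the two single diagonals triangulating, say, the red $\Pi_5$, then $\overline D$ is a diagonal of that pentagon, the two triangles of the triangulation adjacent to $\overline D$ span a quadrilateral contained in $\Pi_5$, and so the flip of $\overline D$ is again a diagonal of $\Pi_5$; recolouring the resulting triangulation red and leaving $L_P$, $S_P$ and the blue triangulation unchanged produces a cluster configuration by Lemma \ref{cor}, so this red diagonal is the flip-complement of $D$ (its colour and orientation being forced, as remarked after the definition). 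The blue copy of $\Pi_5$ is handled identically, or by applying $\rho$. If instead $D=S_P$, then $\Pi_4$ has exactly two diagonals, the flip of $\overline{S_P}$ is the underlying diagonal of the other short paired diagonal $S_P^*$ triangulating $\Pi_4$, and the proof of Lemma \ref{cor} already exhibits $\mathcal{T}_{L_P}\setminus S_P\cup S_P^*$ as a cluster configuration; so $S_P^*$ is the flip-complement of $S_P$.

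For the single model, take $D_L=L=[i,i+4]_R$ with $i\neq1$ and let $\Pi_4=\{i+4,i+5,i+6,i\}$ be the quadrilateral cut off by $L$. By Lemma \ref{l1} exactly one of the single red diagonals $[i,i+5]_R$ and $[i+6,i+4]_R$ triangulating $\Pi_4$ belongs to a configuration $\mathcal{T}_L$ containing $L$, and the two are complements of each other. Their underlying unoriented diagonals $(i,i+5)$ and $(i+4,i+6)$ are the two diagonals of the quadrilateral $\Pi_4$, hence related by a flip; and swapping them carries $\mathcal{T}_L$ to another cluster configuration by Lemma \ref{l1}. Therefore whichever of the two lies in $\mathcal{T}_L$ has the other as its flip-complement. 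The single blue diagonal $D_L=[i+4,i]_B$ and the first-slice case $i=1$ reduce to this by $\rho$ and by a $\tau$-shift.

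The step I expect to need the most care is verifying that, after flipping the underlying unoriented diagonal, the coloured oriented diagonal one re-attaches is forced and still falls within the configurations already classified. For the single diagonals inside $\Pi_4$ this is immediate from Lemma \ref{l1}. For the single diagonals inside $\Pi_5$ it relies on the elementary planar fact that a flip inside a triangulated pentagon never leaves the pentagon, combined with the part of Lemma \ref{cor} saying that an arbitrary monochromatic triangulation of $\Pi_5$ extends to a cluster configuration; the colour-orientation corrections that occur in the first slice are already absorbed into the $\tau$- and $\rho$-reductions and so create no further difficulty.
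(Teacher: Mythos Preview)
Your proposal is correct and follows essentially the same route as the paper: both arguments reduce the paired case to Lemma~\ref{cor} (any monochromatic triangulation of $\Pi_5$ together with a short paired diagonal in $\Pi_4$ completes $L_P$ to a cluster configuration, so the flip of a diagonal inside $\Pi_5$ or $\Pi_4$ still lands in such a configuration) and the single case to Lemma~\ref{l1}. Your version is more explicit about the $\tau$- and $\rho$-reductions that absorb the first-slice anomalies and about why the flip stays inside $\Pi_5$, points the paper's proof leaves implicit.
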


\begin{proof}
Let $D_L$ be a long coloured oriented 
diagonal dividing $\Pi$ into the quadrilateral $\Pi_4$ and the pentagon $\Pi_5$.
In Lemma \ref{cor} we saw that if $D_L$ is paired, triangulating
$\Pi_4$ with a paired diagonal, and two copies of $\Pi_5$ with single
oriented diagonals, always gives a cluster configuration.
Hence removing a single diagonal of a copy of $\Pi_5$
or a paired diagonal triangulating $\Pi_4$
can only be completed to a cluster configuration in two ways,
namely with diagonals being flip-complement of each other.
If $D_L$ is single, the claim follows from Lemma \ref{l1}. 
\end{proof}

Further instances of the mutation process in $\mathcal{C}_{E_6}$ 
can be described by flips of coloured oriented diagonals in $\Pi$, 
but not all mutations
allow a description of this type.
This is unsurprising, as for example 
not all mutations in the cluster algebra $\mathbb{C}[Gr_{3,7}]$
can be described through Pl\"ucker relations, see \cite{Scott}.

With Figure \ref{tilt} it is not hard to 
deduce all the remaining mutations occurring.
For example, the cluster configuration in (a) can be mutated to 
(b),(c),(e),(l),(m) and to the flip of 
a diagonal inside one light grey coloured heptagon.
Similarly, we have a list for the other families. 
Some instances of the more complicated geometric exchanges can be found 
on the upper pentagon of Figure \ref{Exchange graph}.

\subsection{Exchange graph}

In Figure \ref{Exchange graph} we display a part of the {\em exchange graph} 
of $\mathcal{C}_{E_6}$. For each heptagon appearing in the figure the numbering
of its vertices is as shown on the central heptagon. The vertices of the
graph correspond to cluster configurations, hence to cluster tilting sets
of $\mathcal{C}_{E_6}$, edges are 
drawn when two cluster configurations are related by a single mutation. 
In the two central heptagons of Figure \ref{Exchange graph}
the configuration of
$\mathcal{T}=\{ [5,3]_R, [5,2]_R ,[5,1]_P,[5,6]_P, [3,5]_B,[2,5]_B\}$ is displayed.
The $8$ neighbouring configurations 
are placed on the vertices of the two central pentagons 
sharing the vertex corresponding to $\mathcal{T}$.
These $8$ sets are obtained from $\mathcal{T}$
through repeated flips of single diagonals, as described in Proposition \ref{378}. 
The vertices of the left pentagon are obtained 
after mutating $[6,2]_P$ in $\mathcal{T}$. 

\subsection{Cluster tilted algebras}

Let $T=T_1\oplus\dots\oplus T_n$ be a cluster tilting object of $\mathcal{C}_{Q}$, then
 $\mathrm{End}_{\mathcal{C}_Q}(T)$ is the {\em cluster tilted algebra} of type $Q.$
The quiver $Q_T$ of $\mathrm{End}_{\mathcal{C}_Q}(T)$ has no loops nor 2-cycles and it 
encodes precisely the exchange matrix of the cluster associated to $T$, see \cite{BuMaRe} and \cite{CK}.
In $\mathcal{C}_{A_n}$ 
the quiver $Q_T$
can be read off from the triangulation $T$, see \cite{CCS}. The 
vertices of $Q_T$ are the diagonals of the triangulation and an arrow 
between $D_i$ and $D_j$ is drawn, 
whenever $D_i$ and $D_j$ bound a common triangle. 
The orientation of the arrow is $D_i\rightarrow D_j$, 
if $D_j$ is  linked to $D_i$
by an anticlockwise rotation around 
the common vertex. 

Extending the definition of $Q_T$ to the case at hand, the quivers 
corresponding to the cluster tilting sets of Figure \ref{Exchange graph} can be deduced. 
One could also read off the quivers, and relations, directly from $\Gamma$ 
by determining the spaces $\mathrm{Hom}_{\mathcal{C}_{E_6}}(T,T)$.

\begin{figure}
\includegraphics[scale=0.50, angle=90]{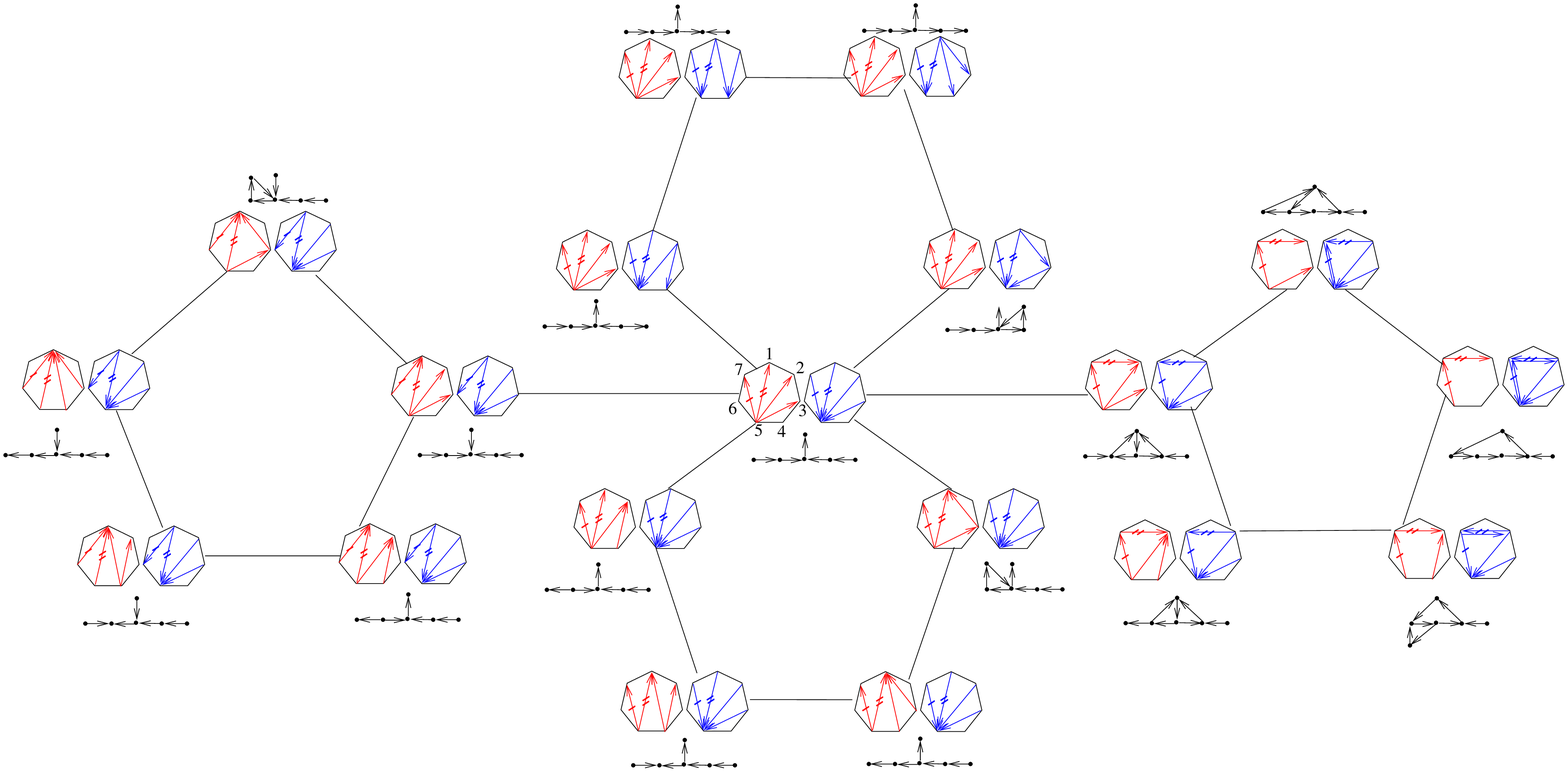}
\caption{ Part of the exchange graph for a cluster category of type $E_6$. 
At each vertex of the graph, the diagonals with the same labels are identified.}
\label{Exchange graph}
\end{figure}
\section{Applications and further directions}

\subsection{Symmetric cluster configurations and the cluster algebra of type ${F_4}$}

Cluster categories arising
from valued quivers have been first categorified algebraically in \cite{Demonet}.
The aim of this section is to use the geometric description of the cluster category
$\mathcal{C}_{E_6}$ to categorify geometrically the cluster
algebra of type $F_4$.

Consider again the map $\rho$ given by a simultaneous 
change of colour and orientation
of coloured oriented diagonals of $\Pi$. 
\begin{de}
A cluster configuration $\mathcal{T}$ in $\Pi$ is {\em $\rho$-symmetric}
if $\mathcal{T}=\rho(\mathcal{T})$. 
\end{de}
Since we know all cluster configurations of $\Pi$, see Theorem \ref{count}, we can deduce that there
are only three types of $\rho$-symmetric cluster configurations in $\Pi$. First, cluster configurations of $\Pi$
projecting to triangulations of $\Pi$ consisting of unoriented arcs through 
$\mathcal{C}_{E_6}\rightarrow \mathcal{C}_{A_4}$. These arise from $\mathcal{F}_1$ and
from $\tau$-shifts of the configurations in Figure \ref{tilt}(d).
Second, $\tau$-shifts of both, the cluster configuration in 
Figure \ref{tilt}(g) and the $\sigma$-symmetric configuration of Figure \ref{tilt}(g').
Third,  $\tau$-shifts of the configuration in Figure \ref{tilt}(h). 
We call the first $\rho$-symmetric cluster configurations of type \textbf{T},
the second of type \textbf{C} and the third of type \textbf{L}.
Moreover, we refer to the double short diagonals in a configuration of type \textbf{C}
as middle diagonals. 

In Figure \ref{confcl} configurations of type \textbf{L} and \textbf{C} are illustrated.
The blue diagonals have opposite orientation with respect to the ones shown in
the figure and are omitted. The
two $\rho$-symmetric cluster configurations of type \textbf{C} on the right side of the figure
are related through the action of $\sigma_6$, already defined in Section \ref{sigma}.
\begin{figure}
\includegraphics[scale=0.5]{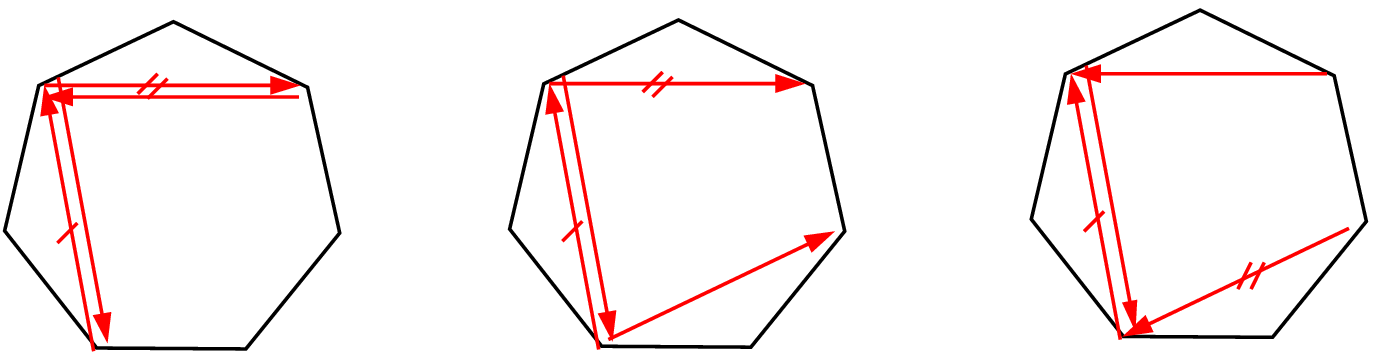}
\caption{$\rho$-symmetric cluster configurations of type \textbf{L} and \textbf{C}.}
\label{confcl}
\end{figure}
Notice that for $\rho$-symmetric cluster configurations
the quiver $Q_T$ of $\mathrm{End}_{\mathcal{C}_{E_6}}(T)$
is symmetric. 

In the next result we show
that mutations of cluster tilting objects in 
$\mathcal{C}_{E_6}$ preserves the $\rho$-symmetry of the cluster configuration.
 
Let $\mathcal{T}_{\rho}$  be a $\rho$-symmetric cluster configuration of 
$\Pi$ and denote by $D^*$ the unique complement in
$\mathcal{C}_{E_6}$ of $D$ in $\mathcal{T}_{\rho}$.
Then we observe that $\rho$-symmetric cluster configurations always have two 
$\rho$-orbits consisting of paired diagonals and two consist of
single diagonals of opposite colour and orientation, thus we can assume
$\mathcal{T}_{\rho}:=\{D_{P_1},D_{P_2},D_{S_1},\rho(D_{S_1}),D_{S_2},\rho(D_{S_2})\}$.
Keeping the notation as above, we have the following result.

\begin{prop}\label{mutrhosym} Let $1\leq k,l\leq 2$, then
$$\mathcal{T}_{\rho}\backslash D_{P_k}\cup D_{P_k}^* \textrm{ and } 
\mathcal{T}_{\rho}\backslash \{D_{S_l},\rho(D_{S_l})\} \cup \{D_{S_l}^*,\rho(D^*_{S_l})\}$$
are $\rho$-symmetric cluster configurations of $\Pi$.
\end{prop}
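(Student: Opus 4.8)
The plan is to deduce both statements from the single fact that $\rho$ is an automorphism of the translation quiver $\Gamma^{7}_{1,2,2}$ (Section~\ref{rhotau}) commuting with $\tau$ and $\Sigma$ (Section~\ref{action}), hence a $k$-linear autoequivalence of $\mathcal{C}_{E_6}$ preserving $\mathrm{Ext}^{1}$, and therefore sending cluster configurations to cluster configurations and complements to complements. Concretely, if $F$ is such an autoequivalence, $\mathcal{S}$ a cluster configuration and $D\in\mathcal{S}$, then by Theorem~\ref{thmbmrrt} the complement $D^{*}$ is characterised by the almost complete object $\mathcal{S}\setminus D$ it shares with $\mathcal{S}$, so $F(D^{*})$ is the complement of $F(D)$ in $F(\mathcal{S})$; in short, $F$ commutes with mutation. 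Recall also that $\rho$ fixes every paired diagonal and interchanges $[i,j]_{R}$ with $[j,i]_{B}$, so on $\mathcal{T}_{\rho}=\{D_{P_{1}},D_{P_{2}},D_{S_{1}},\rho(D_{S_{1}}),D_{S_{2}},\rho(D_{S_{2}})\}$ it fixes $D_{P_{1}},D_{P_{2}}$ and interchanges $D_{S_{l}}$ with $\rho(D_{S_{l}})\neq D_{S_{l}}$ for $l=1,2$.

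For a paired diagonal the statement is then immediate: since $\rho(D_{P_{k}})=D_{P_{k}}$ and $\rho(\mathcal{T}_{\rho})=\mathcal{T}_{\rho}$, compatibility of $\rho$ with mutation gives $\rho\bigl(\mathcal{T}_{\rho}\setminus D_{P_{k}}\cup D_{P_{k}}^{*}\bigr)=\mu_{\rho(D_{P_{k}})}(\rho(\mathcal{T}_{\rho}))=\mathcal{T}_{\rho}\setminus D_{P_{k}}\cup D_{P_{k}}^{*}$, so this configuration is $\rho$-symmetric; one also reads off $\rho(D_{P_{k}}^{*})=D_{P_{k}}^{*}$, so $D_{P_{k}}^{*}$ is again paired.

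For a single diagonal the key step I would prove first is that $D_{S_{l}}$ and $\rho(D_{S_{l}})$ are \emph{not} joined by an arrow in the cluster tilted quiver $Q_{\mathcal{T}_{\rho}}$ of $\mathrm{End}_{\mathcal{C}_{E_{6}}}\!\bigl(\bigoplus_{D\in\mathcal{T}_{\rho}}D\bigr)$. Indeed $\rho$ induces an isomorphism $\mathrm{End}_{\mathcal{C}_{E_{6}}}\!\bigl(\bigoplus_{D\in\mathcal{T}_{\rho}}D\bigr)\xrightarrow{\sim}\mathrm{End}_{\mathcal{C}_{E_{6}}}\!\bigl(\bigoplus_{D\in\mathcal{T}_{\rho}}\rho(D)\bigr)=\mathrm{End}_{\mathcal{C}_{E_{6}}}\!\bigl(\bigoplus_{D\in\mathcal{T}_{\rho}}D\bigr)$ realising the permutation $D\mapsto\rho(D)$ of the vertices of $Q_{\mathcal{T}_{\rho}}$; an arrow $D_{S_{l}}\to\rho(D_{S_{l}})$ would thus be carried to the opposite arrow $\rho(D_{S_{l}})\to D_{S_{l}}$, producing a $2$-cycle, which is excluded for cluster tilted quivers. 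Non-adjacency in hand, mutation at $D_{S_{l}}$ leaves the arrows at $\rho(D_{S_{l}})$, hence its exchange triangle and its complement, unchanged, and symmetrically; so $\mu_{D_{S_{l}}}$ and $\mu_{\rho(D_{S_{l}})}$ commute and their composite carries $\mathcal{T}_{\rho}$ to $\mathcal{T}_{\rho}\setminus\{D_{S_{l}},\rho(D_{S_{l}})\}\cup\{D_{S_{l}}^{*},\rho(D_{S_{l}})^{*}\}$, with both complements computed inside $\mathcal{T}_{\rho}$. Applying $\rho$ as above gives $\rho(D_{S_{l}})^{*}=\rho(D_{S_{l}}^{*})$; since this composite of two mutations is again a cluster configuration with six indecomposable summands, necessarily $D_{S_{l}}^{*}\notin\mathcal{T}_{\rho}$ and $D_{S_{l}}^{*}\neq\rho(D_{S_{l}}^{*})$ (so $D_{S_{l}}^{*}$ is single), and the resulting set is exactly $\mathcal{T}_{\rho}\setminus\{D_{S_{l}},\rho(D_{S_{l}})\}\cup\{D_{S_{l}}^{*},\rho(D_{S_{l}}^{*})\}$, which is visibly $\rho$-invariant.

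The main obstacle is the non-adjacency lemma together with the standard cluster-theoretic fact that mutating at one of two non-adjacent summands does not alter the complement at the other (see \cite{BMRRT}, \cite{BuMaRe}, \cite{CK}); equivalently, one must exclude that the rank-two exchange graph of the cluster configurations containing the rigid object $\mathcal{T}_{\rho}\setminus\{D_{S_{l}},\rho(D_{S_{l}})\}$ is an odd cycle, which is precisely what the absence of $2$-cycles in $Q_{\mathcal{T}_{\rho}}$ rules out. A more computational alternative, bypassing this, is to use the classification of $\rho$-symmetric cluster configurations into the types $\mathbf{T}$, $\mathbf{C}$, $\mathbf{L}$ and to read the two mutations directly off the $\mathrm{Ext}$-hammock pictures of Figure~\ref{Hammocks}, as in the proofs of Lemma~\ref{cor} and Lemma~\ref{l1}.
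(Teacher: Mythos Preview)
Your proof is correct and considerably more rigorous than the paper's own argument, which is the two-sentence assertion ``The claim follows from the mutation rule of $\mathcal{C}_{E_6}$ and the symmetry of the configurations. Moreover, since there are always two paired diagonals and two $\rho$-orbits of single diagonals in $\mathcal{T}_{\rho}$ we deduce that paired diagonals are exchanged with paired diagonals, and single diagonals with single diagonals.'' Both arguments rest on the same idea: $\rho$ is an autoequivalence of $\mathcal{C}_{E_6}$, so it sends complements to complements. For the paired case this is already the full proof, and here you and the paper coincide.

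For the single-diagonal case your proof genuinely adds content. The paper's sentence does not explain why, after replacing \emph{both} $D_{S_l}$ and $\rho(D_{S_l})$ by their respective complements in $\mathcal{T}_{\rho}$, one still obtains a cluster configuration; in particular it does not address why $\mathrm{Ext}^{1}(D_{S_l}^{*},\rho(D_{S_l}^{*}))=0$. Your non-adjacency lemma fills exactly this gap: the absence of $2$-cycles in $Q_{\mathcal{T}_{\rho}}$ (from \cite{BuMaRe}) together with the $\rho$-symmetry of $Q_{\mathcal{T}_{\rho}}$ forces $D_{S_l}$ and $\rho(D_{S_l})$ to be non-adjacent, whence mutations at them commute and the double mutation lands on a genuine cluster configuration. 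This is a clean, orientation-free argument that would work verbatim for any $\rho$-symmetric cluster tilting object in any simply laced cluster category. The alternative you mention at the end---checking the three types $\mathbf{T}$, $\mathbf{C}$, $\mathbf{L}$ by hand via Figures~\ref{Hammocks} and~\ref{tilt}---is probably closer to what the paper has in mind, and is what is implicitly used when the explicit moves $\mathbf{L}$--$\mathbf{C}$, $\mathbf{C}$--$\mathbf{T}$, $\mathbf{T}$--$\mathbf{T}$ are written down immediately afterwards; your structural argument has the advantage of not depending on that classification.
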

\begin{proof}
The claim follows from the mutation rule of $\mathcal{C}_{E_6}$ and the 
symmetry of the configurations. Moreover, since there are always two paired diagonals
and two $\rho$-orbits of single diagonals in $\mathcal{T}_{\rho}$ we deduce that
paired diagonals are exchanged with paired diagonals, and 
single diagonals with single diagonals.
\end{proof}
We call the $\rho$-symmetric cluster 
configuration obtained from $\mathcal{T}_{\rho}$ of Proposition \ref{mutrhosym}
the {\em mutation of $\mathcal{T}_{\rho}$ at
$D_{P_k}$, resp. at $\{D_{S_l},\rho(D_{S_l})\}$}, $1\leq k,l\leq 2$.
We refer to it by $\mathcal{T}_{\rho}^*$.
 
Let $\mathcal{A}_{F_4}$ be the cluster algebra
associated to a root system of type $F_4$. 
Then we are able to prove the claimed result.

\begin{prop}
There is a bijection 
$$
\{\rho\textrm{-symmetric cluster configurations in } \Pi\}\rightarrow \{\textrm{clusters in }\mathcal{A}_{F_4}\}
$$
compatible with mutations.
\end{prop}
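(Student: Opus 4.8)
The approach is to realise this bijection through \emph{folding}: the root system $F_4$ is the folding of $E_6$ by its order-two Dynkin automorphism, and $\rho$-symmetric data in the heptagon model should descend to $F_4$-data. The first step is to identify $\rho$. Under the equivalence $\mathcal{C}^7_{1,2,2}\xrightarrow{\sim}\mathcal{C}_{E_6}$ of Theorem \ref{eqofcat}, the map $\rho$ of $\Pi_{1,2,2}$ is induced by the graph automorphism of $T_{1,2,2}$ swapping its two legs of length $2$, that is, by the non-trivial Dynkin automorphism of $E_6$. Hence a $\rho$-symmetric cluster configuration corresponds to a $\rho$-invariant cluster tilting object of $\mathcal{C}_{E_6}$, and, via \cite{BMRRT}, to a $\rho$-invariant seed of the cluster pattern of type $E_6$. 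Proposition \ref{mutrhosym} already tells us that such a configuration $\mathcal{T}_\rho=\{D_{P_1},D_{P_2},D_{S_1},\rho(D_{S_1}),D_{S_2},\rho(D_{S_2})\}$ has exactly four mutation directions preserving $\rho$-symmetry — at $D_{P_1}$, at $D_{P_2}$, and at the two $\rho$-orbits $\{D_{S_l},\rho(D_{S_l})\}$ — matching $\operatorname{rk} F_4=4$.

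Next I would construct the map $\Phi$. To $\mathcal{T}_\rho$ I attach the valued quiver $\overline{Q}_{\mathcal{T}_\rho}$ obtained from the cluster tilted quiver $Q_{\mathcal{T}_\rho}$ of $\mathrm{End}_{\mathcal{C}_{E_6}}(\bigoplus_{D\in\mathcal{T}_\rho}D)$ — which is $\rho$-symmetric and free of $2$-cycles, hence has no arrow inside a $\rho$-orbit — by collapsing each $\rho$-orbit to a single vertex and recording multiplicities as a valuation, and I set $\Phi(\mathcal{T}_\rho)$ to be the cluster of $\mathcal{A}_{F_4}$ carried by $\overline{Q}_{\mathcal{T}_\rho}$ (equivalently, the image of $\mathcal{T}_\rho$ under the folding specialisation of cluster variables). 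For this to be well defined one must verify that $\overline{Q}_{\mathcal{T}_\rho}$ is always a seed of $\mathcal{A}_{F_4}$, i.e. mutation-equivalent to the valued $F_4$-quiver. Since $\tau$ acts by auto-equivalences, $\tau$-shifts leave $\overline{Q}_{\mathcal{T}_\rho}$ unchanged up to isomorphism, so by the classification of $\rho$-symmetric configurations into types $\mathbf{T}$, $\mathbf{C}$ and $\mathbf{L}$ it is enough to check this on one representative of each type: for the type-$\mathbf{T}$ configurations coming from $\mathcal{F}_1$ one reads the folded quiver directly off the triangulations of $\Pi_5$ and $\Pi_4$, and for the type-$\mathbf{C}$ and type-$\mathbf{L}$ representatives of Figure \ref{tilt}(g),(g'),(h) one computes $Q_T$ from $\Gamma$ and folds.

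Finally I would show that $\Phi$ is a mutation-compatible bijection. Mutation of $\mathcal{T}_\rho$ at the $\rho$-fixed summand $D_{P_k}$ commutes with collapsing $\rho$-orbits, while mutation at a free orbit $\{D_{S_l},\rho(D_{S_l})\}$ is the composite $\mu_{D_{S_l}}\mu_{\rho(D_{S_l})}$ of two commuting mutations (the two vertices being non-adjacent in $Q_{\mathcal{T}_\rho}$), which descends to a single mutation of $\overline{Q}_{\mathcal{T}_\rho}$; this is the standard compatibility of quiver mutation with admissible folding, and together with Proposition \ref{mutrhosym} it shows that $\Phi$ carries the four mutations of a $\rho$-symmetric configuration bijectively onto the four mutations of the associated $F_4$-seed. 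The map $\Phi$ is injective, because distinct $\rho$-orbits of cluster variables of $\mathcal{C}_{E_6}$ specialise to distinct cluster variables of $\mathcal{A}_{F_4}$, so $\mathcal{T}_\rho$ is recovered from $\Phi(\mathcal{T}_\rho)$; and it is surjective, because Theorem \ref{count} together with the decomposition into types $\mathbf{T}$, $\mathbf{C}$, $\mathbf{L}$ shows that there are exactly $105$ $\rho$-symmetric cluster configurations of $\Pi$, which is the number of clusters of $\mathcal{A}_{F_4}$. The main obstacle is precisely the well-definedness of $\Phi$: excluding the possibility that some $\overline{Q}_{\mathcal{T}_\rho}$ is a symmetric rank-$4$ quiver not of type $F_4$. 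Since admissible folding commutes with the four mutations above, this is a finite verification once one knows — as given by Theorem \ref{count} — that the $\rho$-symmetric configurations are exhausted by $\mathcal{F}_1$ together with the $\tau$-orbits of Figure \ref{tilt}(d),(g),(g'),(h); keeping track of the colour and orientation changes that $\tau$ induces on the first slice of $\Gamma$ is the most delicate bookkeeping.
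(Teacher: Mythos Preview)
Your approach is correct, but it is genuinely different from the paper's and considerably more structural. You build the map $\Phi$ explicitly via folding: collapse $\rho$-orbits in the cluster-tilted quiver $Q_{\mathcal{T}_\rho}$ to a valued quiver, check on representatives of each type $\mathbf{T}$, $\mathbf{C}$, $\mathbf{L}$ that the result lies in the $F_4$ mutation class, and then invoke the standard compatibility of admissible folding with mutation (orbit mutations $\mu_{D_{S_l}}\mu_{\rho(D_{S_l})}$ descending to a single $F_4$ mutation). This explains \emph{why} the bijection exists and would transport to other folding situations, at the cost of the verifications and bookkeeping you yourself identify as the main obstacle.

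The paper bypasses all of this. Its proof is essentially a counting argument: it notes that the $\rho$-orbits of vertices of $\Gamma$ biject with the $24$ cluster variables of $\mathcal{A}_{F_4}$; it counts the $\rho$-symmetric cluster configurations directly from Theorem~\ref{count} ($84$ of type $\mathbf{T}$, $14$ of type $\mathbf{C}$, $7$ of type $\mathbf{L}$, total $105$) and matches this against the known number of $F_4$ clusters from \cite[Prop.~3.8]{FZ_y}; and for mutation compatibility it observes that Proposition~\ref{mutrhosym} already gives a well-defined mutation rule with four directions, while the exchange graph of $\mathcal{A}_{F_4}$ is $4$-regular by \cite[Thm.~1.15]{FZ_y}, so the two rules are forced to coincide. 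No folded quivers are ever computed. The paper's argument is shorter and avoids your delicate bookkeeping, but it certifies the bijection by numerics and regularity rather than by a mechanism; your argument constructs it.
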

\begin{proof}
Clearly there is a bijection between the $\rho$-orbits of 
coloured oriented diagonals in $\Gamma$ and the 24
cluster variables of  $\mathcal{A}_{F_4}$. 

Proceeding as in the proof of Theorem \ref{count}
we deduce that there are 105 $\rho$-symmetric cluster configurations in $\Pi$: 
84 are of type \textbf{T}, 14 of type \textbf{C} and 7 of type \textbf{L}.
From  \cite[Prop. 3.8]{FZ_y} we know that this number coincides with the 
number of clusters in the cluster algebra of type $F_4$, $\mathcal{A}_{F_4}$.

In addition,
the mutation rule in $\mathcal{C}_{E_6}$ induces a unique mutation rule
for $\rho$-symmetric cluster configurations of $\Pi$. This can simply be seen 
performing the mutation case by case. 
The mutation in $\mathcal{C}_{E_6}$ on $\rho$-symmetric cluster configurations
then agrees with the mutation of the cluster algebra  $\mathcal{A}_{F_4}$, 
since the exchange graph associated to $\mathcal{A}_{F_4}$ is regular
of degree 4, see  \cite[Thm 1.15]{FZ_y}. Thus there is essentially just one possible 
mutation.
\end{proof}
The 84 $\rho$-symmetric cluster configurations of type \textbf{T} in $\mathcal{C}_{E_6}$
are in 2:1 correspondence with the triangulations of $\Pi$, thus
with the cluster tilting sets of the cluster
category of type $\mathcal{C}_{A_4}$.

Geometrically the mutations of $\rho$-symmetric cluster 
configurations in $\Pi$ are described by the following
three moves. Let $c\in\{R,B\}$ and let $i$ a vertex of $\Pi$. 
We call a triangle bounded by coloured oriented
diagonals in $\Pi$ {\em internal} if all its edges are different then boundary edges.

\begin{description}
\item[L-C] In a configuration of type \textbf{L} the diagonals $\{[i+3,i+1]_c,\rho([i+3,i+1]_c)\}$  
exchange with $\{[i-1,i-3]_c,\rho([i-1,i-3]_c)\}$ in a configuration of type \textbf{C}. 
Similarly $[i+1,i+3]_P\leftrightarrow[i-3,i-1]_P$.\\
\item[C-T] In a configuration of type \textbf{C} the middle diagonals $\{[i+2,i]_c,\rho([i+2,i]_c)\}$  
exchange with $\{[i,i-3]_c,\rho([i,i-3]_c)\}$ bounding an internal triangle
in a configuration of type \textbf{T}. Similarly 
$[i-2,i]_P\leftrightarrow[i,i+3]_P$.\\
\item[T-T] All diagonals in a configuration of type \textbf{T} not 
bounding an internal triangle,
are exchanged with the usual flip rule. 
The orientation of the 
new diagonal is uniquely determined by the type of diagonal one exchanges. More precisely, the orientation
is such that paired diagonals are exchanges with paired diagonals, and single with single.  
\end{description}
\begin{figure}[H]
\centering
\psfragscanon
\psfrag{r}[][][0.75]{L-C}
\psfrag{b}[][][0.75]{C-T}
\begin{subfigure}[b]{0.25\textwidth}
\includegraphics[width=\textwidth]{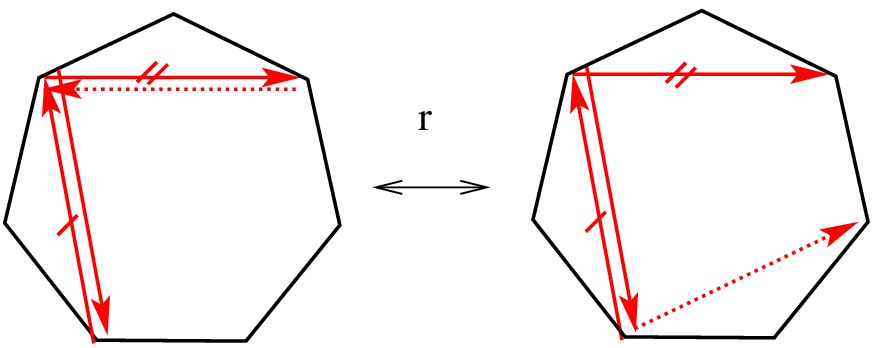}
\end{subfigure}\qquad\qquad 
\begin{subfigure}[b]{0.25\textwidth}
\includegraphics[width=\textwidth]{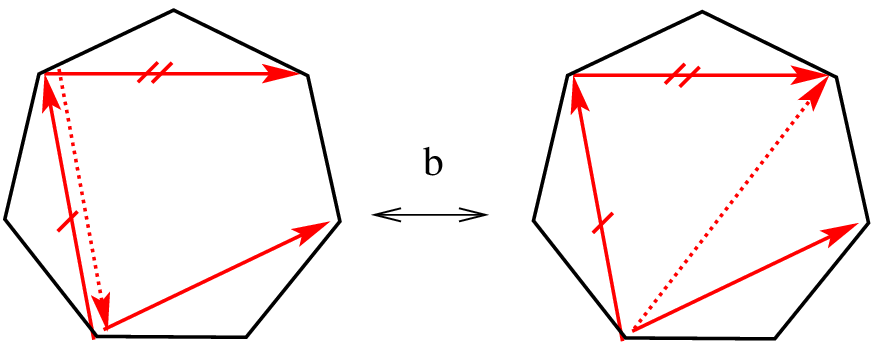}
\end{subfigure}
\caption{Mutations of $\rho$-symmetric cluster tilting configurations.}\label{L-C}
\end{figure}

In Figure \ref{L-C} we illustrate a mutation
between a $\rho$-symmetric cluster configuration of type \textbf{L}
and of type \textbf{C}, as well as a mutation between
a configuration of type \textbf{C} and type \textbf{T}.
The diagonals in dotted 
lines are complements of each other and 
as before, paired diagonals are labelled.

\subsection{Cluster tilting sets in $\mathcal{C}_{T_{r,s,t}}$}\label{end}

The construction of $\Gamma_{r,s,t}^+$ and $\Gamma_{r,s,t}^-$ of Section \ref{stabtrans}
was motivated by the following idea: glue
two copies of the AR-quiver of $\mathcal{C}^2_{A_{r+t+1}}$ 
along two disjoint $\tau$-orbits. This resulted in pairing diagonals (as in Section \ref{pairs}).
In this context one can ask.
\begin{problem}
How do categorical properties of the original category behave under this gluing operation?
How do cluster tilting sets behave under this operation?
\end{problem}

The  cluster categories $\mathcal{C}_{E_7}$ and $\mathcal{C}_{E_7}$  have finitely
many cluster tilting set and they can be described 
as cluster configurations of coloured oriented single and paired
diagonals inside a 10-gon, resp. a 16-gon, in a similar way as we did in Section \ref{tilte6}.
From these configurations we can identify again a family (denoted by $\mathcal{F}_1$ previously)
of cluster configurations giving rise to triangulations of regions homotopic to
a heptagon and a octagon, resp. a heptagon and a nonagon, compare with
Section \ref{longpaired} and Lemma \ref{cor}. 
In Figure \ref{Fig7} a cluster tilting set of $\mathcal{C}_{E_7}$  
arising from a projection of the projective modules
in $\mathrm{mod}k E_7$ is represented. Similarly, in Figure \ref{Fig8}
a cluster tilting set of $\mathcal{C}_{E_8}$ is represented.

\begin{figure}[H]
\centering
\psfragscanon
 \psfrag{1}[][][0.45]{$i$}
  \psfrag{2}[][][0.45]{$i+1$}
  \psfrag{3}[][][0.45]{$i+2$}
  \psfrag{4}[][][0.45]{$i+3$}
  \psfrag{5}[][][0.45]{$i+4$}
  \psfrag{6}[][][0.45]{$i+5$}
  \psfrag{7}[][][0.45]{$i+6$}
   \psfrag{8}[][][0.45]{$i+7$}
   \psfrag{9}[][][0.45]{$i+8$}
\begin{subfigure}[b]{0.25\textwidth}
\includegraphics[width=\textwidth]{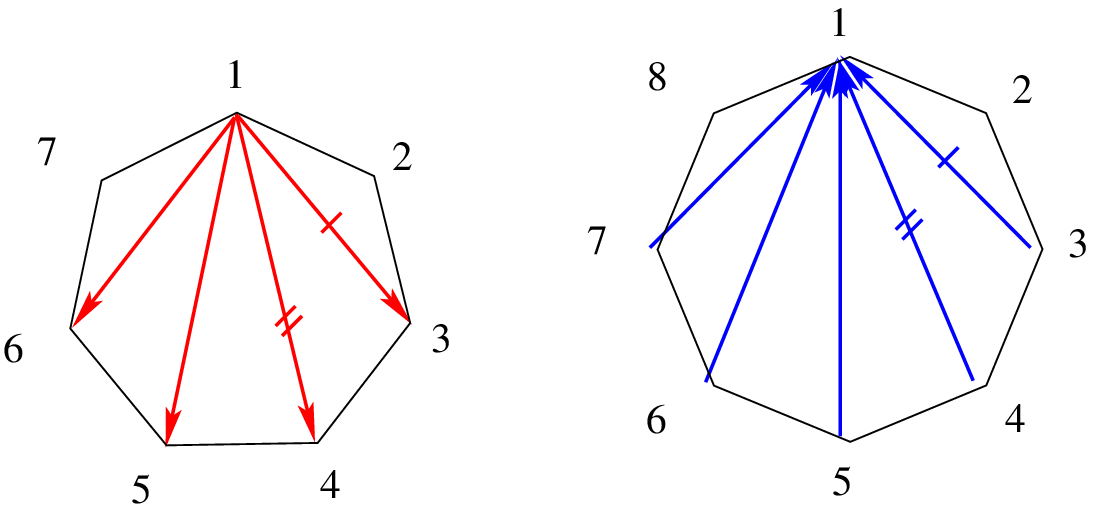}
\end{subfigure}
\caption{A cluster tilting set
of $\mathcal{C}_{E_7}$.}
\label{Fig7}
\begin{subfigure}[b]{0.25\textwidth}
\includegraphics[width=\textwidth]{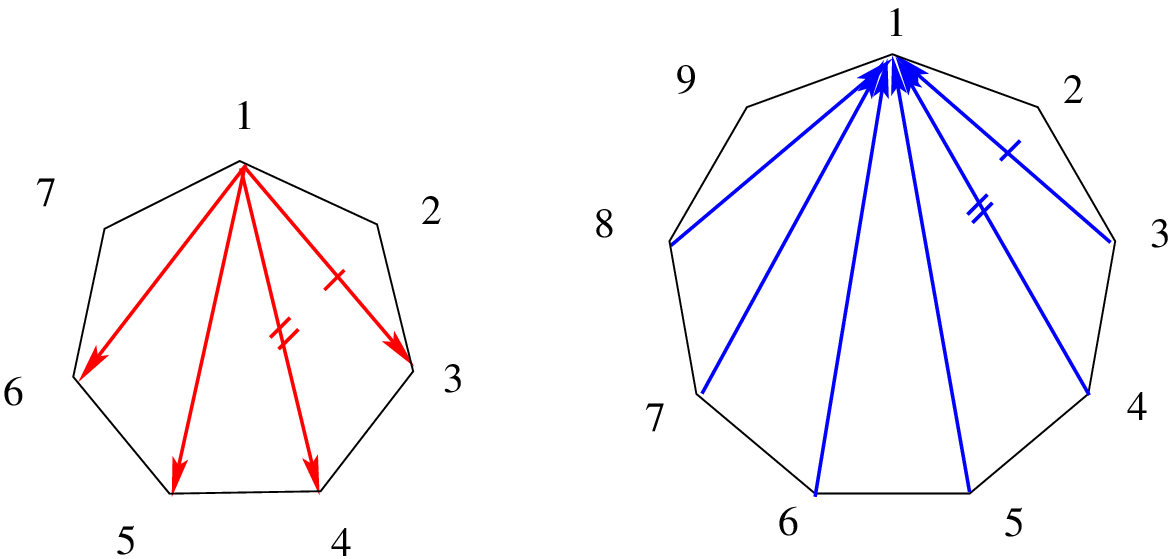}
\end{subfigure}
\caption{A cluster tilting set of $\mathcal{C}_{E_8}$.}
\label{Fig8}
\end{figure}

When $T_{r,s,t}$ is not of Dynkin type, $\mathcal{C}_{T_{r,s,t}}$  
has infinitely many cluster tilting sets. With our geometric approach, a number of 
cluster tilting sets of $\mathcal{C}_{T_{r,s,t}}$
can be expressed as 
configurations of coloured 
oriented single and paired 
diagonals inside the $(n+3)$-gon $\Pi$, where 
$n\geq\max\{r+t+1,r+s+1\}$.
Moreover, if one considers the projections
$\mathbb Z T_{r,s,t}\rightarrow\mathbb Z A_{r+t+1}$ and  
$\mathbb Z T_{r,s,t}\rightarrow\mathbb Z A_{s+t+1}$
one can describe the $\mathrm{Ext}$-hammocks in $\mathbb Z T_{r,s,t}$ using
the $\mathrm{Ext}$-hammocks in $\mathbb Z A_{r+t+1}$, resp. $\mathbb Z A_{s+t+1}$,
as we did in Subsection \ref{lift}. This enable us to express a number
of cluster configurations of $\mathcal{C}_{T_{r,s,t}}$
as triangulations of a $(s+t+4)$- and a $(r+t+4)$-gon. 

Finally, since in type $A$
all quivers obtained through Fomin-Zelevinsky quiver
mutations are known, we expect that our 
model can be used to understand the
quivers in the mutation class of an orientation of $T_{r,s,t}$, 
as well as 
parts of the exchange graph of 
$\mathcal{C}_{T_{r,s,t}}$. 

\subsection{Almost positive roots}
Consider the set of almost positive roots associated to
a root system $\Phi$. This set, denoted by $\Phi_{\geq -1}$,
consists of all positive roots together with all 
negative simple roots of $\Phi$. 

For an initial choice of a triangulation of a regular $(t+3)$-gon, 
an explicit bijection between all (unoriented) diagonals of 
the polygon and the 
set $\Phi_{\geq -1}$ of type $A_t$ was given in \cite{CCS}. 

From Theorem \ref{eqofcat},
together with \cite[Prop. 4.1]{BMRRT}
it follows that
there is a bijection between the vertices of $\Gamma_{r,s,t}^\pm$ and the 
set of almost positive roots $\Phi_{\geq -1}$ 
of the root system of type $E_6$, $E_7$ and $E_8$.

\begin{problem}
Describe geometrically the bijection between the coloured 
oriented single and paired diagonals in a
7-,10-, resp.16-gon and
the almost positive roots  of 
the root system of type $E_6$, $E_7$, resp. $E_8$.
\end{problem}


\textbf{Acknowledgments: } 
I would like to thank Karin Baur, Giovanni Felder and Robert Marsh for all the 
inspiring discussions we had and for
the many very helpful comments. I would also like to 
thank an anonymous referee for helpful suggestions. 

The author was partially supported by the Swiss National 
Science Foundation Grant Number PDFMP2127430.


\begin{thebibliography}{BMRRT05}  

\bibitem[1]{bongartz}
{\sc K.~Bongartz}, {\em Critical simply connected algebras}, Manuscripta Math.,
  46 (1984), pp.~117--136.

\bibitem[2]{BZ}
{\sc T.~Br{\"u}stle and J.~Zhang}, {\em On the cluster category of a marked
  surface without punctures}, Algebra Number Theory, 5 (2011), pp.~529--566.

\bibitem[3]{BMRRT}
{\sc A.~B. Buan, R.~Marsh, M.~Reineke, I.~Reiten, and G.~Todorov}, {\em Tilting
  theory and cluster combinatorics}, Adv. Math., 204 (2006), pp.~572--618.

\bibitem[4]{BuMaRe}
{\sc A.~B. Buan, R.~J. Marsh, and I.~Reiten}, {\em Cluster mutation via quiver
  representations}, Comment. Math. Helv., 83 (2008), pp.~143--177.

\bibitem[5]{CCS}
{\sc P.~Caldero, F.~Chapoton, and R.~Schiffler}, {\em Quivers with relations
  arising from clusters ({$A_n$} case)}, Trans. Amer. Math. Soc., 358 (2006),
  pp.~1347--1364.

\bibitem[6]{CK}
{\sc P.~Caldero and B.~Keller}, {\em From triangulated categories to cluster
  algebras}, Invent. Math., 172 (2008), pp.~169--211.

\bibitem[7]{Demonet}
{\sc L.~Demonet}, {\em Categorification of skew-symmetrizable cluster algebras},
  Algebr. Represent. Theory 14, 6 (2011), 1087-1162.

\bibitem[8]{FP}
{\sc S.~{Fomin} and P.~{Pylyavskyy}}, {\em {Tensor graphs and cluster
  algebras}}, arXiv:1210.1888  (2012).

\bibitem[9]{FZ_y}
{\sc S.~Fomin and A.~Zelevinsky}, {\em {$Y$}-systems and generalized
  associahedra}, Ann. of Math. (2), 158 (2003), pp.~977--1018.

\bibitem[10]{Happel1}
{\sc D.~Happel}, {\em On the derived category of a finite-dimensional algebra},
  Comment. Math. Helv., 62 (1987), pp.~339--389.

\bibitem[11]{Happel2}
\leavevmode\vrule height 2pt depth -1.6pt width 23pt, {\em Triangulated
  categories in the representation theory of finite-dimensional algebras},
  vol.~119 of London Mathematical Society Lecture Note Series, Cambridge
  University Press, Cambridge, 1988.
  
\bibitem[12]{Jorgensen}
Peter J{\o}rgensen.
\newblock Quotients of cluster categories.
\newblock {\em Proc. Roy. Soc. Edinburgh Sect. A}, 140(1):65--81, 2010.

\bibitem[13]{Keller}
{\sc B.~Keller}, {\em On triangulated orbit categories}, Doc. Math., 10 (2005),
  pp.~551--581.

\bibitem[14]{K2}
\leavevmode\vrule height 2pt depth -1.6pt width 23pt, {\em Cluster algebras,
  quiver representations and triangulated categories}, in Triangulated
  categories, vol.~375 of London Math. Soc. Lecture Note Ser., Cambridge Univ.
  Press, Cambridge, 2010, pp.~76--160.

\bibitem[15]{Lisa2}
{\sc L.~Lamberti}, {\em {Repetitive higher cluster categories of type $A_n$}},
J. Algebra Appl. 13 (2014), no. 2, 1350091, 21 pp. 

\bibitem[16]{Lisa1}
\leavevmode\vrule height 2pt depth -1.6pt width 23pt, {\em A geometric
  interpretation of the triangulated structure of m-cluster categories},
  Communications in Algebra, Volume 42, Issue 3, March 2014, pages 962-983.

\bibitem[17]{Miyachi}
{\sc J.-i. Miyachi and A.~Yekutieli}, {\em Derived {P}icard groups of
  finite-dimensional hereditary algebras}, Compositio Math., 129 (2001),
  pp.~341--368.

\bibitem[18]{Riedtmann}
{\sc C.~Riedtmann}, {\em Algebren, {D}arstellungsk\"ocher, \"{U}berlagerungen
  und zur\"uck}, Comment. Math. Helv., 55 (1980), pp.~199--224.

\bibitem[19]{S}
{\sc R.~Schiffler}, {\em A geometric model for cluster categories of type
  {$D_n$}}, J. Algebraic Combin., 27 (2008), pp.~1--21.

\bibitem[20]{Scott}
{\sc J.~S. Scott}, {\em Grassmannians and cluster algebras}, Proc. London Math.
  Soc. (3), 92 (2006), pp.~345--380.

\bibitem[21]{to}
{\sc H.~A. Torkildsen}, {\em A geometric realization of the m-cluster category
  of type $\tilde{A}$}, arXiv:1208.2138 (2012).

\bibitem[22]{Zhu}
{\sc B.~Zhu}, {\em Cluster-tilted algebras and their intermediate coverings},
  Comm. Algebra, 39 (2011), pp.~2437--2448.
\end{thebibliography}
\end{document}